\newcommand{\cyclic}{\mathop{\kern0.9ex{{+}\kern-2.10ex\raise-0.20
      ex\hbox{\Large\hbox{$\circlearrowright$}}}}\limits}
\newcommand{\acts}{\mbox{\,\raisebox{0.26ex}{\tiny{$\bullet$}}\,}}
\newtheoremstyle{daniel}{3.0mm}{2mm}{\itshape}{}{\bfseries}{.}{1.5mm}{}
\theoremstyle{daniel}
\newtheorem{thm}{Theorem}[section]
\newtheorem{prop}[thm]{Proposition}
\newtheorem{Defi}[thm]{Definition}
\newtheorem{lemma}[thm]{Lemma}
\newtheorem{cor}[thm]{Corollary}
\newtheorem{Exs}[thm]{Examples}
\newtheorem{Rems}[thm]{Remarks}
\newtheorem*{thm*}{Theorem}
\newtheorem*{cor*}{Corollary}
\newtheorem*{thm3.6}{Theorem 3.6}
\newtheorem*{thm4.3}{Theorem 4.3}
\newtheorem*{prop*}{Proposition}
\newtheorem*{Notation}{Notation}
\newtheorem{Prop}[thm]{Proposition}
\newtheorem{Rem}[thm]{Remark}
\newtheorem{Ex}[thm]{Example}
\newtheorem*{Setup}{Setup}
\newenvironment{rem}   {\begin{Rem}\em}{\end{Rem}}
\newenvironment{defi}  {\begin{Defi}\em}{\end{Defi}}
\def\cC{\mathcal{C}}
\def\cD{\mathcal D }
\def\cE{\mathcal E}
\def\cF{\mathcal F}
\def\cH{\mathcal H}
\def\cM{\mathcal M }
\def\cL{\mathcal L}
\def\cO{\mathcal O}
\def\cU{\mathcal U}
\def\cW{\mathcal W}
\newcommand{\cX}{\mathcal{X}}
\def\cY{\mathcal Y}
\def\cZ{\mathcal Z}
\def\mg{\mathfrak{m}}
\newcommand{\C}{\mathbb{C}}
\newcommand{\N}{\mathbb{N}}
\newcommand{\R}{\mathbb{R}}
\DeclareMathOperator{\Aut}{Aut}
\DeclareMathOperator{\Coh}{Coh}
\DeclareMathOperator{\coker}{Coker}
\DeclareMathOperator{\id}{id}
\DeclareMathOperator{\im}{Im}
\DeclareMathOperator{\End}{End}
\DeclareMathOperator{\Ext}{Ext}
\DeclareMathOperator{\E}{E^1}
\DeclareMathOperator{\GL}{GL}
\DeclareMathOperator{\Hom}{Hom}
\DeclareMathOperator{\Ker}{Ker}
\DeclareMathOperator{\Pic}{Pic}
\DeclareMathOperator{\Quot}{Quot}
\DeclareMathOperator{\rank}{rank }
\DeclareMathOperator{\Spec}{Spec}
\DeclareMathOperator{\Todd}{Todd}
\DeclareMathOperator{\ch}{ch}
\def\Def{\mbox{Def}}
\def\hq{\hspace{-0.5mm}/\hspace{-0.14cm}/ \hspace{-0.5mm}}
\DeclareMathOperator{\Id}{Id}
\numberwithin{equation}{section}
\begin{document}
\title[Moduli spaces for $\omega$-semistable sheaves]{Moduli spaces of sheaves that are semistable with respect to a K\"ahler polarisation}
\author{Daniel Greb}
\address{Daniel Greb\\Essener Seminar f\"ur Algebraische Geometrie und Arithmetik\\Fakult\"at f\"ur Ma\-the\-matik\\Universit\"at Duisburg--Essen\\
45112 Essen\\ Germany}
\email{daniel.greb@uni-due.de}
\urladdr{\href{http://www.esaga.uni-due.de/daniel.greb/}
{http://www.esaga.uni-due.de/daniel.greb/}}

\author{Matei Toma}
\address{Matei Toma, 
Universit\'e de Lorraine, CNRS, IECL, F-54000 Nancy, France}

\email{Matei.Toma@univ-lorraine.fr}
\urladdr{\href{http://www.iecl.univ-lorraine.fr/~Matei.Toma/}{http://www.iecl.univ-lorraine.fr/~Matei.Toma/}}

\date{\today}

\keywords{K\"ahler manifolds, moduli of coherent sheaves, algebraic stacks, good moduli spaces, semi-universal deformations, local quotient presentations}
\subjclass[2010]{32G13, 14D20, 14D23, 14J60}

\begin{abstract}
Using an existence criterion for good moduli spaces of Artin stacks by Alper--Fedorchuk--Smyth 
we construct a proper moduli space of rank two sheaves with fixed Chern classes on a given complex projective manifold that are Gieseker-Maruyama-semistable with respect to a fixed K\"ahler class. 
  \end{abstract}
\maketitle
\setcounter{tocdepth}{1}
\noindent

\section{Introduction}

Moduli spaces of sheaves of fixed topological type that are Gieseker-semistable with respect to a given ample class on a projective manifold $X$ have been studied for several decades. When one studies the way these moduli spaces vary if the polarisation changes, examples show that in dimension bigger than two one encounters sheaves $\mathcal{E}$ that are Gieseker-semistable with respect to non-rational, real ample classes $\alpha \in \mathrm{Amp}(X)_\mathbb{R}$ on $X$, i.e., that enjoy the property that for a K\"ahler form $\omega$ representing $\alpha$ and for every proper coherent subsheaf $\mathcal{F} \subset \mathcal{E}$ we have $p_{\mathcal{F}}(m) \leq p_\mathcal{E}(m)$ for all $m$ sufficiently large, where the reduced Hilbert polynomial $p_E(m)$ with respect to $\alpha = [\omega]$ is defined by
\[p_E(m) = \frac{1}{\rank(E)}\int_X ch(E) e^{m\omega} \Todd(X),\]see for example \cite{GRTsurvey}. When $\omega$ represents the first Chern class of an ample line bundle $L$, the Riemann--Roch theorem states that $p_{E}(m)$ equals $\frac{1}{\rank(E)} \chi(E\otimes L^m)$, and so the above generalises the notion of Gieseker--stability from integral classes to real classes, and in fact to all K\"ahler classes $[\omega]$. Both in the case of a real ample polarisation and of an arbitrary K\"ahler class on a compact K\"ahler manifold, the question arises whether there is a moduli space for such sheaves. In fact, it seems that the problem of constructing such moduli spaces was explicitly posed quite some time ago by Tyurin, see the discussion in \cite[Sect.~3.2]{TelemanCommContMath}.

When semistability is measured with respect to an ample line bundle, the construction of moduli spaces is based on Geometric Invariant Theory, and hence of global nature. Using the special structure of cones of positive classes and Geometric Invariant Theory for moduli spaces of quiver representations, it was shown by the authors in joint work \cite{GRT1} with Julius Ross that a GIT-construction of projective moduli spaces for $\omega$-semistable sheaves can still be carried out on projective threefolds.  When dealing with arbitrary compact K\"ahler manifolds it is however quite unlikely that a finite-dimensional, global construction of a moduli space is possible. As an alternative approach, it is natural to study the symmetries induced by automorphism groups on semi-universal deformation spaces and to carry out a functorial local construction from which in the end the moduli space is glued. This approach is most naturally pursued in the language of analytic/algebraic stacks. Using recent advances in this theory, both regarding the correct type of moduli space to construct \cite{Alper13} and regarding existence criteria \cite{AlperFS}, in this paper we establish the following main result:

\begin{thm*} The algebraic stack of $\omega$-semistable sheaves of rank two and given Chern classes admits a good moduli space that is a proper algebraic space; in particular, the moduli space is separated.
\end{thm*}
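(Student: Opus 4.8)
The plan is to realise the moduli problem as an algebraic stack, to verify the hypotheses of the existence criterion for good moduli spaces of \cite{AlperFS}, and then to establish properness separately by means of a valuative criterion. Write $\cM$ for the stack of rank two coherent sheaves on $X$ with the prescribed Chern classes that are $\omega$-semistable. Since $X$ is projective, the stack $\Coh$ of all coherent sheaves of this numerical type is algebraic and of finite type with affine diagonal, so the first task is to show that $\omega$-semistability is an open condition, making $\cM \subset \Coh$ an open substack of the same kind. The crucial input is boundedness of the family of $\omega$-semistable rank two sheaves with the given Chern classes, which I would take from the earlier part of the paper; granting this, only finitely many numerical types of potential destabilising subsheaves occur, and for each the destabilising locus is closed, so openness follows.

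Next I would produce, around every closed point of $\cM$, a local quotient presentation in the sense of \cite{AlperFS}. A closed point corresponds to an $\omega$-polystable sheaf $\cE \cong \bigoplus_i \cE_i^{\oplus m_i}$ with the $\cE_i$ stable of the common reduced Hilbert polynomial, so that $\Aut(\cE) \cong \prod_i \GL_{m_i}$ is linearly reductive (we are in characteristic zero). The semi-universal deformation space $\Def(\cE)$ carries a natural action of $\Aut(\cE)$; after algebraising this germ via Artin approximation one obtains an affine scheme with $\Aut(\cE)$-action whose associated quotient stack admits an \'etale morphism to $\cM$ realising $\cE$ with closed orbit. I would then check that this presentation is stabiliser preserving and sends closed points to closed points, the latter resting on the fact that the closed points of $\cM$ are exactly the $\omega$-polystable sheaves and that every $\omega$-semistable sheaf degenerates, through a Jordan--H\"older filtration, to a unique polystable associated graded. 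Together with reductivity of the stabilisers, these verifications supply the hypotheses of \cite{AlperFS} and yield a good moduli space $M$ of $\cM$ that is an algebraic space of finite type.

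It remains to prove that $M$ is proper, which I would do by checking the valuative criterion over an arbitrary discrete valuation ring $R$ with fraction field $K$. Separatedness amounts to the uniqueness statement that two families of $\omega$-semistable sheaves over $\Spec(R)$ agreeing over $\Spec(K)$ have $S$-equivalent, hence identically classified, special fibres, which follows from the uniqueness of the polystable limit furnished by the Jordan--H\"older mechanism. Universal closedness is the complementary existence statement: after a finite extension of $R$, any $\omega$-semistable sheaf over $\Spec(K)$ extends to a flat family over $\Spec(R)$ with $\omega$-semistable special fibre. This is a Langton-type semistable reduction theorem.

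The Langton-type theorem is where I expect the main difficulty to lie. Starting from an arbitrary flat extension whose special fibre may be unstable, one repeatedly performs elementary modifications along the maximal $\omega$-destabilising subsheaf and must show that the process terminates in a semistable sheaf. For an ample polarisation termination is guaranteed because the instability is measured by a discretely-valued invariant; for a transcendental K\"ahler class $\omega$ the relevant $\omega$-slopes are real numbers, so the naive monotonicity argument no longer forces the procedure to stop. The resolution is to invoke boundedness of the rank two destabilising subsheaves, so that only finitely many numerical types can occur, which restores discreteness of the controlling invariant and forces termination. The restriction to rank two is what keeps this analysis tractable, since the Harder--Narasimhan filtration then has length two and the destabilising subsheaves are of rank one.
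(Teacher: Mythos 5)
Your overall strategy --- realise the moduli problem as an algebraic stack, verify the Alper--Fedorchuk--Smyth criterion, and handle separatedness and properness by Langton-type valuative criteria --- is exactly the one the paper follows, but your account of the existence criterion omits half of its hypotheses, and that half is where much of the work lies. Theorem~1.2 of \cite{AlperFS} requires, in addition to local quotient presentations at closed points satisfying 2(a) and 2(b), that for \emph{every} $\mathbb{C}$-point $y$ the closed substack $\overline{\{y\}}$ admits a good moduli space. For a properly semistable sheaf $F$ sitting in an extension $0\to L_1\to F\to L_2\to 0$ this is not automatic: the paper handles it by introducing the flag stack $\mathcal{D}rap(X;L_1,L_2)$, proving it is proper (indeed finite and affine when $L_1\ncong L_2$) over the stack of semistable sheaves, identifying it with the quotient stack $[\Ext^1(L_2,L_1)/\mathbb{C}^*\times\mathbb{C}^*]$, and descending the good moduli space along this presentation; the case $L_1\cong L_2$ is instead reduced to the ample polarisation $h=c_1(\mathcal{O}_X(1))$, for which the Gieseker moduli space is already known. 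None of this appears in your proposal, and it cannot be waved away.

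A second gap is your claim that condition 2(a) rests simply on the identification of closed points with polystable sheaves together with uniqueness of polystable degenerations. What must actually be excluded is that a \emph{non}-polystable sheaf has a closed $\mathrm{Aut}(E)$-orbit in the slice $S$ even though its $G$-orbit in the Quot scheme is not closed. In the hardest case, $\mathrm{Aut}(E)\cong\mathbb{C}^*\times\mathbb{C}^*$, the paper rules this out by a weight analysis: the relevant piece of the slice embeds $\mathbb{C}^*$-equivariantly into the non-negative weight space $V_+\oplus V_0$ of a linear representation (because the tangent directions coming from extensions of $L_2$ by $L_1$ all carry weight $+1$), so every orbit of a non-fixed point fails to be closed. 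Your separatedness argument is similarly compressed: uniqueness of the polystable degeneration of a single sheaf inside the stack is not the statement needed; one must show that two sheaves on $X_A$ ($A$ a discrete valuation ring) agreeing over the generic fibre have special fibres with isomorphic Jordan--H\"older graduations, which the paper proves by running Langton's elementary-transformation calculus through the abelian category of semistable sheaves with fixed reduced Hilbert polynomial. Your description of the semistable-reduction step for properness, and of the role of boundedness in restoring discreteness of the controlling invariant for a transcendental class $\omega$, is accurate in spirit, though the paper delegates it to an external reference.
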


We emphasise that this in particular yields a new construction of the Gieseker-moduli space in the case where the polarisation is given by an ample line bundle. We do not expect the restriction to the rank two case to be necessary; here, it simplifies the analysis of the local slice models describing the action of the automorphism groups of stable sheaves on their semi-universal deformation space. Note however that the theorem stated above does not claim that the moduli space is projective  or even a scheme; new methods seem to be needed to investigate these additional questions.

While the approach followed here is very promising in the general K\"ahler case, both fundamental work extending \cite{AlperFS} to the analytic setup and a finer analysis of the geometry of the symmetries of semi-universal analytic deformation spaces will be needed to attack the existence question for semistable sheaves on compact K\"ahler manifolds.

\subsection*{Structure of the paper}
In Section~\ref{sect:prelim}, we collect the basic notions and their fundamental properties. More precisely, Section~\ref{subsect:sheafextensions} discusses sheaf extensions and their automorphisms, in Section~\ref{subsect:coherentsheaves} we introduce the notion of Gieseker-semistability with respect to a K\"ahler class and establish the basic properties of this notion, in Section~\ref{subsect:rank2} we provide the structure theory of semistable sheaves of rank two, and in Section~\ref{subsect:quotient_presentations} we establish the fundamental geometric properties of the stack of semistable sheaves, with particular emphasis on local quotient presentations and slice models. 

In Section~\ref{sect:construction} the existence of a good moduli space is established by checking the conditions given in \cite[Theorem~1.2]{AlperFS}.  

In the final section, Section~\ref{sect:properties}, we identify the points of the moduli space as representing $S$-equivalence classes of sheaves and establish separatedness and properness of the moduli space, completing our investigation.  

\subsection*{Acknowledgements}
The authors would like to thank Jarod Alper and Jochen Heinloth for discussions regarding algebraic stacks and good moduli spaces. Moreover, they want to express their deep gratitude to Peter Heinzner who in the early stages of the project invited MT to Bochum twice and contributed to the discussions that lead to the development of the approach pursued here. 

DG was partially supported by the DFG-Collaborative Research Center
  SFB/TR 45 ``Periods, moduli spaces and arithmetic of algebraic varieties''. Moreover, he would like to thank the Institut \'Elie Cartan de Lorraine for hospitality during a visit to Nancy in September 2015.

\section{Basic notions and first properties}\label{sect:prelim}

\subsection{Global conventions}
We work over the field of complex numbers. All manifolds are assumed to be connected. We will work on a fixed complex projective manifold $X$ endowed with a cohomology class $\alpha \in H^{1,1}(X, \mathbb{R})$ that can be represented by a K\"ahler form $\omega$; i.e., $\alpha = [\omega]$.

\subsection{Sheaf extensions and automorphisms}\label{subsect:sheafextensions}
Here we recall a few facts about sheaf extensions and state two  lemmata to be used later in the paper.  
We start by considering extensions of $\cO_X$-modules  over a ringed space $(X,\cO_X)$, where $\cO_X$ is a sheaf of $\C$-algebras. It is known that the $\C$-vector space $\E(E_2,E_1)$ of classes of extensions of $E_2$ by $E_1$ modulo Yoneda equivalence is canonically isomorphic  to $\Ext^1_{\cO_X}(E_2,E_1)$, cf.~\cite[Exercise III.6.1]{Hartshorne}, \cite[Exercise A3.26]{Eisenbud_CommAlg}. 
Morphisms  $\alpha\in \Hom_{\cO_X}(E_1,E'_1)$, $\beta\in\Hom_{\cO_X}(E'_2,E_2)$ induce natural linear maps 
$\alpha_*:\E(E_2,E_1)\to\E(E_2,E'_1)$, 
$\beta^*:\E(E_2,E_1)\to\E(E'_2,E_1)$, cf.~\cite[Exercise A3.26]{Eisenbud_CommAlg}. On the $\Ext^1$-side  these correspond exactly to the linear maps induced by $\alpha$ and $\beta$ using the natural morphisms $\alpha_*:\Hom_{\cO_X}(E_2,E_1)\to\Hom_{\cO_X}(E_2,E'_1)$, $\beta^*:\Hom_{\cO_X}(E_2,E_1)\to\Hom_{\cO_X}(E'_2,E_1)$. It follows that $\alpha_*\circ\beta^*=\beta^*\circ\alpha_*$ in $\Hom_\C(\E(E_2,E_1),\E(E'_2,E'_1))$. 

\begin{rem}\label{remark:extensions}
The following particular cases of the above construction will be used in the sequel:
\begin{enumerate}
\item When $E'_1=E_1$, $E'_2=E_2$, we get a natural action of 
$\Aut(E_1)\times\Aut(E_2)$ on $\E(E_2,E_1)$ by 
$$(\alpha,\beta)(\xi):= (\alpha_*\circ(\beta^{-1})^*)(\xi)=((\beta^{-1})^*\circ\alpha_*)(\xi),$$
for $\alpha\in \Aut(E_1)$, $\beta\in\Aut(E_2)$, $\xi\in\E(E_2,E_1)$, cf. \cite[Chapter 7]{LePotier_Lectures}. If moreover $E_1=E_2=:E$, we get a natural  action of $\Aut(E)$ on $\E(E,E)$ by
$$\alpha(\xi):=(\alpha,\alpha)(\xi)= (\alpha_*\circ(\alpha^{-1})^*)(\xi)=((\alpha^{-1})^*\circ\alpha_*)(\xi).$$
\item By functoriality, when $j\in \Hom_{\cO_X}(E_1,E'_1)$ admits a retract or  when  
$p\in\Hom_{\cO_X}(E'_2,E_2)$ admits a section, we get  injective maps $j_*:\E(E_2,E_1)\to\E(E_2,E'_1)$, 
 $p^*:\E(E_2,E_1)\to\E(E'_2,E_1)$.
\end{enumerate}
\end{rem}

We next show how these considerations apply to  infinitesimal deformations of sheaves. For simplicity we restrict ourselves to the case when $X$ is a compact analytic space and the sheaves involved are coherent, but note that similar arguments work in the category of coherent sheaves over schemes. We denote by $\nearrow:=(point, \C[t])$ the double point, where $\C[t]:=\C[T]/(T^2)$ is the algebra of dual numbers over $\C$. Let $F$ be a coherent sheaf on $X$ and $(S,0)$ a germ of a complex space. A deformation of $F$ with base $S$ is a pair $(\cF,\phi)$ where $\cF$ is a coherent sheaf on $X\times S$ flat over $S$ and $\phi:\cF_0\to F$ is an isomorphism. Two deformations $(\cF,\phi)$, $(\cF',\phi')$ of $F$ with base $S$ are called isomorphic if there exists an isomorphism of sheaves $\Phi:\cF\to\cF'$ such that $\phi'\circ\Phi=\phi$,
 \cite[Section 4.2.2]{Palamodov-versal}. 
There is a natural bijection between the set of isomorphism classes of deformations of $F$ with base $\nearrow$ also called (first-order deformations) and the vector space $\E(F,F)$, \cite[Theorem 2.7]{Hartshorne_Deformations}. 
Any deformation of $F$ with base $S$ gives rise to a "tangent map" $T_0S\to\E(F,F)$. Finally we mention that the automorphism group of $F$ naturally acts on the set of (isomorphism classes of) deformations of $F$ with base $S$ by $g(\cF,\phi):=(\cF,g\circ\phi)$, for $g\in\Aut(F)$. 

\begin{lemma}\label{lemma: deformations}
The natural identification between the set of isomorphism classes of first-order deformations of $F$  and $\E(F,F)$ is $\Aut(F)$-equivariant. 
\end{lemma}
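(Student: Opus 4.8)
The plan is to make the bijection of \cite[Theorem 2.7]{Hartshorne_Deformations} completely explicit and then transport the two group actions through it. Recall that a first-order deformation $(\cF,\phi)$ consists of a coherent sheaf $\cF$ of $\cO_X\tensor_\C\C[t]$-modules that is flat over $\C[t]$, together with an isomorphism $\phi:\cF\tensor_{\C[t]}\C\xrightarrow{\sim}F$. First I would recall that flatness over the dual numbers is equivalent to multiplication by $t$ inducing an isomorphism $\mu:\cF/t\cF\xrightarrow{\sim}t\cF$, so that with the quotient $q:\cF\to\cF/t\cF$ and the inclusion $\iota:t\cF\embeds\cF$ the sequence $0\to \cF/t\cF\xrightarrow{\iota\circ\mu}\cF\xrightarrow{q}\cF/t\cF\to 0$ is exact. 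Transporting the outer terms along $\phi$, the class the bijection assigns to $(\cF,\phi)$ is the Yoneda class in $\E(F,F)$ of
\[e(\cF,\phi):\quad 0\to F\xrightarrow{\;i\;}\cF\xrightarrow{\;p\;}F\to 0,\qquad i:=\iota\circ\mu\circ\phi^{-1},\ \ p:=\phi\circ q.\]

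Next I would read off the effect of the change of marking. For $g\in\Aut(F)$ we have $g(\cF,\phi)=(\cF,g\circ\phi)$, so the underlying sheaf $\cF$ and the intrinsic data $\mu,\iota,q$ are untouched and only $\phi$ is replaced by $g\circ\phi$. Hence $p=\phi\circ q$ becomes $g\circ p$, while $i=\iota\circ\mu\circ\phi^{-1}$ becomes $\iota\circ\mu\circ(g\circ\phi)^{-1}=i\circ g^{-1}$, giving
\[e(\cF,g\circ\phi):\quad 0\to F\xrightarrow{\;i\circ g^{-1}\;}\cF\xrightarrow{\;g\circ p\;}F\to 0.\]

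On the $\E(F,F)$-side I would record how pushout and pullback along an automorphism act on a representing extension. The pushout $\alpha_*$ along an isomorphism $\alpha:F\to F$ keeps the middle term $\cF$ and replaces the injection $i$ by $i\circ\alpha^{-1}$; this is immediate from the description $(F\oplus\cF)/\{(\alpha(a),-i(a))\}\cong\cF$, $[(a',b)]\mapsto b+i(\alpha^{-1}(a'))$. Dually, the pullback $\beta^*$ along an isomorphism $\beta$ keeps $\cF$ and replaces the surjection $p$ by $\beta^{-1}\circ p$. By the convention of Remark~\ref{remark:extensions}(1) the action is $g(\xi)=\big(g_*\circ(g^{-1})^*\big)(\xi)$: applying $(g^{-1})^*$ to $e(\cF,\phi)$ turns $p$ into $g\circ p$, and the subsequent $g_*$ turns $i$ into $i\circ g^{-1}$, so $g\cdot[e(\cF,\phi)]$ is represented by $0\to F\xrightarrow{i\circ g^{-1}}\cF\xrightarrow{g\circ p}F\to 0$. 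This is literally $e(\cF,g\circ\phi)$, which is exactly the equivariance claim.

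The hard part is not any calculation but the bookkeeping of conventions: one must match the side-assignments of Remark~\ref{remark:extensions} (pushout on the sub-sheaf, pullback on the quotient, and the $g^{-1}$ built into $(g^{-1})^*$) against the change of marking, and verify that the middle term $\cF$ is genuinely preserved by all four operations, so that the two extensions are compared on the nose rather than merely up to an unspecified isomorphism. Once the pair of labels $i\circ g^{-1}$ on the injection and $g\circ p$ on the surjection has been produced on both sides, the proof is complete.
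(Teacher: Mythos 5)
Your argument is correct: the explicit description of the class of $(\cF,\phi)$ as the extension $0\to F\xrightarrow{\iota\circ\mu\circ\phi^{-1}}\cF\xrightarrow{\phi\circ q}F\to 0$ is the standard one underlying \cite[Theorem 2.7]{Hartshorne_Deformations}, and your computation that both the change of marking $\phi\mapsto g\circ\phi$ and the action $g_*\circ(g^{-1})^*$ of Remark~\ref{remark:extensions}(1) produce the same representative, with injection $i\circ g^{-1}$ and surjection $g\circ p$ on the unchanged middle term, is exactly the verification needed. The paper states this lemma without proof, treating it as routine, so there is no argument to compare against; your write-up supplies the missing bookkeeping correctly.
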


Fix now two coherent sheaves $E_1$, $E_2$ on $X$. In our set-up $W:=\E(E_2,E_1)$ is a finite dimensional complex vector space and there exists a universal extension
\begin{equation}
0\to E_{1,W}\to \cE\to E_{2,W}\to 0
\end{equation}
on $X\times W$, 
\cite[Chapter 7]{LePotier_Lectures}. The central fibre of the universal extension is a trivial extension
\begin{equation}
0\to E_{1}\overset{\alpha}{\longrightarrow} \cE_0\overset{\beta}{\longrightarrow} E_{2}\to 0
\end{equation}
on $X$.
Fixing a section $s\in \Hom_{\cO_X}(E_2,\cE_0)$ gives us an isomorphism $\phi:\cE_0\to E_1\oplus E_2$ hence a deformation $(\cE,\phi)$ of $E_1\oplus E_2$ with base $(W,0)$.

\begin{lemma}\label{lemma:extensions-deformations}
The tangent map $\E(E_2,E_1)\to \E( E_1\oplus E_2,  E_1\oplus E_2)$ to the deformation $(\cE,\phi)$ induced by the universal extension  coincides  with the natural inclusion $(\phi\circ \alpha)_*\circ (\beta\circ\phi^{-1})^*=(\beta\circ\phi^{-1})^*\circ(\phi\circ \alpha)_*$ given by Remark \ref{remark:extensions}(2) and is equivariant with respect to the group homomorphism $\Aut(E_1)\times\Aut(E_2)\to\Aut(E_1\oplus E_2)$ and the actions described in Remark \ref{remark:extensions}(1). 
\end{lemma}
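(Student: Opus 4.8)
The plan is to compute the tangent map $\tau\colon T_0W = \E(E_2,E_1)\to\E(E_1\oplus E_2,E_1\oplus E_2)$ explicitly, read off that it equals the natural inclusion of Remark~\ref{remark:extensions}(2), and then deduce the equivariance formally from the relation $\mu_*\circ\nu^*=\nu^*\circ\mu_*$ recorded at the start of this subsection. As a first step I would use the chosen section $s$ to rigidify the maps: since $\phi$ is by construction the inverse of $(\alpha,s)\colon E_1\oplus E_2\to\cE_0$, the map $\phi\circ\alpha$ is the canonical inclusion $j_1\colon E_1\to E_1\oplus E_2$ (retracted by $\pi_1$), and, using $\beta\circ\alpha=0$ and $\beta\circ s=\id_{E_2}$, the map $\beta\circ\phi^{-1}$ is the canonical projection $\pi_2\colon E_1\oplus E_2\to E_2$ (split by $j_2$). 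Thus Remark~\ref{remark:extensions}(2) applies to both, the two composites in the statement agree by the commutation relation, and the target map $(\phi\circ\alpha)_*\circ(\beta\circ\phi^{-1})^*=(j_1)_*\circ(\pi_2)^*$ is injective with image the off-diagonal summand $\E(E_2,E_1)$ of the matrix decomposition $\E(E_1\oplus E_2,E_1\oplus E_2)=\bigoplus_{i,j}\E(E_i,E_j)$.

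Next I would pin down the shape of $\tau$. Because $W$ is a vector space and the universal extension depends linearly on the extension parameter (its central fibre being the split, hence trivial, extension), restricting it along the map $\nearrow\to W$ determined by $v\in T_0W$ yields on $X\times\nearrow$ a short exact sequence of $\C[t]$-flat families $0\to E_1\otimes\C[t]\to\cE_v\to E_2\otimes\C[t]\to 0$ whose sub- and quotient-families are the trivial first-order deformations of $E_1$ and $E_2$ and whose extension class is $t\cdot v$. The inclusion $E_1\otimes\C[t]\hookrightarrow\cE_v$ and the projection $\cE_v\twoheadrightarrow E_2\otimes\C[t]$ are morphisms of first-order deformations lifting $j_1$ and $\pi_2$. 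Applying the standard pushout--pullback compatibility of Kodaira--Spencer classes along a morphism of deformations (the existence of the connecting morphism forces $a_*\kappa_{\mathrm{source}}=a^*\kappa_{\mathrm{target}}$) together with the vanishing of the Kodaira--Spencer class of a trivial deformation gives $(j_1)^*\tau(v)=0$ and $(\pi_2)_*\tau(v)=0$; the first relation kills the column over $E_1$, the second the row into $E_2$, so $\tau(v)$ is concentrated in the corner and $\tau(v)=(j_1)_*(\pi_2)^*\,\kappa_{12}(v)$ for a unique $\kappa_{12}(v)\in\E(E_2,E_1)$.

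It then remains to prove $\kappa_{12}(v)=v$, which I expect to be the main obstacle: the previous step only constrains the shape of $\tau(v)$, not its value. The content is that the $\E(E_2,E_1)$-component of the Kodaira--Spencer class of $\cE_v$ is the first-order variation of its extension class, which by the universal property of $\cE$ is exactly $v$. I would establish this by a direct local computation: choosing $\cO_X$-local splittings of $\cE_v$ compatible with $\phi$ (after passing to a locally free resolution in order to legitimise \v{C}ech representatives for coherent sheaves), the transition automorphisms have the form $\id+t\,\theta_{ab}$ with $\{\theta_{ab}\}$ representing $\tau(v)$; the fact that $E_1\otimes\C[t]$ is a globally defined subfamily with trivial sub- and quotient-deformation forces $\theta_{ab}$ to be strictly upper triangular, and its single nonzero block $(\theta_{ab})_{12}\in\Hom(E_2,E_1)$ is precisely the cocycle gluing the restricted universal extension, i.e.\ a representative of $v$. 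Keeping track of this identification carefully is the delicate point; equivalently one may phrase it as a Yoneda pushout--pullback manipulation verifying $(\pi_1)_*(j_2)^*\tau(v)=v$.

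Finally, the equivariance is formal once $\tau=(j_1)_*\circ(\pi_2)^*$ is known. Writing the homomorphism $\Aut(E_1)\times\Aut(E_2)\to\Aut(E_1\oplus E_2)$ as $(g_1,g_2)\mapsto g_1\oplus g_2$ and using $(g_1\oplus g_2)\circ j_1=j_1\circ g_1$, $\pi_2\circ(g_1\oplus g_2)=g_2\circ\pi_2$, functoriality of $(-)_*$ and $(-)^*$, and their commutation, one checks directly that $\tau\big((g_1)_*(g_2^{-1})^*\xi\big)=(g_1\oplus g_2)_*\big((g_1\oplus g_2)^{-1}\big)^*\tau(\xi)$, which is exactly the asserted equivariance for the two actions of Remark~\ref{remark:extensions}(1). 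Alternatively, it follows from the $\Aut(E_1)\times\Aut(E_2)$-equivariance of the universal extension combined with Lemma~\ref{lemma: deformations}.
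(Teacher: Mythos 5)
Your overall architecture is sound and is organised genuinely differently from the paper's proof: you first constrain the \emph{shape} of the tangent map, showing via functoriality of Kodaira--Spencer classes along the morphisms of deformations $E_1\otimes\C[t]\hookrightarrow\cE_v\twoheadrightarrow E_2\otimes\C[t]$ that $\tau(v)$ lies in the corner summand $\E(E_2,E_1)\subset\E(E_1\oplus E_2,E_1\oplus E_2)$, and only then compute the single surviving component. The identifications $\phi\circ\alpha=j_1$ and $\beta\circ\phi^{-1}=\pi_2$, the resulting description of the target map as $(j_1)_*\circ(\pi_2)^*$, and the purely formal derivation of the equivariance statement from that formula are all correct (the paper likewise treats equivariance as a formal consequence). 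The paper, by contrast, skips the shape reduction and directly compares explicit Yoneda representatives of the two classes as extensions of $E_0$ by $E_0$.

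The gap sits exactly where you place it: the identity $\kappa_{12}(v)=v$ is asserted but not proved, and your primary route to it would fail. A first-order deformation of a coherent sheaf that is not locally free need not be locally trivial (the obstruction to local triviality lives in $H^0(X,\mathcal{E}xt^1(F,F))$, which can be non-zero for the torsion-free, non-locally-free sheaves relevant here), so $\cE_v$ is in general \emph{not} glued from trivial pieces by transition automorphisms $\id+t\,\theta_{ab}$; passing to a locally free resolution does not rescue the picture but replaces it by a hypercohomology computation you would still have to perform. Your fallback --- the Yoneda pushout--pullback verification of $(\pi_1)_*(j_2)^*\tau(v)=v$ --- is the right move, but it is only named, and it is precisely where all the work in the paper's argument lies: one writes an explicit $\cO_{2X}$-module representative of the restricted universal extension of class $t\pi^*(\xi)$ (the middle term $E_0\oplus E$ with $t$-multiplication $\left(\begin{smallmatrix}0&s\circ p\\ j\circ r&0\end{smallmatrix}\right)$), tensors with $0\to\cO_X\stackrel{t}{\to}\cO_{2X}\to\cO_X\to0$ to extract the deformation class as the middle row of a $3\times3$ diagram, and then exhibits an explicit isomorphism of extensions ($\id+\epsilon-\epsilon'$ for suitable square-zero $\epsilon,\epsilon'$) with a chosen representative of $\beta^*\alpha_*(\xi)$. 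Until that computation is carried out, your argument determines $\tau(v)$ only up to an unknown linear endomorphism $\kappa_{12}$ of $\E(E_2,E_1)$; nothing in the shape reduction rules out $\kappa_{12}\neq\id$ (e.g.\ a sign), and since the equivariance and injectivity claims are what the rest of the paper actually uses, pinning down $\kappa_{12}$ exactly is not optional.
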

\begin{proof}
We will check that the images in $\E( E_1\oplus E_2,  E_1\oplus E_2)$ of the class $\xi\in\E(E_2,E_1)$ of any extension
\begin{equation}
\label{equation:xi}
0\to E_{1}\overset{j}{\longrightarrow} E\overset{p}{\longrightarrow} E_{2}\to 0
\end{equation} 
of coherent sheaves on $X$  induced in the two different ways described in the statement coincide. The second part of the Lemma will follow from this.

Consider in addition a trivial extension 
\begin{equation}
\label{equation: trivial extension}
0\to E_{1}\overset{\alpha}{\longrightarrow} E_0\overset{\beta}{\longrightarrow} E_{2}\to 0
\end{equation}
and fix a section $s:E_2\to E_0$ of $\beta$ and the induced retraction $r:E_0\to E_1$ of $\alpha$. Then it is directly seen that the class $\alpha_*(\xi)\in \E(E_2,E_0)$ is represented by the second line of the following commutative diagram:
 \begin{equation*}
\begin{gathered}
\xymatrix{ 
0\ar[r] & E_1 \ar[d]^{\alpha}\ar[r]^{j} 
& E\ar[r]^p \ar[d]^{\left(\begin{smallmatrix} \id\\ 0
  \end{smallmatrix}\right)} & E_2 \ar[d]^{\id} \ar[r] & 0 \\
0\ar[r] & E_0 \ar[r]^{\left(\begin{smallmatrix} j\circ r\\ \beta
  \end{smallmatrix}\right)} & E\oplus E_2 \ar[r]^{\left(\begin{smallmatrix} p&0
  \end{smallmatrix}\right)}  & E_2 \ar[r] & 0
}
\end{gathered}
\end{equation*}
and that the first line of the diagram 
  \begin{equation*}
\begin{gathered}
\xymatrix{ 
0\ar[r] 
& E_0 \ar[d]^{\id}\ar[r]^{\left(\begin{smallmatrix} j\circ r\\ s\circ\beta\end{smallmatrix}\right)}
 & E\oplus E_0\ar[r]^{\left(\begin{smallmatrix} s\circ p&\alpha\circ r \end{smallmatrix}\right)}  \ar[d]^{\left(\begin{smallmatrix} \id& 0\\0&\beta
  \end{smallmatrix}\right)} & E_0 \ar[d]^{\beta} \ar[r] & 0 \\
0\ar[r] & E_0 \ar[r]^{\left(\begin{smallmatrix} j\circ r\\ \beta
  \end{smallmatrix}\right)} & E\oplus E_2 \ar[r]^{\left(\begin{smallmatrix} p&0
  \end{smallmatrix}\right)}  & E_2 \ar[r] & 0
}
\end{gathered}
\end{equation*}
 represents $\beta^*(\alpha_*(\xi))$. We will later use this first line under the form
   \begin{equation}
   \label{equation:beta(alpha(xi))}
\begin{gathered}
\xymatrix{ 
0\ar[r] 
& E_0 \ar[r]^{\gamma_1}
& E_0\oplus E\ar[r]^{\delta_1}
& E_0  \ar[r] & 0, 
}
\end{gathered}
\end{equation}
with $\gamma_1=\left(\begin{smallmatrix} s\circ\beta\\j\circ r\end{smallmatrix}\right)$ and $\delta_1=\left(\begin{smallmatrix} \alpha\circ r&s\circ p \end{smallmatrix}\right)$.

We next look at the restriction of the universal extension over the embedded double point $\nearrow$ at $0$ in $W$, which  points in the direction of $\xi$. We will write $2X:=X\times \nearrow\subset X\times W$,  $X:=X\times 0\subset X\times \nearrow\subset X\times W$ and denote by $\cO_X[t]:=\cO_X\otimes_\C\C[t]=\cO_{2X}$ the structure ring of $2X$ and by $\pi:2X\to X$ the projection. The class of this extension will be given by $t\pi^*(\xi)\in \Ext^1_{\cO_{2X}}(E_{2,2X},E_{1,2X})$, where $E_{i,2X}:=\pi^*E_i$. The multiplication by $t$ on $\Ext^1_{\cO_{2X}}(F_2,F_1)$ is given by $\mu_*:\Ext^1_{\cO_{2X}}(F_2,F_1)\to\Ext^1_{\cO_{2X}}(F_2,F_1)$, where $\mu=\mu_t:F_1\to F_1$ is the multiplication morphism by $t$ on $F_1$. We apply it to the element $\pi^*(\xi)$ which is represented by the pull-back of the extension \ref{equation:xi} to $2X$. We first note that the inverse image $\pi^*F=F\otimes_\C\C[t]$ to $2X$ through $\pi$ of a $\cO_X$-module $F$ is isomorphic as a $\cO_X$-module to $F\oplus F$. On $F\oplus F$ multiplication by $t$ is given by the $\cO_X$-linear operator 
$\left(\begin{smallmatrix}0& 0\\ \id&0\end{smallmatrix}\right)$ which gives  $F\oplus F$ its $\cO_X[t]$-module structure back. In these terms the extension \ref{equation:xi} pulls back to $2X$ to
   \begin{equation}
\begin{gathered}
\xymatrix{ 
0\ar[r] 
& E_1\oplus E_1 \ar[r]^{\left(\begin{smallmatrix} j&0\\0&j\end{smallmatrix}\right)}
 & E\oplus E\ar[r]^{\left(\begin{smallmatrix} p&0\\0&p \end{smallmatrix}\right)}   & E_2\oplus E_2  \ar[r] & 0. 
}
\end{gathered}
\end{equation}
and the lower line of the following diagram of $\cO_{2X}$-modules represents $t\pi^*(\xi)$:
 \begin{equation}
\label{equation: t xi}
\begin{gathered}
\xymatrix{ 
0\ar[r] 
& E_1\oplus E_1 
\ar[d]^{\left(\begin{smallmatrix} 0& 0\\ \id&0  \end{smallmatrix}\right)} \ar[r]^{\left(\begin{smallmatrix} j&0\\0&j\end{smallmatrix}\right)}
 & E\oplus E
 \ar[d]^>>>>>>>{\left(\begin{smallmatrix} 0& s\circ p\\ \id&0
  \end{smallmatrix}\right)} \ar[r]^{\left(\begin{smallmatrix} p&0\\0&p \end{smallmatrix}\right)}   
  & E_2\oplus E_2 \ar[d]^{\id} \ar[r] & 0. 
 \\
0\ar[r] 
& E_1\oplus E_1  
\ar[r]^{\left(\begin{smallmatrix} \alpha&0\\0&j
  \end{smallmatrix}\right)} 
& E_0\oplus E 
\ar[r]^{\left(\begin{smallmatrix} 0&p\\ \beta&0
  \end{smallmatrix}\right)}  
  & E_2\oplus E_2 \ar[r] & 0,
}
\end{gathered}
\end{equation}
where the $t$-multiplication on the term $E_0\oplus E $ is given by the operator $\left(\begin{smallmatrix} 0& s\circ p\\ j\circ r&0
  \end{smallmatrix}\right)$. We may write this line also under the form 
  $ 0\to \pi^*E_1\to\cE\to\pi^*E_2\to0$. Tensoring it by $0\to\cO_X\stackrel{t}\rightarrow\cO_{2X}\to\cO_X\to0$ leads to a commutative diagram 
   \begin{equation}
\label{equation: diagram 3x3}
\begin{gathered}
\xymatrix{ 
 &0\ar[d]&0\ar[d]&0\ar[d]&
\\
0\ar[r] 
& E_1 \ar[d]^{\alpha} \ar[r]
& \pi^*E_1\ar[d] \ar[r]
& E_1 \ar[d] \ar[r] & 0 
\\
0\ar[r] 
& E_0\ar[d]^\beta\ar[r]^{\gamma_2}
&\cE\ar[d]\ar[r]^{\delta_2}  
& E_0\ar[d]^\beta\ar[r] & 0
\\
0\ar[r] 
& E_2 \ar[d]^{\alpha} \ar[r]
& \pi^*E_2 \ar[r]\ar[d] 
& E_2 \ar[d] \ar[r] & 0 
\\
  &0&0&0&
}
\end{gathered}
\end{equation}
of $\cO_{2X}$-modules with exact rows and columns and the extension class of its middle row is the first order deformation induced by the family $\cE$. If we replace  now  this middle row by the sequence \ref{equation:beta(alpha(xi))} representing $\beta^*(\alpha_*(\xi))$ we get again a diagram of $\cO_{2X}$-modules with exact rows and columns
\begin{equation}
\label{equation: diagram 3x3 new}
\begin{gathered}
\xymatrix{ 
 &0\ar[d]&0\ar[d]&0\ar[d]&
\\
0\ar[r] 
& E_1 \ar[d]^{\alpha} 
\ar[r]^{\left(\begin{smallmatrix}  0\\ \id
  \end{smallmatrix}\right)}
& E_1\oplus E_1
\ar[d]^{\left(\begin{smallmatrix} \alpha&0\\0&j
  \end{smallmatrix}\right)} 
  \ar[r]^{\left(\begin{smallmatrix} \id& 0
  \end{smallmatrix}\right)}
& E_1 \ar[d]^{\alpha} \ar[r] & 0 
\\
0\ar[r] 
& E_0\ar[d]^\beta\ar[r]^{\gamma_1}
&E_0\oplus E
\ar[d]^{\left(\begin{smallmatrix} 0&p\\ \beta&0
  \end{smallmatrix}\right)} 
\ar[r]^{\delta_1}  
& E_0\ar[d]^\beta\ar[r] & 0
\\
0\ar[r] 
& E_2 
\ar[d] 
\ar[r]^{\left(\begin{smallmatrix}  0\\ \id
  \end{smallmatrix}\right)}
& E_2\oplus E_2 
\ar[r]^{\left(\begin{smallmatrix} \id& 0
  \end{smallmatrix}\right)}
\ar[d] 
& E_2 \ar[d] \ar[r] & 0 
\\
  &0&0&0&
}
\end{gathered}
\end{equation} 
and it follows that the differences $\delta_2-\delta_1$ and $\gamma_2-\gamma_1$ factorize through morphisms of $\cO_{2X}$-modules $\left(\begin{smallmatrix}  u&0
  \end{smallmatrix}\right):E_2\oplus E_2\to E_1$ and $\left(\begin{smallmatrix}  0\\ v
  \end{smallmatrix}\right):E_2\to E_1\oplus E_1$, respectively, with $u,v\in\Hom_{\cO_X}(E_2,E_1)$. Putting $\epsilon=\left(\begin{smallmatrix} 0&\alpha\circ u\circ p\\ 0&0
  \end{smallmatrix}\right)$ 
  and 
  $\epsilon'=\left(\begin{smallmatrix} 0&0\\ j\circ v\circ\beta&0
  \end{smallmatrix}\right)$ in $\End_{\cO_X}(E_0\oplus E)$
  we get $\epsilon^2=(\epsilon')^2=\epsilon\circ\epsilon'=
  \epsilon'\circ\epsilon=0$, $\delta_2-\delta_1=\delta_1\circ\epsilon$, $\gamma_2-\gamma_1=\epsilon'\circ\gamma_1$, $\epsilon\circ\gamma_1=0$ and $\delta_1\circ\epsilon'=0$. Then 
  the commutative diagram 
   \begin{equation*}
\begin{gathered}
\xymatrix{ 
0\ar[r] & E_0 \ar[d]^{\id}\ar[r]^{\gamma_2} 
& E_0\oplus E\ar[r]^{\delta_2} \ar[d]^{\id+\epsilon-\epsilon'} & E_0 \ar[d]^{\id} \ar[r] & 0 \\
0\ar[r] & E_0 \ar[r]^{\gamma_1} 
& E_0\oplus E\ar[r]^{\delta_1} & E_0  \ar[r] & 0 
}
\end{gathered}
\end{equation*}
  shows that the desired extensions lie in the same class in $\Ext^1_{\cO_X}(E_0,E_0)$ and we are done.  
\end{proof}

\subsection{Semistable coherent sheaves}\label{subsect:coherentsheaves}

We will work on a fixed complex projective manifold $X$ endowed with a cohomology class $\omega \in H^{1,1}(X, \mathbb{R})$ that can be represented by a K\"ahler form. This class will serve as a polarisation which will help us to define  Gieseker-Maruyama-semistability for coherent sheaves on $X$, cf. \cite[Definition 11.1]{GRT1}. We start by studying basic properties of semistable sheaves. For simplicity, we will only consider the case of torsion-free sheaves, although most properties are valid for pure coherent sheaves. Later on, we will focus on the case of rank two torsion-free sheaves.

\begin{defi}
 Let $E$ be a coherent sheaf on $X$. Its \emph{Hilbert-polynomial (with respect to $\omega$)} is defined as the polynomial function (with coefficients in $\mathbb{C}$) that is given by
 $$P_E(m):=P^{\omega}_E(m) := \int_X \ch(E)\, e^{m\omega}\, \Todd(X),$$
 where $\ch(E)$ and $\Todd(X)$ denote the Chern character of $E$ and the Todd class of $X$, respectively.
 If $E$ is torsion-free and non-zero we define its \emph{reduced Hilbert-polynomial} as
 $$p_E:=p^{\omega}_E := \frac{P_E}{\rank E}. $$
 We will say that $E$ is \emph{(Gieseker-Maruyama-)stable (with respect to $\omega$)} and \emph{semistable}, respectively, if $E$ is torsion-free and if for any coherent subsheaf $0\neq F\subsetneq E$ one has $p_F<p_E$ and $p_F\le p_E$, respectively. We will call $E$ \emph{polystable} if it splits as a direct sum of stable subsheaves having the same reduced Hilbert-polynomial.
 If $E$ is semistable but not stable we will say that it is \emph{properly semistable}.
\end{defi}

The usual relations to slope-stability (with respect to $\omega$), which will also be referred to as $\mu$-stability, continue to hold in this context, namely:\\

\centerline{
$\mu$-stable $\Rightarrow$ stable $\Rightarrow$ semistable
$\Rightarrow$ $\mu$-semistable,}

\noindent\ \\ 
cf.~\cite[Lem.~1.6.4]{HL}.
In particular, the boundedness result \cite[Proposition 6.3]{GrebToma} for $\mu$-semistable sheaves implies:

\begin{Prop}[Boundedness]\label{boundedness}
Let $X$ be a $d$-dimensional projective manifold and let $K$ a compact subset of the K\"ahler cone $\mathcal{K}(X) \subset H^{1,1}(X, \mathbb{R})$ of $X$. Fix a natural number $r>0$ and classes $c_i\in H^{2i}(X,\R)$, $i = 1, \dots, d$. Then, the family
of rank $r$ torsion-free sheaves $E$ with $c_i(E)=c_i$ that are semistable with respect to some polarisation contained in $K$ is bounded.
\end{Prop}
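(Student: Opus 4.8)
The plan is to deduce the statement directly from its $\mu$-semistable counterpart, exploiting the implication chain recorded just above the Proposition. The only feature particular to Gieseker--Maruyama-semistability that I would need is the last implication, semistable $\Rightarrow$ $\mu$-semistable, which holds with respect to a \emph{fixed} polarisation (see \cite[Lem.~1.6.4]{HL}); everything else reduces to the already-available boundedness of $\mu$-semistable sheaves.

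Concretely, I would argue as follows. Let $\mathcal{S}$ denote the family of rank $r$ torsion-free sheaves $E$ with $c_i(E)=c_i$ that are semistable with respect to \emph{some} polarisation $\alpha_E \in K$, and let $\mathcal{S}'$ denote the analogous family in which ``semistable'' is replaced by ``$\mu$-semistable''. Since any sheaf that is semistable with respect to a class $\alpha_E \in K$ is in particular $\mu$-semistable with respect to the same $\alpha_E \in K$, one has the inclusion $\mathcal{S} \subseteq \mathcal{S}'$. Now $\mathcal{S}'$ is bounded by \cite[Proposition 6.3]{GrebToma}, which yields boundedness of $\mu$-semistable torsion-free sheaves of fixed rank and Chern classes uniformly over the compact set of polarisations $K$; note that prescribing all the classes $c_i$ is at least as restrictive as prescribing rank, $c_1$ and $c_2$, so the cited result certainly applies. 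As a subfamily of a bounded family is again bounded, the boundedness of $\mathcal{S}$ follows at once.

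The genuine content of the assertion is thus entirely contained in the cited result, and the reduction itself is essentially formal. The subtle point in \cite[Proposition 6.3]{GrebToma} — the one I would flag as the main obstacle were it not already established — is the \emph{uniformity} of the boundedness over the compact family $K$ of possibly non-integral, even irrational, K\"ahler classes: for a single integral ample polarisation this is classical (Maruyama, Langer), but producing a single finite-type parameter scheme (equivalently, a uniform Castelnuovo--Mumford regularity bound) valid for all polarisations in $K$ requires the Bogomolov inequality together with a compactness argument on $K$. Since we are free to invoke \cite[Proposition 6.3]{GrebToma}, no further work is needed here.
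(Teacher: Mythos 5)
Your reduction is exactly the paper's own argument: the implication ``semistable $\Rightarrow$ $\mu$-semistable'' with respect to the same polarisation, followed by an appeal to \cite[Proposition 6.3]{GrebToma} for $\mu$-semistable sheaves uniformly over the compact set $K$, with the observation that a subfamily of a bounded family is bounded. The proposal is correct and coincides with the paper's intended proof.
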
  

The proofs of the following three basic results are standard and therefore left to the reader, cf.~\cite[Prop.~1.2.7]{HL}, \cite[Prop.~3.1]{Seshadri}\footnote{Seshadri formulates and proves the corresponding result for slope-semistable vector bundles of degree zero over a Riemann surface; Gieseker-Maruyama-semistability is the correct higher-dimensional semistability condition to make this work in general.} and \cite[Section 9.3]{LePotier_Lectures}, and finally \cite[Prop.~1.5.2]{HL}, respectively. 

\begin{lemma}\label{lem:morphisms}
Let $E$ and $E'$ be  semistable sheaves on $X$  and let $\phi :E\to E' $ be a non-zero morphism of $\cO_X$-modules. Then $p_E\le p_{E'}$. If equality holds, then $\im(\phi)$ is semistable and $p_{\im(\phi)}=p_E= p_{E'}$. If moreover the rank of $\im(\phi)$ coincides with the rank of $E$ or with the rank of $E'$ then $\im(\phi)$ is isomorphic to $E$ or to $E'$ respectively.
\end{lemma}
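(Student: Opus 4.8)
The plan is to analyze the non-zero morphism $\phi : E \to E'$ between semistable sheaves by factoring it through its image. Let $I := \im(\phi) \subset E'$, so that $\phi$ factors as $E \twoheadrightarrow I \hookrightarrow E'$. The key observation is that $I$ is a non-zero quotient of $E$ and a non-zero subsheaf of $E'$, which lets me play the semistability inequalities of $E$ and $E'$ against each other through $I$.

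First I would record the basic inequalities coming from semistability. Since $I$ is a non-zero subsheaf of the semistable sheaf $E'$, we have $p_I \le p_{E'}$. For the other direction, I would use semistability of $E$ applied to the kernel $K := \ker(\phi)$. If $K = 0$, then $\phi$ is injective and $I \cong E$, so $p_I = p_E$; otherwise $K$ is a non-trivial subsheaf of $E$, giving $p_K \le p_E$, and then the additivity of Hilbert polynomials in the short exact sequence $0 \to K \to E \to I \to 0$ together with a standard manipulation of reduced Hilbert polynomials yields $p_E \le p_I$. (Concretely, writing $P_E = P_K + P_I$ and using $p_K \le p_E$, one deduces $p_E \le p_I$; this is the usual "quotient sheaf" form of semistability.) Combining the two gives the chain $p_E \le p_I \le p_{E'}$, which in particular proves the first assertion $p_E \le p_{E'}$.

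Next I would treat the equality case $p_E = p_{E'}$. Then both inequalities above are forced to be equalities, so $p_I = p_E = p_{E'}$. To see that $I$ itself is semistable, I would take any non-zero subsheaf $0 \neq G \subsetneq I$; since $I \hookrightarrow E'$, $G$ is also a subsheaf of the semistable $E'$, whence $p_G \le p_{E'} = p_I$, which is exactly semistability of $I$. The final assertion concerns the rank. If $\rank I = \rank E$, then the surjection $E \twoheadrightarrow I$ has kernel $K$ of rank zero, i.e.\ $K$ is torsion; but $E$ is torsion-free (being semistable), so $K = 0$ and $\phi$ is injective, giving $E \cong I$. Dually, if $\rank I = \rank E'$, then the inclusion $I \hookrightarrow E'$ is generically an isomorphism, so the quotient $E'/I$ has rank zero; since $E'/I$ together with the fact that $p_I = p_{E'}$ and the additivity of Hilbert polynomials forces $P_{E'/I} = 0$, and a torsion sheaf has nonnegative Hilbert polynomial with strictly positive leading behaviour unless it vanishes, one concludes $E'/I = 0$, hence $I \cong E'$.

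The main obstacle I anticipate lies in making the rank arguments fully rigorous, since in this K\"ahler (non-algebraic-polarisation) setting the Hilbert polynomial has coefficients in $\mathbb{C}$ and one cannot simply invoke the classical fact that a non-zero torsion sheaf has strictly positive Hilbert polynomial for large $m$. I would therefore want to be careful about the precise meaning of "$P_{E'/I} = 0$ implies $E'/I = 0$": the clean way is to argue via ranks and supports directly rather than through positivity, namely a coherent sheaf of rank zero whose full Hilbert polynomial (with respect to the K\"ahler class) vanishes identically must be the zero sheaf, which follows from examining the Chern character and the non-degeneracy of integration against $e^{m\omega}\,\Todd(X)$ for a K\"ahler class $\omega$. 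Establishing this vanishing criterion carefully — or, alternatively, reducing the equality-of-ranks statements purely to the torsion-freeness of $E$ and $E'$ and the behaviour of ranks in exact sequences — is the step that requires the most care.
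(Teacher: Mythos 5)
Your proof is correct and is precisely the standard argument that the paper itself invokes (it leaves this lemma to the reader with a reference to \cite[Prop.~1.2.7]{HL}): factor $\phi$ through its image $I$, squeeze $p_E\le p_I\le p_{E'}$ using semistability of $E$ on the kernel and of $E'$ on the subsheaf $I$, and settle the two rank cases via torsion-freeness of $E$ and the vanishing of $P_{E'/I}$. The point you flag at the end is the only place the K\"ahler class enters and it is harmless: a non-zero coherent sheaf supported in dimension $k$ has $P^\omega(m)$ with leading coefficient a positive multiple of the integral of $\omega^k$ over its (effective) support cycle, so $P_{E'/I}=0$ does force $E'/I=0$.
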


\begin{prop}
\label{prop:abelian_category}
The full subcategory $\Coh^{ss}{(X,\omega,p)}$ of the category of coherent sheaves on $X$, whose objects are the semistable sheaves with fixed reduced Hilbert polynomial $p$ and the zero-sheaf, is abelian, noetherian and artinian. 
\end{prop}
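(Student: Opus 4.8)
The plan is to verify directly that $\Coh^{ss}(X,\omega,p)$ satisfies the axioms of an abelian category by showing it is closed under the relevant constructions inside the ambient abelian category $\Coh(X)$ of all coherent sheaves, and then to treat the noetherian and artinian properties separately via a length argument. The key input throughout will be Lemma~\ref{lem:morphisms}: the fact that a non-zero morphism between semistable sheaves of the \emph{same} reduced Hilbert polynomial $p$ has semistable image with reduced Hilbert polynomial again equal to $p$.

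First I would establish that the subcategory is abelian. Since $\Coh(X)$ is abelian, it suffices to check that $\Coh^{ss}(X,\omega,p)$ is a full additive subcategory closed under taking kernels and cokernels, and that these agree with the kernels and cokernels computed in $\Coh(X)$. Additivity is clear since a finite direct sum of semistable sheaves with reduced Hilbert polynomial $p$ is again semistable with the same $p$. For a non-zero morphism $\phi\colon E\to E'$ between objects of the subcategory, Lemma~\ref{lem:morphisms} gives $p_{\im\phi}=p$ and $\im\phi$ semistable. I would then argue that $\ker\phi\subset E$ and $\coker\phi$ (a quotient of $E'$) are semistable with reduced Hilbert polynomial $p$: using the short exact sequences $0\to\ker\phi\to E\to\im\phi\to0$ and $0\to\im\phi\to E'\to\coker\phi\to0$ together with additivity of the Hilbert polynomial in exact sequences, and the semistability inequalities $p_{\ker\phi}\le p$ and $p_{\im\phi}=p$, one forces $p_{\ker\phi}=p$; semistability of $\ker\phi$ follows because any destabilising subsheaf would also destabilise $E$. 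A symmetric argument, working with quotients and subsheaves, handles $\coker\phi$; here torsion-freeness of the cokernel needs a small check, but this follows because the torsion subsheaf would have reduced Hilbert polynomial dominating $p$, contradicting semistability of $E'$.

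The noetherian and artinian properties I would deduce from a bound on the length of objects. The crucial observation is that in $\Coh^{ss}(X,\omega,p)$ every object $E$ admits only finitely many steps in any chain of subobjects, because all subobjects and quotients share the \emph{same} reduced Hilbert polynomial $p$, so their ranks are strictly positive integers bounded by $\rank E$. Concretely, in any strictly increasing or strictly decreasing chain of subobjects inside the abelian category, each proper inclusion strictly increases the rank (the quotient being a non-zero torsion-free object of positive rank), whence every such chain has length at most $\rank E$. This simultaneously yields the ascending and descending chain conditions, so the category is both noetherian and artinian, and in particular every object has a Jordan--Hölder filtration with stable factors of reduced Hilbert polynomial $p$.

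I expect the main obstacle to lie in the abelian-category verification, specifically in confirming that cokernels stay within the subcategory: one must simultaneously control torsion-freeness of $\coker\phi$ and its semistability, and ensure that the cokernel computed in $\Coh(X)$ is not altered by passing to the subcategory. The rank bound for the chain conditions is comparatively routine once the abelian structure and the uniform reduced Hilbert polynomial are in place. It is worth noting that the whole argument is a direct adaptation of the classical statement for Gieseker-semistable sheaves with respect to an ample polarisation, cf.~\cite[Prop.~1.2.7, Prop.~1.5.2]{HL}, the only new ingredient being that $p_E$ is now a polynomial with complex coefficients attached to the K\"ahler class $\omega$; since the comparison of reduced Hilbert polynomials is still a well-defined total preorder (comparison for $m\gg0$), all the order-theoretic arguments carry over verbatim.
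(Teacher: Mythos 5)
Your argument is correct and is exactly the standard proof the paper intends: the authors explicitly leave this statement to the reader with a pointer to \cite[Prop.~1.2.7 and Prop.~1.5.2]{HL}, and your verification --- closure under kernels, images and cokernels via Lemma~\ref{lem:morphisms} together with additivity of $P$ and of the rank in short exact sequences, followed by the bound $\rank E$ on the length of chains of subobjects --- is precisely that argument transported to the K\"ahler polarisation. The only step deserving one more line is the torsion-freeness of $\coker\phi$: one should pass to the preimage in $E'$ of the torsion subsheaf of $\coker\phi$ and compare its reduced Hilbert polynomial with $p_{E'}$, using that a non-zero torsion sheaf has eventually positive Hilbert polynomial because $\omega$ is a K\"ahler class.
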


\begin{prop}[Jordan-H\"older filtrations]\label{prop:JH}
 Any semistable sheaf on $X$ admits a Jordan-H\"older filtration  in the sense of \cite[Def.~1.5.1]{HL} (with respect to $\omega$-stability). The associated graded sheaf is unique up to isomorphism.
\end{prop}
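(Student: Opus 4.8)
The plan is to derive both existence and uniqueness formally from the abelian-category structure provided by Proposition~\ref{prop:abelian_category}. Fix a nonzero semistable sheaf $E$, set $p := p_E$, and work inside the abelian category $\cC := \Coh^{ss}(X,\omega,p)$, of which $E$ is an object. The first point to record is that the inclusion $\cC \hookrightarrow \Coh(X)$ is exact, i.e.\ kernels and cokernels in $\cC$ are computed sheaf-theoretically: if $F \subseteq G$ are objects of $\cC$, then $p_{G/F} = (P_G - P_F)/(\rank(G) - \rank(F)) = p$ and $G/F$ is again semistable, so the sheaf quotient already lies in $\cC$. Consequently a nonzero proper subobject of an object $G \in \cC$ is the same thing as a proper subsheaf $F \subsetneq G$ with $p_F = p$; note that $p_F = p$ automatically forces $F$ to be semistable, since $F' \subseteq F \subseteq G$ gives $p_{F'} \le p_G = p_F$. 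It follows that the simple objects of $\cC$ are precisely the stable sheaves with reduced Hilbert polynomial $p$.

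For existence, Proposition~\ref{prop:abelian_category} tells us that $\cC$ is both noetherian and artinian, so every object of $\cC$ has finite length. In particular $E$ admits a composition series
\[
0 = E_0 \subsetneq E_1 \subsetneq \cdots \subsetneq E_\ell = E
\]
in $\cC$ with simple quotients $E_i/E_{i-1}$. By the exactness of $\cC \hookrightarrow \Coh(X)$ these quotients are the usual sheaf quotients, and by the identification of simple objects each $E_i/E_{i-1}$ is stable with reduced Hilbert polynomial $p$; this is exactly a Jordan-H\"older filtration in the sense of \cite[Def.~1.5.1]{HL}.

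Uniqueness is then the classical Jordan-H\"older theorem, valid in any abelian category for objects of finite length: via Schreier refinement (the Zassenhaus lemma), any two composition series of $E$ in $\cC$ have the same length and the same simple factors up to isomorphism and reordering. Hence the associated graded sheaf $\mathrm{gr}(E) = \bigoplus_{i=1}^{\ell} E_i/E_{i-1}$ is, up to isomorphism, independent of the chosen filtration.

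I expect no genuine obstacle, as all the analytic and sheaf-theoretic content is already packaged in Proposition~\ref{prop:abelian_category}; the only steps needing care are the identification of the simple objects of $\cC$ with the stable sheaves of reduced Hilbert polynomial $p$ and the verification that sub- and quotient-objects in $\cC$ coincide with the sheaf-theoretic ones, which is what allows the abstract Jordan-H\"older theorem to yield the stated result about subsheaves with stable quotients.
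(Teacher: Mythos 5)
Your proof is correct and follows essentially the same route the paper intends: the paper leaves this as a standard consequence of Proposition~\ref{prop:abelian_category}, citing \cite[Prop.~1.5.2]{HL}, whose argument is exactly the one you give (finite length in the abelian, noetherian, artinian category $\Coh^{ss}(X,\omega,p)$ yields existence, and the classical Jordan--H\"older theorem yields uniqueness of the graded object). Your preliminary verifications --- that subobjects and quotients in this category agree with the sheaf-theoretic ones and that its simple objects are the stable sheaves of reduced Hilbert polynomial $p$ --- are precisely the points that make the abstract statement apply, so nothing is missing.
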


The derivation of the following result is less formal and requires deeper insight into the geometry of Douady spaces.

\begin{thm}[Openness of (semi)stability]\label{thm:openness-semistability}
 Let $(S,0)$ be a complex space germ and $\cE$  be a coherent sheaf on $X\times S$ that is flat over $S$. If the fibre of $\cE$ over $0\in S$ is (semi)stable, then the fibres of $\cE$ over any point in a neighbourhood of $0$ in $S$ are likewise (semi)stable. 
\end{thm}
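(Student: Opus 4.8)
The plan is to realise the locus of non-semistable fibres as the image of a proper part of the relative Douady space of $\cE$ over $S$, and thereby to show that it is closed. I first record the routine reductions. Since $\cE$ is flat and $S$ is a germ, I may shrink $S$ so that it is connected; then the Chern character $\ch(\cE_s)$, and hence the rank $r$ and the reduced Hilbert polynomial $p := p_{\cE_s}$ with respect to $\omega$, are independent of $s$, and I may also assume that every fibre $\cE_s$ is torsion-free, torsion-freeness being an open condition in flat families. A fibre $\cE_s$ fails to be semistable precisely when it admits a saturated subsheaf $0 \neq F \subsetneq \cE_s$ with $p_F > p$; I take $F$ to be the maximal $\omega$-destabilising subsheaf, which is $\omega$-semistable of rank in $\{1,\dots,r-1\}$, equivalently I record the associated torsion-free quotient $\cE_s/F$, whose reduced Hilbert polynomial satisfies $p_{\cE_s/F} < p$.

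The key --- and by far the hardest --- step will be to prove that, as $s$ ranges over a neighbourhood of $0$, these maximal destabilising subsheaves $F$ form a bounded family. Their rank lies in the finite set $\{1,\dots,r-1\}$, and their $\omega$-slope is confined to a bounded interval: it is bounded below by $\mu_\omega(\cE_s) = \mu$ and above by $\mu_{\max,\omega}(\cE_s)$, which is uniformly bounded because the fibres $\cE_s$ themselves form a bounded family. The Bogomolov-type inequality for $\omega$-semistable sheaves, applied to $F$ and to the quotient $\cE_s/F$ and combined with the fixed Chern data of the ambient $\cE_s$, then bounds the discriminant of $F$, and in particular $\int_X c_1(F)^2\,\omega^{\dim X - 2}$, from both sides. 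At this point I would invoke the Hodge index theorem, which makes the quadratic form $a \mapsto \int_X a^2\,\omega^{\dim X - 2}$ negative definite on the $\omega$-primitive part of $H^{1,1}(X,\R)$: together with the bound on $\int_X c_1(F)\,\omega^{\dim X - 1}$ coming from the slope, this confines $c_1(F)$ to a bounded region of $H^2(X,\R)$, which meets the lattice $H^2(X,\Z)$ in only finitely many classes. The analogous control of the higher Chern classes --- supplied by the boundedness theory underlying Proposition \ref{boundedness}, cf.\ \cite{GrebToma} --- leaves only finitely many Chern characters among the $F$, and Proposition \ref{boundedness} then bounds each resulting numerical type. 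This is the main obstacle, and it is exactly here that the possible irrationality of $[\omega]$ bites: boundedness of an $\omega$-slope alone does not discretise an integral cohomology class, and it is the interplay of the Bogomolov inequality with Hodge-theoretic positivity that restores the finiteness which, for an integral polarisation, would be automatic.

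With boundedness in hand the geometry of Douady spaces takes over. The bounded family of quotients $\cE_s/F$ is parametrised by finitely many connected components of the relative Douady space $\mathrm{Douady}(\cE/S)$, each of which is proper over $S$; along any such component the Hilbert polynomial of the universal quotient is locally constant, so the sublocus $\cD^{-}$ where this polynomial has reduced part $< p$ is a union of connected components and is therefore itself proper over $S$. Its image under the projection $\cD^{-} \to S$ is closed, and by the first paragraph it is precisely the set of $s$ for which $\cE_s$ is not semistable. Hence the semistable locus is open, which settles the semistable case.

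Finally, I would treat stability by the same mechanism. If $\cE_0$ is stable, the part already proved furnishes a neighbourhood of $0$ over which every fibre is at least semistable; on this neighbourhood a failure of stability at $s$ is witnessed by a saturated subsheaf $F \subsetneq \cE_s$ with $p_F = p$, the equality being forced by semistability. Such $F$ are $\omega$-semistable of reduced Hilbert polynomial $p$ and rank in $\{1,\dots,r-1\}$, hence bounded, and the corresponding locus of the Douady space where the universal quotient has reduced Hilbert polynomial equal to $p$ is again proper over $S$. Its image is the closed set of non-stable $s$, so the stable locus is open as well.
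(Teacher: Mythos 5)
Your overall skeleton is exactly the paper's (and the classical one of \cite[Prop.~2.3.1]{HL}): reduce failure of (semi)stability to the existence of a destabilising quotient, show that the relevant destabilising quotients form a bounded family, and exhibit the non-(semi)stable locus as the image in $S$ of a proper part of the relative Douady/Quot space, hence closed. The paper compresses this to two citations --- properness over $S$ of the degree-bounded part $D_S(\cE)_{\le b}$ of the relative Douady space (\cite[Cor.~5.3]{TomaLimitareaI} and \cite{TomaLimitareaII}) and \cite[Lemma~4.3]{TomaLimitareaI} as the substitute for Grothendieck's Lemma --- and you correctly identify the boundedness of the destabilisers as the crux and correctly isolate why irrationality of $[\omega]$ is the obstruction.

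However, your boundedness argument has a genuine gap at the higher Chern classes. The Hodge-index mechanism you invoke works only in degree two: the form $a\mapsto \int_X a^2\,\omega^{\dim X-2}$ is negative definite on $\omega$-primitive $(1,1)$-classes, so a two-sided bound on $\int_X c_1(F)^2\,\omega^{\dim X-2}$ together with the slope bound does confine $c_1(F)$ to a compact region and hence to finitely many integral classes. There is no ``analogous control'' on $H^{2,2}$ and beyond: a bound on $\int_X c_2(F)\,\omega^{\dim X-2}$ does not discretise $c_2(F)$, so ``only finitely many Chern characters among the $F$'' is not established, and Proposition~\ref{boundedness} --- which requires all $c_i$ to be fixed --- cannot be applied as you propose. (A smaller slip: you apply the Bogomolov inequality to $\cE_s/F$, but the quotient by the maximal destabilising subsheaf is not semistable in general, so the estimate must be run over the full Harder--Narasimhan filtration, with the Hodge-index bound absorbing the cross terms.) The gap is bridgeable, but by a different move from the one you describe: once $c_1(F)$ is confined to finitely many classes, the degree of $\cE_s/F$ with respect to an auxiliary ample class $\cO_X(1)$ is bounded above, and the classical Grothendieck Lemma for that ample class bounds the family of saturated quotients with no need to control higher Chern classes. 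This (or the direct citation of \cite[Lemma~4.3]{TomaLimitareaI}, which is what the paper does) is what actually closes the argument; the remainder of your proof, including the reduction of the stable case to the semistable one, then goes through.
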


\begin{proof}
The proof of \cite[Corollary 5.3]{TomaLimitareaI} immediately adapts to our situation to show that the relative Douady space $D_S(\cE)_{\le b}$  of quotients of $\cE$ with
degrees bounded from above by $b$ is proper over $S$; details will appear in \cite{TomaLimitareaII}. Using this as well as \cite[Lemma 4.3]{TomaLimitareaI} to replace  Grothendieck's Lemma, we may then prove openness of (semi)stability as in the classical case of ample polarisations, as presented for example in~\cite[Proposition 2.3.1]{HL}. 
\end{proof}

\subsection{Semistable sheaves of rank two}\label{subsect:rank2}

The next result gives a classification of semistable sheaves of rank two on a fixed projective manifold $X$ that is endowed with a given K\"ahler form $\omega$ and computes the automorphism group for all the resulting classes.

\begin{prop}[Classification of semistable sheaves]\label{prop:ss sheaves}
Any semistable sheaf $E$ of rank $2$ on $X$ falls into exactly one of the following classes:  
\begin{enumerate}[leftmargin=*, font=\bfseries ]
\item[\emph{(1)}] {\bf Polystable sheaves}
\begin{enumerate}
\item[\emph{(a)}] {\em Stable sheaves.} In this case $\Aut(E)\cong\C^*$. 
\item[\emph{(b)}] {\em Decomposable sheaves of the form $L_1\oplus L_2$ with $L_1\ncong L_2$ and $P_{L_1}=P_{L_2}$}. In this case $\Aut(E)\cong\C^*\times \C^*$, and $\Hom(L_1,E)$,  $\Hom(E, L_1)$, $\Hom(E,L_2)$, as well as $\Hom(L_2,E)$ are one-dimensional.
\item[\emph{(c)}] {\em Decomposable sheaves of the form $L\oplus L$}. In this case $\Aut(E)\cong\GL(2,\C)$ and $\Hom(L,E)$,  $\Hom(E, L)$ are two-dimensional.
\end{enumerate}
\item[\emph{(2)}] {\bf Non-polystable sheaves}
\begin{enumerate}
\item[\emph{(a)}] {\em Centres of non-trivial extensions of the form $0\to L_1\to E\to L_2\to 0$ with $L_1\ncong L_2$ and $P_{L_1}=P_{L_2}$}. In this case, we have $\Aut(E)\cong\C^*$, $\Hom(L_1,E)\cong\C$,  $\Hom(E, L_1)=0$, $\Hom(E,L_2)\cong\C$, and  $\Hom(L_2,E)=0$.
\item[\emph{(b)}] {\em Centres of non-trivial extensions of the form $0\to L\stackrel{\alpha}\rightarrow E\stackrel{\beta}\rightarrow L\to0$.} In this case $\Aut(E)=\{ a\cdot \Id_E+c\cdot\alpha\circ\beta \ | \ a\in\C^*, \ c\in\C\}\cong\C^*\times\C$, $\Hom(L,E)\cong\C$ and $\Hom(E,L)\cong\C$.
\end{enumerate}
\end{enumerate}
In all cases listed above, $L_1$, $L_2$, $L$ are torsion-free sheaves of rank one on $X$. 
\end{prop}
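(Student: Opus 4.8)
The plan is to reduce, via the Jordan--H\"older theory of Proposition~\ref{prop:JH}, every rank two semistable sheaf to an extension of two rank one sheaves, and then to extract all automorphism groups and $\Hom$-spaces from the associated long exact $\Hom$/$\Ext$-sequences together with Lemma~\ref{lem:morphisms}. Two elementary facts about rank one torsion-free sheaves will be used repeatedly: first, every such sheaf $L$ is stable and hence simple, so $\Hom(L,L)\cong\C$; second, if $L\ncong L'$ are rank one with $p_L=p_{L'}$, then $\Hom(L,L')=0$, since by Lemma~\ref{lem:morphisms} a nonzero map would have rank one image isomorphic to both $L$ and $L'$, a contradiction.

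First I would dispose of the stable case: a stable $E$ is simple, so $\End(E)=\C$ and $\Aut(E)\cong\C^*$, giving~(1a). If instead $E$ is properly semistable, Proposition~\ref{prop:JH} yields a Jordan--H\"older filtration whose graded pieces are stable with reduced Hilbert polynomial $p_E$; as $\rk E=2$, there are exactly two such pieces $L_1,L_2$, both of rank one with $P_{L_1}=P_{L_2}=p_E$. The filtration presents $E$ as an extension $0\to L_1\overset{\alpha}\to E\overset{\beta}\to L_2\to0$ with class $\xi\in\Ext^1(L_2,L_1)$, and uniqueness of the graded object makes the unordered pair $\{L_1,L_2\}$ an invariant of $E$. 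The four remaining classes then correspond to the two independent dichotomies $L_1\cong L_2$ versus $L_1\ncong L_2$ and $\xi=0$ (split, polystable) versus $\xi\neq0$ (non-split).

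For the $\Hom$-computations I would feed the defining sequence into $\Hom(L_i,-)$ and $\Hom(-,L_j)$, using the two facts above and the standard identity that the connecting map sends $\id_{L_i}$ to $\xi$, so it is injective exactly when $\xi\neq0$. In the split cases everything follows from the direct sum decomposition; for example $E=L\oplus L$ gives $\End(E)\cong M_2(\C)$, hence $\Aut(E)\cong\GL(2,\C)$, which is~(1c). In the non-split cases injectivity of the connecting map produces the stated one-dimensional or vanishing $\Hom$-spaces. To pin down $\Aut(E)$ in~(2a) I would use $\Hom(L_1,E)=\C\cdot\alpha$ to write $\phi\circ\alpha=a\,\alpha$, deduce that $\phi-a\,\id_E$ kills $L_1$ and hence factors through a map $L_2\to E$, and conclude from $\Hom(L_2,E)=0$ that $\phi=a\,\id_E$; thus $\End(E)=\C$. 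In~(2b) the same factoring through $\Hom(L,E)=\C\cdot\alpha$ gives $\phi=a\,\id_E+c\,\alpha\beta$, and since $(\alpha\beta)^2=0$ the invertible such endomorphisms form the group $\{a\,\id_E+c\,\alpha\beta\mid a\in\C^*,\,c\in\C\}\cong\C^*\times\C$.

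Exhaustiveness is immediate from the case split, while mutual exclusivity follows by comparing the computed invariants: stability isolates~(1a); the non-vanishing of $\Hom(L_2,E)$ (resp.\ the two-dimensionality of $\Hom(L,E)$) distinguishes the polystable class~(1b) from~(2a) (resp.~(1c) from~(2b)); and $L_1\cong L_2$ separates the remaining pairs. The main obstacle I anticipate lies in the endomorphism computations for the non-split classes~(2a) and~(2b): one must first establish that $\Hom(L_i,E)$ is one-dimensional --- a consequence of $\xi\neq0$ via the connecting map --- and then leverage this rigidity to determine $\End(E)$ exactly, correctly recognising the dual-numbers structure in~(2b) that yields $\Aut(E)\cong\C^*\times\C$.
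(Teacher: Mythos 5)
Your proposal is correct and follows essentially the same route as the paper: the classification is read off from the existence and uniqueness of Jordan--H\"older filtrations, and the $\Hom$-spaces and automorphism groups are extracted from the long exact sequences of the defining extension, with non-splitness (equivalently, injectivity of the connecting map on the one-dimensional space $\Hom(L_i,L_i)$) forcing the relevant vanishing. Your determination of $\Aut(E)$ in cases (2a) and (2b) -- writing $\phi\circ\alpha=a\,\alpha$ and factoring $\phi-a\,\id_E$ through the quotient -- is the same computation the paper performs after applying $\Hom(E,\cdot)$ to the extension.
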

\begin{proof}
The classification follows easily from the existence and uniqueness of Jordan-H\"older filtrations, see Proposition~\ref{prop:JH}. We will hence only compute the automorphism groups and the homomorphism groups here, relying mostly on Lemma~\ref{lem:morphisms}. The three cases listed under (1) are clear. To deal with the cases listed under (2), let $E$ be the centre of a non-trivial extension of the form
$$0\to L_1\stackrel{\alpha}\longrightarrow E\stackrel{\beta}\longrightarrow L_2\to0$$
with $P_{L_1}=P_{L_2}$. 

In case $L_1\ncong L_2$, using the fact that the extension is assumed to be non-split we immediately get $\Hom(L_1,E)\cong\C$,  $\Hom(E, L_1)=0$, $\Hom(E,L_2)\cong\C$, and $\Hom(L_2,E)=0$. Applying now $\Hom(E,\cdot)$ to the defining exact sequence of $E$ we obtain $\Hom(E,E)\cong\C$, hence $\Aut(E)\cong\C^*$. 

Suppose now that $L_1\cong L_2=:L$. Let $\sigma\in\Hom(E,L)$. Then, $\sigma\circ\alpha=0$, otherwise $\sigma$ would be a retraction of $\alpha$, contradicting the assumption that the extension is non-split. Consequently, $\sigma$ factorizes through $\beta$, i.e. $\sigma=c\beta$ for some $c\in\C$. In particular, $\Hom(E, L)\cong\C$. Similarly we get $\Hom(L,E)\cong\C$.  Applying as before $\Hom(E,\cdot)$ to the defining exact sequence of $E$, we get
$$0\to\Hom(E,L)\stackrel{\alpha\circ \cdot}\longrightarrow\Hom(E, E)\stackrel{\beta\circ\cdot}\longrightarrow\Hom(E, L).$$
The image of an element $\phi$ in  $\Hom(E, E)$ through 
the map $\Hom(E, E)\stackrel{\beta\circ\cdot}\longrightarrow\Hom(E, L)$ is of the form $a\beta$ for some 
$a\in\C$, with $a\neq0$ if $\phi\in\Aut(E)$. 
With this notation $\beta\circ(\phi-a\Id_E)=0$, hence $\phi-a\Id_E=\alpha\circ c\beta=c\cdot \alpha\circ\beta$ and the desired description of $\Aut(E)$ follows.
\end{proof}

\begin{cor}\label{cor:extensions}
 Up to a multiplicative constant every properly semistable sheaf $E$ of rank $2$ on $X$ gives rise to a unique extension 
$$ 0\to L_1\to E\to L_2\to 0,$$
with rank one torsion free sheaves  $L_1$, $L_2$ on $X$ such that $P_{L_1}=P_{L_2}$.
\end{cor}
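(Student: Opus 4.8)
The plan is to read the statement off the classification in Proposition~\ref{prop:ss sheaves}, using the uniqueness of Jordan--H\"older factors (Proposition~\ref{prop:JH}) and the rigidity of morphisms between semistable sheaves of equal reduced Hilbert polynomial (Lemma~\ref{lem:morphisms}). Existence is the easy half: since $E$ is properly semistable it is not stable, so any Jordan--H\"older filtration has length at least two; because $\rank E=2$ and every graded piece is stable of rank at least one, the filtration must be $0\subsetneq L_1\subsetneq E$ with $L_1$ and $L_2:=E/L_1$ stable torsion-free of rank one and $p_{L_1}=p_{L_2}=p_E$. As the $L_i$ have rank one we have $P_{L_i}=p_{L_i}$, so these equalities read $P_{L_1}=P_{L_2}$, and the filtration itself is the desired extension $0\to L_1\to E\to L_2\to 0$.

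For uniqueness up to a multiplicative constant, I would take two such presentations $0\to L_1\xrightarrow{\alpha} E\xrightarrow{\beta} L_2\to 0$ and $0\to L_1'\xrightarrow{\alpha'} E\xrightarrow{\beta'} L_2'\to 0$. Since $P_{L_i'}=\tfrac{1}{2}P_E$, both $0\subset L_1'\subset E$ and $0\subset L_1\subset E$ are Jordan--H\"older filtrations, so Proposition~\ref{prop:JH} yields $\{L_1,L_2\}\cong\{L_1',L_2'\}$ as unordered pairs. The key move is to examine the composite $\beta\circ\alpha'\colon L_1'\to L_2$: by Lemma~\ref{lem:morphisms} it is either zero or an isomorphism. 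If it is an isomorphism, then $\alpha'\circ(\beta\circ\alpha')^{-1}$ is a section of $\beta$, so the first extension splits and $E$ is decomposable. If it is zero, then $\alpha'$ factors through $\ker\beta=\alpha(L_1)$ via an isomorphism $L_1'\cong L_1$, whence $\alpha'(L_1')=\alpha(L_1)$ is the same subsheaf of $E$; dually $\beta'$ and $\beta$ agree up to an isomorphism of $L_2$. Thus in the non-split case the sub-line-bundle, the quotient, and hence the extension class $\xi\in\E(L_2,L_1)$ are all determined by $E$ up to rescaling $\alpha$ and $\beta$, that is, up to a multiplicative constant.

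The main obstacle is the genuinely non-unique behaviour in the decomposable (polystable) cases \emph{(1)(b)} and \emph{(1)(c)}, where the sub-line-bundles with the correct Hilbert polynomial are not unique --- there are two of them when $E\cong L_1\oplus L_2$ with $L_1\ncong L_2$, and a whole $\P^1$-family when $E\cong L\oplus L$, in line with $\Hom(L,E)$ being two-dimensional in \emph{(1)(c)}. Here strict uniqueness of the inclusion fails, and the point is precisely that the dichotomy above forces every such presentation to be split: whenever a second presentation is not a rescaling of the first, $\beta\circ\alpha'$ is an isomorphism and the extension class vanishes. Consequently the extension class is still unique up to a multiplicative constant, namely the zero class, attached to the unordered Jordan--H\"older pair supplied by Proposition~\ref{prop:JH}. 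Making the bookkeeping of this splitting alternative precise --- and checking, via the $\Hom$-computations of Proposition~\ref{prop:ss sheaves} (notably $\Hom(L_2,E)=0$ in \emph{(2)(a)}), that the non-split cases admit no competing sub-object --- is the step I expect to require the most care.
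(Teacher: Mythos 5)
Your argument is correct and matches the paper's (implicit) reasoning: the corollary is stated there without proof as an immediate consequence of Proposition~\ref{prop:ss sheaves}, whose $\Hom$-group computations (e.g.\ $\Hom(L_2,E)=0$ and $\Hom(L_1,E)\cong\C$ in case (2)(a)) encode exactly your zero-or-isomorphism dichotomy for $\beta\circ\alpha'$ derived from Lemma~\ref{lem:morphisms} and Proposition~\ref{prop:JH}. Your explicit treatment of the decomposable cases --- where only the (zero) extension class, not the subsheaf, is unique --- is the right reading of ``unique up to a multiplicative constant'' and consistent with how the corollary is later invoked for non-polystable sheaves.
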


\subsection{Basic geometric properties of the stack of semistable sheaves} \label{subsect:quotient_presentations} 

We consider the stack $\cX:=\cC oh_{(X,\omega),\tau}^{ss}$ of semistable sheaves  on $(X,\omega)$ with fixed rank and Chern classes; the latter data will be collected in a vector $\tau=(r, c_1, \dots, c_{2\dim X})$, which we call the \emph{type} of the sheaves under consideration. This is an algebraic stack locally of finite type over $\C$ since it satisfies Artin's axioms \cite[Theorem 2.20]{Alper}; see also \cite[Theorem 2.19]{AlperHR}. See also the beginning of Section~\ref{sect:construction} for more information on the basic properties of $\cX$.

\subsubsection{Quotient stack realisation}\label{subsubsect:quotient_realisation}

The stack $\cX$ may be realized as a quotient stack in the sense of \cite[Definition 3.1]{Alper} in the usual way; we quickly recall the construction, which is explained for example in  \cite[Sect.~4.3]{HL}: choose an ample line bundle $\cO_X(1)$ and an integer $m$ such that all semistable sheaves (with respect to $\omega$) with fixed rank and Chern classes $\tau$ on $X$ are $m$-regular with respect to $\cO_X(1)$. 
This is possible since we have boundedness for such sheaves by Proposition \ref{boundedness}. Since the rank and the Chern classes of the sheaves $F$ under consideration are fixed, by $m$-regularity and Riemann-Roch we obtain that $h^0(F(m))$ is constant, equal to $N \in \mathbb{N}$. 
Setting $V:=\C^N$, $\cH:=V\otimes_\C\cO_X(-m)$, we obtain for any $F$ as above an epimorphism of $\cO_X$-modules 
$\rho:\cH\to F$
as soon as we have fixed an isomorphism $V\to H^0(F(m))$.
Moreover, the induced map $H^0(\rho(m)):H^0(\cH(m))\to H^0(F(m))$ is bijective. We thus get a point $[\rho:\cH\to F]$ in the open (quasi-projective) subscheme $R$ of $\Quot_\cH$ of semistable quotients $F$ of $\cH$ with type $\tau$ that induce isomorphisms at the level of $H^0(\rho(m)):H^0(\cH(m))\to H^0(F(m))$. The natural action of the linear group $G:=\GL(V)$ on $V$ induces an action on $\Quot_\cH$ leaving the open subset $R$ invariant. Let $\cF$ be the universal quotient sheaf restricted to $X\times R$. It is a $G$-sheaf and it allows to define an isomorphism from the quotient stack $[R/G]$ to $\cX$. Indeed, an object of  $[R/G]$ 
is a triple $(T,\pi:P\to T, f:P\to R)$, where $T$ is a scheme, $\pi$ is a principal $G$-bundle and $f$ is a $G$-equivariant morphism. Then the $G$-sheaf obtained from $\cF$ by pullback to $X\times P$ gives a flat family of semistable sheaves on $X$ parametrised by $T$ and thus an object of $\cX$. 
Conversely if $\cE$ is a flat family of semistable sheaves of type $\tau$ on $X$ parametrised by a scheme $S$, then  as in the proof of \cite[Lemma 4.3.1]{HL} the frame bundle $R(\cE(m))$ associated to it gives an object $(S,R(\cE(m))\to S,R(\cE(m))\to R)$ of $[R/G]$. 

In the subsequent discussion, we will use the following notation: If $G$ is an algebraic group and $X$ is a $G$-scheme, then for $x \in X(\mathbb{C})$ we denote by $[x]_G$ the image of $x$ under the morphism $X \to [X/G]$. We will also use the same notion for the associated points in the corresponding topological spaces $|X|$ and $|[X / G]|$.

\subsubsection{Closed points and closures of points} We will characterise closed points in terms of polystability and show that polystable degenerations are unique. Grauert semicontinuity, see \cite[Prop.~III.12.8]{Hartshorne},  is the key principle at work here. With a view towards the discussion carried out in subsequent parts of the paper we will restrict ourselves to the case of coherent sheaves having rank two.
\begin{prop}[Characterising closed points]\label{prop:characterising_closed_points}
Let $z \in R$ be a closed point. Then, the following are equivalent.
\begin{enumerate}
 \item The point $[z]_G \in  |[R/G]| \cong |\cX|$ is closed.
 \item The $G$-orbit $G\acts z \subset R$ is closed.
 \item The sheaf $\cF_z$ is polystable.
\end{enumerate}
\end{prop}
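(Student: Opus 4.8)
The plan is to prove the cycle by separating the formal, stack-theoretic equivalence (1)$\Leftrightarrow$(2) from the geometric content (2)$\Leftrightarrow$(3). For (1)$\Leftrightarrow$(2) I would use that the continuous surjection $|R|\to|[R/G]|$ realises $|[R/G]|$ as the quotient of $|R|$ by the $G$-orbit equivalence relation and is a topological quotient map, so that a subset of $|[R/G]|$ is closed precisely when its preimage in $|R|$ is. Since the preimage of $\{[z]_G\}$ is the orbit $G\acts z$ and the preimage of its closure is the (automatically $G$-invariant) set $\overline{G\acts z}$, the point $[z]_G$ is closed if and only if $\overline{G\acts z}=G\acts z$, i.e.\ the orbit is closed. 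This step is routine once one invokes the standard description of the underlying topological space of a quotient stack.

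For (2)$\Rightarrow$(3) I would argue by contraposition. Assume $\cF_z$ is not polystable. By Corollary~\ref{cor:extensions} (together with Proposition~\ref{prop:ss sheaves}) it is the centre of a non-split extension $0\to L_1\to \cF_z\to L_2\to 0$ of rank-one torsion-free sheaves with $P_{L_1}=P_{L_2}$, classified by a nonzero $\xi\in W:=\E(L_2,L_1)$. I would then restrict the universal extension over $X\times W$ from Section~\ref{subsect:sheafextensions} to the line $\C\xi\cong\mathbb{A}^1$: since scaling the extension class by $t\in\C^*$ produces an isomorphic middle term (Remark~\ref{remark:extensions}), every fibre over $\C^*\xi$ is isomorphic to $\cF_z$, while the central fibre is the split sheaf $L_1\oplus L_2$. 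All these sheaves are semistable of type $\tau$, hence $m$-regular by boundedness (Proposition~\ref{boundedness}), so the relevant twisted pushforward is locally free over $\mathbb{A}^1$; trivialising it yields a morphism $\mathbb{A}^1\to R$ whose restriction to $\C^*$ lands in $G\acts z$ and which sends $0$ to a point $z_0$ with $\cF_{z_0}\cong L_1\oplus L_2\not\cong\cF_z$. Thus $z_0\in\overline{G\acts z}\setminus G\acts z$ and the orbit is not closed.

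The implication (3)$\Rightarrow$(2) is the heart of the matter, and is where Grauert semicontinuity enters. Assume $\cF_z$ is polystable and write $\cF_z\cong\bigoplus_i E_i^{\oplus n_i}$ with the $E_i$ pairwise non-isomorphic stable sheaves of the common reduced Hilbert polynomial $p$; for rank two the possibilities are a single stable sheaf, $L_1\oplus L_2$ with $L_1\ncong L_2$, or $L\oplus L$. Suppose for contradiction that the orbit is not closed and pick a closed point $z_0\in\overline{G\acts z}\setminus G\acts z$. By a standard curve selection argument there is a smooth pointed curve $(C,0)$ and a morphism $C\to\overline{G\acts z}\subseteq R$ sending $C\setminus\{0\}$ into $G\acts z$ and $0$ to $z_0$; pulling back the universal sheaf produces a flat family $\cG$ over $C$ with $\cG_c\cong\cF_z$ for $c\neq0$ and $\cG_0\cong\cF_{z_0}=:E_0$. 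Upper semicontinuity of $c\mapsto\dim\Hom(E_i,\cG_c)$ then gives $\dim\Hom(E_i,E_0)\ge\dim\Hom(E_i,\cF_z)=n_i$, providing $n_i$ linearly independent morphisms $E_i\to E_0$ for each $i$, which I would assemble into a single morphism $\Phi\colon\cF_z\to E_0$.

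It remains to show that $\Phi$ is an isomorphism, and this is the step requiring the most care. By Lemma~\ref{lem:morphisms} and the stability of the $E_i$, every nonzero component $E_i\to E_0$ is injective with image a stable subsheaf of reduced Hilbert polynomial $p$; if the image of $\Phi$ failed to have full rank two, then all chosen maps out of a single factor would factor through one rank-one stable image and hence be proportional, contradicting their independence, while maps out of two distinct factors would yield a nonzero morphism between non-isomorphic stable sheaves of the same $p$, impossible by Lemma~\ref{lem:morphisms} and Proposition~\ref{prop:ss sheaves}. Hence $\Phi$ is generically of rank two, so injective (its kernel is a rank-zero subsheaf of the torsion-free $\cF_z$), and since source and target are torsion-free of rank two with the same type $\tau$, and therefore the same Hilbert polynomial, the cokernel of $\Phi$ has vanishing Hilbert polynomial and so vanishes. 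Thus $\Phi$ is an isomorphism, $\cF_{z_0}\cong\cF_z$, whence $z_0\in G\acts z$, contradicting the choice of $z_0$; the orbit is therefore closed. The main obstacle throughout is exactly this homomorphism bookkeeping: the semicontinuity estimates bound the number of maps only from below, and it is Proposition~\ref{prop:ss sheaves} together with the image analysis of Lemma~\ref{lem:morphisms} that prevents them from collapsing onto a proper stable subsheaf of $E_0$ instead of reconstructing all of $\cF_z$.
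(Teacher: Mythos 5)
Your proposal is correct and follows essentially the same route as the paper: the equivalence of (1) and (2) from the definition of the quotient stack topology, the implication towards (3) by contraposition via the degeneration of a non-split extension to $L_1\oplus L_2$ over the affine line, and the closedness of polystable orbits via a Grauert semicontinuity argument in the style of \cite[Lem.~4.7]{Gieseker}, which is exactly the reference the paper invokes for this step. The only difference is one of exposition: you spell out the semicontinuity bookkeeping and the verification that the assembled morphism $\Phi\colon\cF_z\to E_0$ is an isomorphism, where the paper simply cites Gieseker.
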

\begin{proof}
The equivalence ``(1) $\Leftrightarrow$ (2)'' follows directly from the definitions. In order to show ``(3) $\Rightarrow$ (2)/(1)'', assume that $\cF_z$ is properly semistable.  Then, $\cF_z$ can be realised as a non-trivial extension $0 \to L_1 \to \cF_z \to L_2 \to 0$, see Proposition~\ref{prop:ss sheaves}. Consequently, $\cF_z$ degenerates to $L_1 \oplus L_2$ over the affine line, and therefore does not give a closed point of $\cX$. It remains to show that orbits of polystable sheaves are closed. This however follows from a Grauert semicontinuity argument completely analogous to \cite[Lem.~4.7]{Gieseker}. 
\end{proof}
The following now is a consequence of Proposition~\ref{prop:ss sheaves}.
\begin{cor}\label{cor:reductive_stabiliser}
 Every closed point of $\cX$ has linearly reductive stabiliser. 
\end{cor}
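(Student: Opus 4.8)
The plan is to combine the classification of automorphism groups from Proposition~\ref{prop:ss sheaves} with the characterisation of closed points from Proposition~\ref{prop:characterising_closed_points}. The statement asserts that every \emph{closed} point of $\cX$ has linearly reductive stabiliser, so by Proposition~\ref{prop:characterising_closed_points} it suffices to understand the stabiliser groups at closed points, which are precisely those points $[z]_G$ for which $\cF_z$ is polystable. First I would recall that for the quotient stack presentation $\cX\cong[R/G]$, the stabiliser in $\cX$ of a point $[z]_G$ is, up to the scalars acting trivially, identified with the automorphism group $\Aut(\cF_z)$ of the corresponding sheaf; more precisely, the stabiliser $G_z\subset G=\GL(V)$ of a point $z\in R$ is isomorphic to $\Aut(\cF_z)$ via the action on $H^0(\cF_z(m))\cong V$ induced by the $m$-regularity set-up. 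This is the standard identification in the GIT construction of sheaf moduli, and it reduces the claim to checking that $\Aut(\cF_z)$ is linearly reductive whenever $\cF_z$ is polystable.

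Next I would simply read off the relevant automorphism groups from the polystable cases (1a), (1b), (1c) of Proposition~\ref{prop:ss sheaves}. In case (1a), $\cF_z$ is stable and $\Aut(\cF_z)\cong\C^*$; in case (1b), $\cF_z\cong L_1\oplus L_2$ with $L_1\ncong L_2$ and $\Aut(\cF_z)\cong\C^*\times\C^*$; in case (1c), $\cF_z\cong L\oplus L$ and $\Aut(\cF_z)\cong\GL(2,\C)$. Each of these three groups is a (product of) reductive algebraic group(s): $\C^*$ is a torus, $\C^*\times\C^*$ is a torus, and $\GL(2,\C)$ is reductive. Since reductive groups in characteristic zero are linearly reductive, the stabiliser is linearly reductive in every polystable case. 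I would state explicitly that we work over $\C$, so that linear reductivity is equivalent to reductivity, which is what makes the argument immediate.

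The contrast to keep in mind, and which explains why the word ``closed'' is essential, is that the non-polystable cases (2a) and (2b) produce non-reductive automorphism groups: in case (2a) one has $\Aut(\cF_z)\cong\C^*$, which is reductive, but in case (2b) one has $\Aut(\cF_z)\cong\C^*\times\C$, whose unipotent factor $\C$ prevents it from being reductive. These properly semistable points are exactly the non-closed points excluded by Proposition~\ref{prop:characterising_closed_points}, so the restriction to closed points is precisely what rules out the problematic case (2b).

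The main obstacle I anticipate is not the reductivity check itself—that is essentially a glance at the list—but rather pinning down the precise isomorphism between the scheme-theoretic stabiliser $G_z$ and $\Aut(\cF_z)$, and making sure this identification respects the group structure so that reductivity transfers. One must verify that an element of $G=\GL(V)$ fixing $z\in R$ corresponds, under the identification $V\cong H^0(\cF_z(m))$ coming from the $m$-regular presentation $\rho:\cH\to\cF_z$, to an automorphism of the sheaf $\cF_z$, and that this correspondence is an isomorphism of algebraic groups; the scalars $\C^*\subset G$ acting trivially on the quotient match the scalar automorphisms, so the induced stabiliser in the stack is the full $\Aut(\cF_z)$. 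Once this identification is in place, the conclusion follows immediately by inspecting the polystable entries of Proposition~\ref{prop:ss sheaves}.
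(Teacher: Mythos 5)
Your proof is correct and follows essentially the same route as the paper, which simply records the corollary as an immediate consequence of the classification in Proposition~\ref{prop:ss sheaves} together with the identification of closed points with polystable sheaves from Proposition~\ref{prop:characterising_closed_points}. The additional care you take with the identification $G_z\cong\Aut(\cF_z)$ and the remark that only case (2b) is genuinely non-reductive are accurate but not needed beyond what the paper already sets up.
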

Next, we look at closures of non-closed points.
\begin{prop}[Uniqueness of polystable degenerations]
 For any $\C$-point $y$ of $\cX$, there exists a unique closed point in $\overline{\{y\}} \subset |\cX|$.
\end{prop}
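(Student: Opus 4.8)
The plan is to work throughout in the quotient presentation $|\cX| \cong |[R/G]|$ from Section~\ref{subsubsect:quotient_realisation} and to translate everything into the language of $\omega$-semistable sheaves. I write $y = [z]_G$ with $z \in R$ a point whose associated sheaf $E := \cF_z$ is semistable of type $\tau$; recall from Section~\ref{subsubsect:quotient_realisation} that two points of $R$ lie in the same $G$-orbit precisely when the associated sheaves are isomorphic, and that by Proposition~\ref{prop:characterising_closed_points} a point $[z']_G$ of $\overline{\{y\}}$ is closed if and only if $\cF_{z'}$ is polystable. Thus the assertion is equivalent to saying that, among all polystable sheaves arising as degenerations of $E$ (i.e.\ as $\cF_{z'}$ with $G\acts z' \subset \overline{G\acts z}$), there is exactly one isomorphism class, and I claim it is the associated-graded sheaf $\mathrm{gr}(E)$ of a Jordan--H\"older filtration of $E$, which is well defined by Proposition~\ref{prop:JH}.

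For existence I distinguish two cases. If $E$ is already polystable then $y$ is itself closed by Proposition~\ref{prop:characterising_closed_points}, and $\overline{\{y\}} = \{y\}$. If $E$ is properly semistable, Corollary~\ref{cor:extensions} presents it as a non-split extension $0 \to L_1 \to E \to L_2 \to 0$ with $P_{L_1} = P_{L_2}$ and $L_1, L_2$ of rank one; scaling the extension class $\xi \in \E(L_2,L_1)$ by $t \in \mathbb{A}^1$ (equivalently, restricting the universal extension of Section~\ref{subsect:sheafextensions} to the line $\mathbb{C}\xi \subset \E(L_2,L_1)$) yields a flat family over $\mathbb{A}^1$ whose fibre is $E$ for $t \neq 0$ and $L_1 \oplus L_2$ for $t = 0$. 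All fibres are semistable of type $\tau$, so this is a morphism $\mathbb{A}^1 \to \cX$ exhibiting $[L_1\oplus L_2]_G = [\mathrm{gr}(E)]_G$ as a point of $\overline{\{y\}}$, and it is closed since $\mathrm{gr}(E)$ is polystable.

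For uniqueness let $E'$ be any polystable sheaf with $[E']_G \in \overline{\{y\}}$. Choosing a smooth pointed curve $(C,0)$ mapping to $R$ with the image of $C \setminus \{0\}$ inside $G\acts z$ and $0 \mapsto z'$, I obtain a flat family $\cE$ on $X \times C$ with $\cE_c \cong E$ for $c \neq 0$ and $\cE_0 \cong E'$. Spreading out the rank-one Jordan--H\"older subsheaf of the generic fibre and taking its saturation inside $\cE$ produces a short exact sequence $0 \to \cL_1 \to \cE \to \cL_2 \to 0$ with $\cL_1, \cL_2$ flat of rank one over $C$; restricting to $X \times \{0\}$ stays exact and realises $0 \to (\cL_1)_0 \to E' \to (\cL_2)_0 \to 0$ as a Jordan--H\"older filtration of $E'$. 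Since $E'$ is polystable this forces $E' \cong (\cL_1)_0 \oplus (\cL_2)_0$. On the other hand, for $c \neq 0$ the subsheaf $(\cL_1)_c \subset \cE_c \cong E$ is a saturated rank-one subsheaf of reduced Hilbert polynomial $p_E$, so by Lemma~\ref{lem:morphisms} and the uniqueness part of Proposition~\ref{prop:JH} the unordered pair $\{(\cL_1)_c, (\cL_2)_c\}$ agrees with $\{L_1, L_2\}$ up to isomorphism. Rank-one torsion-free sheaves are stable for every polarisation, and their moduli space with fixed Chern classes is the classical, separated Gieseker moduli space attached to the auxiliary ample bundle $\cO_X(1)$; the moduli maps $C \to M$ associated with $\cL_1, \cL_2$ are therefore constant on the dense open $C \setminus \{0\}$ and hence everywhere, giving $(\cL_i)_0 \cong L_i$. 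Thus $E' \cong L_1 \oplus L_2 \cong \mathrm{gr}(E)$, which proves uniqueness.

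I expect the delicate point to be the uniqueness step, and within it the control of the special fibre: merely exhibiting $E'$ as a flat limit of $E$ is not enough, because the rank-one graded pieces could a priori degenerate further. The two ingredients that rule this out are the flat spreading-out and saturation of the Jordan--H\"older subsheaf (so that the induced filtration of $E'$ has the expected Hilbert polynomials and its restriction to $X \times \{0\}$ remains exact) and the separatedness of the moduli of rank-one sheaves (so that $(\cL_i)_0 \cong L_i$ rather than some other degeneration of $L_i$). The book-keeping around the saturation and its flatness over $C$ is likely the most technical part, while the conceptual heart is that a semistable degeneration can reshuffle, but never alter, the Jordan--H\"older graded sheaf.
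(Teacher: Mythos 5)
Your proof is correct in outline but takes a genuinely different route from the paper. The paper's argument is much shorter: having noted that $\mathrm{gr}^{JH}(\cF_z)$ obviously lies in $\overline{\{y\}}$, it pins down any other polystable limit $\cE$ by applying Grauert semicontinuity to the families of vector spaces $\Hom(\mathrm{gr}^{JH}(\cF_z),-)$ and $\Hom(-,\mathrm{gr}^{JH}(\cF_z))$ along the degeneration, obtaining $\dim\Hom(\mathrm{gr}^{JH}(\cF_z),\cE)\ge\dim\Hom(\mathrm{gr}^{JH}(\cF_z),\cF_z)$ and its dual inequality, and then reading off $\cE\cong\mathrm{gr}^{JH}(\cF_z)$ from the $\Hom$-computations of Proposition~\ref{prop:ss sheaves}. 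You instead degenerate along a curve, extend and saturate the Jordan--H\"older subsheaf, and conclude via the coarse moduli space of rank-one sheaves --- a Langton-type elementary-transformation argument very much in the spirit of the paper's own Proposition~\ref{prop:separation}. Both work, but the semicontinuity route avoids the two technical points you rightly flag: (i) the family over $C\setminus\{0\}$ is only fibrewise isomorphic to $E$, so to spread out $L_1$ over the generic fibre you must either use that the destabilising subsheaf is unique up to scalar (Corollary~\ref{cor:extensions}) and hence descends, or pass to a finite cover of $C$ killing the twist; and (ii) one must check that the saturated quotient $\cL_2$ is $C$-flat with torsion-free special fibre, which follows from torsion-freeness of $\cL_2$ over the smooth curve together with semistability of $E'$. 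Your route has the merit of identifying the limit without computing automorphism and $\Hom$ groups case by case, at the cost of invoking the existence and separation properties of $\cM^s(X,P)$, which the paper does not need at this point.
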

\begin{proof}
 If $\cF_z$ is polystable, by Proposition~\ref{prop:characterising_closed_points} the corresponding point $y = [z]_G$ is closed, so there is nothing to show.
 
 If $\cF_z$ is properly semistable, then clearly the closed point corresponding to the polystable sheaf $\mathrm{gr}^{JH}(\cF_z)$ lies in $\overline{\{[z]_G\}}$. Suppose that there is another closed point $x$ of $|\cX|$ lying in $\overline{\{[z]_G\}}$, and let $\cE$ be a polystable sheaf representing $x$. Grauert semicontinuity then implies that 
 \begin{align*}
  \dim_\C \Hom(\mathrm{gr}^{JH}(\cF_z), \cE) &\geq  \dim_\C\Hom(\mathrm{gr}^{JH}(\cF_z), \cF_z) \quad \text{ and}\\
  \dim_\C \Hom(\cE, \mathrm{gr}^{JH}(\cF_z)) &\geq \dim_\C\Hom(\cF_z, \mathrm{gr}^{JH}(\cF_z)),
 \end{align*}
 from which we quickly deduce that the polystable sheaf $\cE$ has to be isomorphic to $\mathrm{gr}^{JH}(\cF_z)$.
\end{proof}

\subsubsection{Slices and local quotient presentations}
We note that by construction $R$ admits $G$-equivariant locally closed embeddings into the projective spaces associated with finite-dimensional complex $G$-representations, arising from  natural $G$-linearised ample line bundles on the Quot-scheme induced by $\cO_X(1)$, see \cite[p.~101]{HL}. This fact will be used in the proof of the subsequent result, which provides rather explicit local quotient presentations for the stack $\cX$. We continue to use the notation established in Section~\ref{subsubsect:quotient_realisation}.

\begin{prop}[Local quotient presentation induced by slice]\label{prop:slice}
 Let $E$ be a semi\-stable sheaf on $X$ corresponding to a closed point $x \in \cX (\mathbb{C})$.  Let $s \in R$ project to the closed point $[s]_{G} \in [R/G]$ that is mapped to $x$ by the isomorphism $[R/G] \cong \cX$ established above. Then, there exists a $G_s$-invariant, locally closed, affine subscheme $S$ in $R$ with $s \in S$ such that $T_sR = T_sS \oplus T_s(G\cdot s)$, such that the morphism $G \times S \to R$ is smooth, and such that the induced morphism $f: [S/G_s] \to \cX$ is \'etale and affine, maps the point $0 :=[s]_{G_s} \in [S/G_s] (\mathbb{C})$ to $x$, and induces an isomorphism of stabiliser groups $G_s = \mathrm{Aut}_{[S/G_s]}(0) \overset{\cong}{\longrightarrow}\mathrm{Aut}_{\cX}(x) \cong \mathrm{Aut}(E)$; i.e., $f$ is a local quotient presentation of $\cX$ at $x$ in the sense of \cite[Def.~2.1]{AlperFS}.
\end{prop}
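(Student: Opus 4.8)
The plan is to deduce the statement from Luna's \'etale slice theorem applied to the quotient presentation $\cX \cong [R/G]$ with $G = \GL(V)$ reductive that was set up in Section~\ref{subsubsect:quotient_realisation}. Under this presentation the point $x$ corresponds to $[s]_G$, and the stabiliser $G_s$ is canonically identified with $\Aut(\cF_s) = \Aut(E)$, since an element $g \in \GL(V)$ fixes the quotient $[\rho \colon \cH \to E]$ precisely when it descends to an automorphism of $E$. First I would record that the two hypotheses of the slice theorem are already available: the orbit $G \acts s$ is closed in $R$ by the equivalence (2)$\Leftrightarrow$(3) of Proposition~\ref{prop:characterising_closed_points}, because $\cF_s = E$ is polystable, and the stabiliser $G_s \cong \Aut(E)$ is linearly reductive by Corollary~\ref{cor:reductive_stabiliser}; indeed, by the classification in Proposition~\ref{prop:ss sheaves} it is one of $\C^*$, $\C^* \times \C^*$ or $\GL(2,\C)$.

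Next I would produce the slice. Since $G_s$ is reductive, the $G_s$-representation $T_s R$ splits as $T_s(G \acts s) \oplus N$ for some $G_s$-invariant complement $N$; the slice $S$ is cut out transversally to the orbit with $T_s S = N$, so that the decomposition $T_s R = T_s S \oplus T_s(G \acts s)$ holds by construction. Luna's theorem then guarantees that $S$ may be chosen $G_s$-invariant, locally closed and affine, and that the map $G \times_{G_s} S \to R$ is \'etale onto a saturated open neighbourhood of $G \acts s$. Reading this on stacks yields an \'etale morphism $f \colon [S/G_s] = [(G \times_{G_s} S)/G] \to [R/G] \cong \cX$ sending $0 = [s]_{G_s}$ to $x$ and inducing the identity isomorphism on stabilisers $G_s = \Aut_{[S/G_s]}(0) \xrightarrow{\cong} \Aut_\cX(x) \cong \Aut(E)$. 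Here affineness of $f$ is part of the strong \'etaleness in Luna's conclusion, while smoothness of $G \times S \to R$ follows by composing the \'etale map $G \times_{G_s} S \to R$ with the smooth $G_s$-bundle projection $G \times S \to G \times_{G_s} S$. Matching these outputs with the precise list in \cite[Def.~2.1]{AlperFS} is then routine.

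The main obstacle is the reduction to the affine situation demanded by the classical slice theorem: $R$ is only quasi-projective, so one must exhibit a $G$-invariant affine open neighbourhood of $s$ in which the orbit remains closed. I would first attempt this using the $G$-equivariant locally closed embedding of $R$ into the projectivisation $\P(W)$ of a $G$-representation noted just before the statement, passing to the affine cone over $\overline{R}$ and separating $G \acts s$ from the boundary by $G$-invariant functions. The delicate point is that $\omega$-semistability does \emph{not} coincide with GIT-semistability for the integral polarisation $\cO_X(1)$, so $s$ need not be semistable for this linearisation and the invariant functions may all vanish along $G \acts s$, obstructing the naive construction of an invariant affine chart. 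The clean way around this, which I would ultimately adopt, is to invoke instead the \'etale slice theorem for algebraic stacks of Alper--Hall--Rydh \cite{AlperHR}: since $x$ is a closed point of $\cX$ with linearly reductive stabiliser and $\cX = [R/G]$ has affine diagonal, it directly produces an affine $S$ together with the local quotient presentation $f \colon [S/G_s] \to \cX$ enjoying all the stated properties, bypassing the construction of a global invariant affine chart altogether.
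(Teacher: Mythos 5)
Your overall strategy --- a Luna slice for the presentation $\cX\cong[R/G]$, fed by the closedness of $G\acts s$ from Proposition~\ref{prop:characterising_closed_points} and the linear reductivity of $G_s\cong\Aut(E)$ from Corollary~\ref{cor:reductive_stabiliser} --- is the one the paper follows, and your first two paragraphs are essentially sound. The trouble is in how you treat the reduction to the affine case. The difficulty you single out is not the relevant one: to run the slice construction you do not need a $G$-invariant affine chart, only a $G_s$-invariant locally closed affine neighbourhood of $s$, and this exists unconditionally. Since $R$ embeds $G$-equivariantly into $\P(W)$ for a finite-dimensional $G$-representation $W$ and the linearly reductive group $G_s$ fixes $s$, one finds a $G_s$-semi-invariant homogeneous form not vanishing at $s$ (and vanishing on $\overline{R}\setminus R$); its non-vanishing locus is a $G_s$-invariant affine neighbourhood, and GIT-semistability of $s$ for the $\cO_X(1)$-linearisation plays no role whatsoever. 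This is exactly the argument of \cite[Lem.~3.6 and Rem.~3.7]{AlperKresch} that the paper invokes, with the explicit equivariant embedding replacing Sumihiro's theorem and thereby removing the normality hypothesis. Note also that affineness of $[S/G_s]\to[R/G]$ is not ``part of strong \'etaleness'' in Luna's conclusion; it follows because $G\times^{G_s}S$ is affine ($G/G_s$ is affine by Matsushima's criterion, as $G_s$ is reductive) and $R$ is separated.

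The retreat to \cite{AlperHR} then leaves a genuine gap. That theorem produces an abstract affine scheme $\Spec A$ with a $G_x$-action and an affine \'etale morphism $[\Spec A/G_x]\to\cX$, but the proposition asserts more: $S$ must be a locally closed subscheme \emph{of $R$} through $s$ with $T_sR=T_sS\oplus T_s(G\acts s)$ and with $G\times S\to R$ smooth. These extra features are not cosmetic; they are used in Corollary~\ref{cor:stab_preserving} (\'etaleness of $G\times_{G_s}S\to R$ restricted to orbits) and in identifying $(S^{an},s)$ with a semi-universal deformation space via the restriction to $S$ of the universal family living on $X\times R$. Your assertion that \cite{AlperHR} ``directly produces an affine $S$ \dots enjoying all the stated properties'' is therefore unjustified as written: extracting the quotient-stack refinement with the slice realised inside $R$ amounts to carrying out the Alper--Kresch construction you abandoned.
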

\begin{proof}
 By Corollary~\ref{cor:reductive_stabiliser}, the stabiliser subgroup $\Aut_{\cX}(x) \cong \Aut(E)$ is linearly reductive. Consequently, the proof of the claim presented in Remark 3.7 and Lemma 3.6 of \cite{AlperKresch} continues to work even without the normality assumption made there, if we replace the application of Sumihiro's Theorem (which uses the normality assumption) by the observation made in the paragraph preceding the proposition that in our setup right from the start $R$ comes equipped with a $G$-equivariant locally closed embedding into the projective space associated with a finite-dimensional complex $G$-representation. Alternatively, see \cite[Props.~9.6 and 9.7]{JoyceSong}.
\end{proof}

\begin{cor}[Slice is stabiliser-preserving]\label{cor:stab_preserving}
 In the setup of Proposition~\ref{prop:slice}, let $t \in S \subset R$ and let $H_t$ be the stabiliser group of the action of $H := G_s$ on $S$ at the point $t\in S$. Then, we have $H_t = G_t$.
 As a consequence, we obtain
 \[H_t \cong \mathrm{Aut}(\mathcal{F}_t)\]
 under the morphism of stabiliser groups induced by $f: [S/G_s] \to \cX$. 
\end{cor}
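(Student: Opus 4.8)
The plan is to show that the evident inclusion $H_t \subseteq G_t$ is an equality; this inclusion is immediate, since $H = G_s$ is a subgroup of $G$ and hence $H_t = H \cap G_t$. The two ingredients will be a dimension count coming from the étaleness of $f$ in Proposition~\ref{prop:slice}, together with the connectedness of the automorphism groups of rank two semistable sheaves recorded in Proposition~\ref{prop:ss sheaves}. First I would translate the étaleness of $f \colon [S/H] \to \cX$ into a statement about schemes. Under the isomorphism $\cX \cong [R/G]$ the quotient morphism $R \to \cX$ is a smooth surjection, and the base change of $f$ along it is the $G$-equivariant morphism $q \colon G \times^{H} S \to R$, $[g,t] \mapsto g \cdot t$; since étaleness is preserved by base change, $q$ is étale.

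Next I would extract the equality of dimensions $\dim H_t = \dim G_t$ from the finiteness of the fibres of $q$. A direct verification shows that the stabiliser in $G$ of the point $p := [e,t] \in G \times^{H} S$ equals $H_t$, whereas its image $q(p) = t$ has stabiliser $G_t$; by $G$-equivariance the orbit $G_t \cdot p$ is contained in the fibre $q^{-1}(t)$. As $q$ is étale this fibre is finite, so $G_t \cdot p$ is zero-dimensional. Since $\mathrm{Stab}_{G_t}(p) = G_t \cap H_t = H_t$, the orbit has dimension $\dim G_t - \dim H_t$, whence $\dim H_t = \dim G_t$. Because $H_t$ is a closed subgroup of $G_t$ of full dimension, it contains the identity component, and thus $G_t^{\circ} \subseteq H_t \subseteq G_t$.

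The remaining, and decisive, step is to pass from identity components to the full groups, i.e.\ to rule out that $H_t$ is a proper open subgroup of $G_t$. Here I would use that $G_t$ is connected: under $\cX \cong [R/G]$ the stabiliser $G_t = \Aut_{[R/G]}([t]_G)$ is identified with $\Aut(\cF_t)$, and since $t \in S \subseteq R$ the sheaf $\cF_t$ is semistable of rank two, so by Proposition~\ref{prop:ss sheaves} its automorphism group is one of $\C^*$, $\C^* \times \C^*$, $\GL(2,\C)$ or $\C^* \times \C$, each of which is connected. Consequently $G_t = G_t^{\circ} \subseteq H_t$, and therefore $H_t = G_t$. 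The asserted isomorphism $H_t \cong \Aut(\cF_t)$ compatible with the morphism of stabilisers induced by $f$ is then obtained by composing the equality $H_t = G_t$ with the identification $G_t \cong \Aut(\cF_t)$. The main obstacle is precisely this last component-group issue: the étaleness of $f$ only controls $H_t$ up to finite index inside $G_t$, and it is the connectedness furnished by the rank two classification that closes the gap.
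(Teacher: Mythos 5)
Your proposal is correct and follows essentially the same route as the paper: both reduce the problem to the dimension count $\dim H_t=\dim G_t$ via the \'etaleness of the induced $G$-equivariant map $G\times_H S\to R$ (noting that the stabiliser of $[e,t]$ is $H_t$), and then close the finite-index gap using the connectedness of $\mathrm{Aut}(\mathcal{F}_t)$ supplied by the rank-two classification. The only cosmetic difference is that you extract the dimension equality from the finiteness of the fibre $q^{-1}(t)$ and the $G_t$-orbit of $[e,t]$ inside it, whereas the paper restricts the \'etale map to a $G$-orbit; these are the same count.
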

\begin{proof}
 As the $H$-action on $S \subset R$ is obtained by restricting the $G$-action to the subgroup $H$, we clearly have the inclusion
 \begin{equation}\label{eq:one_inclusion}
  H_t \subset G_t \quad \quad \forall t \in S.
 \end{equation}
Moreover, as for all $t \in S$ the stabiliser subgroup $G_t$ is isomorphic to the automorphism group $\mathrm{Aut}(\mathcal{F}_t)$ of the corresponding member of the family $\mathcal{F}$, and is therefore connected by Proposition~\ref{prop:ss sheaves}, it suffices to show that the two groups appearing in \eqref{eq:one_inclusion} have the same dimension. Now, the fact that $f: [S/G_s] \to \cX$ is \'etale implies that the natural $G$-equivariant morphism 
\[\varphi: G \times_H S \to R, [g, t]\; \mapsto g \acts t\]
from the twisted product $G\times_H S := (G \times S)/H$\footnote{The quotient is taken with respect to the proper action $h\acts (g, t) := (gh^{-1}, h\acts t)$.} to $R$ is \'etale. In particular, the restriction of $\varphi$ to any $G$-orbit is \'etale. We conclude that 
\[\dim H_t = \dim G_{[e,t]} = \dim G_{\varphi([e,t])} = \dim G_t,\]
as desired. This concludes the proof.
\end{proof}

  \begin{prop}[Slice provides semi-universal deformation]
   In the situation of Propositions~\ref{prop:slice}, the analytic germ $(S^{an}, s)$ of $S^{an}$ at $s$ together with the restriction $(\cU^{an}, s) := (\cF|_{(S,s) \times X})^{an}$ of the universal family $\cF$ of $R$ to $(S^{an}, s)$ is a semi-universal deformation of $E$.
  \end{prop}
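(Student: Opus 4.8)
The plan is to verify the two defining properties of a semi-universal (miniversal) deformation: that $(\cU^{an},s)$ is \emph{versal}, and that its Kodaira--Spencer (tangent) map $T_sS\to\E(E,E)$ is \emph{bijective}. By the standard criterion (see \cite{Palamodov-versal}) these two conditions together characterise semi-universality, the tangent space of the base of a semi-universal deformation being exactly the space $\E(E,E)$ of first-order deformations of $E$ (Lemma~\ref{lemma: deformations}). Both properties will be read off from the quotient presentation of $\cX$ set up in Section~\ref{subsubsect:quotient_realisation}, in which $\cF_s\cong E$ and $R$ sits inside the Quot-scheme of $m$-regular quotients of $\cH=V\otimes\cO_X(-m)$.

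First I would compute the tangent map. Writing $\rho\colon\cH\to E$ for the quotient corresponding to $s$ and $K:=\Ker(\rho)$, the tangent space of $R$ (an open subscheme of $\Quot_\cH$) at $s$ is $\Hom_{\cO_X}(K,E)$. Applying $\Hom_{\cO_X}(-,E)$ to $0\to K\to\cH\to E\to0$ and using $m$-regularity of $E$, which gives $\Ext^1_{\cO_X}(\cH,E)=V^*\otimes H^1(E(m))=0$, yields the exact sequence
\[
0\to\Hom(E,E)\to\Hom(\cH,E)\to\Hom(K,E)\xrightarrow{\ \kappa\ }\E(E,E)\to0,
\]
in which $\Hom(\cH,E)=V^*\otimes H^0(E(m))\cong\mathfrak{gl}(V)=\mathrm{Lie}(G)$, the map $\Hom(\cH,E)\to\Hom(K,E)=T_sR$ is the differential of the orbit map $G\to R$, $g\mapsto g\cdot s$, and $\kappa$ is the Kodaira--Spencer map of $\cF$ at $s$. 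Thus the image of $\mathrm{Lie}(G)$ equals $T_s(G\cdot s)$, and $\kappa$ induces an isomorphism $T_sR/T_s(G\cdot s)\cong\E(E,E)$. Combined with the transversality $T_sR=T_sS\oplus T_s(G\cdot s)$ from Proposition~\ref{prop:slice}, the restriction of $\kappa$ to $T_sS$ --- which is precisely the tangent map of $(\cU^{an},s)$ --- is an isomorphism onto $\E(E,E)$.

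For versality, let $(\cG,\psi)$ be a deformation of $E$ over an analytic germ $(T,t_0)$. Since $E$ is $m$-regular and $m$-regularity is an open condition, after shrinking $T$ every fibre $\cG_t$ is $m$-regular, so by cohomology and base change $\pi_{T*}(\cG(m))$ is locally free of rank $N$; trivialising it compatibly with the framing $V\cong H^0(E(m))$ at $t_0$ exhibits $\cG$ as a flat family of framed quotients of $\cH$, hence by the universal property of (the analytification of) $R$ determines a germ morphism $g\colon(T,t_0)\to(R^{an},s)$ with $g^*\cF\cong\cG$. The morphism $\mu\colon G\times S\to R$ being smooth, its analytification is a submersion near $(e,s)$ and therefore admits germ-wise analytic sections through $s$; thus $g$ lifts to $\tilde g=(\gamma,h)\colon(T,t_0)\to(G\times S,(e,s))$ with $\mu\circ\tilde g=g$. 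Finally, the $G$-equivariant structure of $\cF$ (it is a $G$-sheaf, see Section~\ref{subsubsect:quotient_realisation}) provides a canonical isomorphism $\mu^*\cF\cong\mathrm{pr}_S^*(\cF|_{X\times S})=\mathrm{pr}_S^*\cU$, whence $\cG\cong g^*\cF\cong\tilde g^*\mu^*\cF\cong h^*\cU^{an}$; adjusting the trivialisation so that the fibre isomorphism at $t_0$ matches $\psi$ yields the desired factorisation $h\colon(T,t_0)\to(S^{an},s)$, proving versality.

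I expect the main obstacle to be the careful passage between the algebraic and analytic settings in the versality argument --- specifically, verifying that the framing/universal-property step is legitimate over an arbitrary analytic germ base (which relies on the base-change properties of $m$-regular families and on the analytic universal property of $R$, realised via Douady spaces as in the proof of Theorem~\ref{thm:openness-semistability}) and that the smooth morphism $\mu$ indeed admits germ-wise analytic lifts through $s$. The tangent-space computation, by contrast, is formal once the vanishing $\Ext^1_{\cO_X}(\cH,E)=0$ coming from $m$-regularity is in place.
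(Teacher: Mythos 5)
Your proof is correct in outline, but it takes a genuinely different route from the paper. The paper argues through the formal deformation theory of algebraic stacks: it verifies via \cite[Prop.~A.9]{AlperHR} that $(S,s)\to([S/G_s],0)$ is formally versal, notes that the tangent map is an isomorphism because $s$ is a $G_s$-fixed point, concludes that $\widehat{\cU}$ over the formal completion $\widehat S$ is formally miniversal, and then invokes Flenner's Satz 8.2 \cite{FlennerHabil} to pass from the formal miniversal deformation to the analytic semi-universal one. You instead verify the two defining properties directly in the analytic category: the Kodaira--Spencer computation via $0\to\Hom(E,E)\to\Hom(\cH,E)\to\Hom(K,E)\to\E(E,E)\to 0$ (using $\Ext^1(\cH,E)=0$ from $m$-regularity) combined with the slice decomposition $T_sR=T_sS\oplus T_s(G\cdot s)$, and completeness via a classifying map to $R^{an}$ lifted through the smooth morphism $G\times S\to R$. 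This is essentially the classical Quot-scheme argument (cf.\ \cite[Sect.~4.5]{HL}, \cite{LePotier_Lectures}) adapted to the slice, and it buys you freedom from any formal-to-convergent comparison. The price is that you must justify two points you only gesture at: first, that $R^{an}$ represents the analytic framed-quotient functor over arbitrary (in particular non-reduced) germ bases, and that $\pi_{T*}(\cG(m))$ is locally free and compatible with base change in that generality; second, if ``versal'' is taken in the strong sense of the lifting property over closed embeddings of germs (as in \cite{Palamodov-versal}, \cite{FlennerHabil}), your construction must be run relatively -- given $h$ on $T\hookrightarrow T'$, extend the induced trivialisation of $\pi_{T*}(\cG(m))$ to $T'$ and use the infinitesimal lifting property of the smooth map $\mu$ to extend the lift -- rather than merely producing some classifying map for each deformation separately. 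Both points are standard and your argument supports them, so the proof goes through.
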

\begin{proof}
As both $[S/G_s]$ and $\cX$ are algebraic stacks, there exist formal miniversal deformations $\widehat{\Def}(x)$ and $\widehat{\Def}([s])$ of $x\in \cX(\C)$ and $[s] \in [\Spec A/G_x](\C)$. Moreover, the local quotient presentation establishes an isomorphism of formal schemes $\hat{f}:\widehat{\Def}([s]) \to \widehat{\Def}(x)$. We will check that the first space is isomorphic to the formal completion $\widehat S$ of $S$ at $s$.

We claim that the natural morphism $(S, s)\to ([S/G_s], 0)$ is formally versal at $s$ in the sense of \cite[Def.~A.8]{AlperHR}. For this, we check the assumptions of \cite[Prop.~A.9]{AlperHR}: Both $s$ and $0$ are closed points. Moreover, the morphism $S \to [S/G_s]$ is representable and smooth. Hence, the induced map of the $0$-th infinitesimal neighbourhoods $S^{[0]} \to [S/G_s]^{[0]}$ is likewise representable, and for every $n \in \mathbb{N}$ the induced map of $n$-th infinitesimal neighbourhoods $S^{[n]} \to [S/G_s]^{[n]}$ is smooth. Finally, the stabiliser of $[S/G_s]$ at $s$, which is equal to $G_s \cong \mathrm{Aut}(E)$, is reductive. Consequently, part (2) of \cite[Prop.~A.9]{AlperHR} implies that $(S, s)\to ([S/G_s], 0)$ is formally versal at $s$, as claimed. Moreover, as $s$ is a $G_s$-fixed point, the induced map on tangent spaces $T_s S \to T_0 [S/G_s]$ is an isomorphism. 

As a consequence, we see that the restriction $\widehat{\cU}$ of the universal family $\cF$ to the formal completion $\widehat S$ of $S$ at $s$ is an object of $\cX$ over $\widehat S$ that is formally miniversal at $s$ in the sense of \cite[Def.~2.8]{Alper}. Moreover, $\cU^{an} = (\cF|_{S \times X})^{an}$ obviously provides an analytification of $\widehat{\cU}$. It follows from the fact that a versal deformation of $E$ exists and from \cite[Satz 8.2]{FlennerHabil} that the germ $(S,s)$ of $S$ at $s$ together with the restriction of $\cU^{an}$ to this germ is a semi-universal deformation of $E$. 
\end{proof}

\begin{rem}
 Using analytic stacks, an alternative proof can be given as follows: As in the above proof, one easily checks that the map $(S,s)^{an} \to [S/G_s]^{an} = [S^{an}/G_s]$ is smooth and the induced map on tangent spaces is an isomorphism. These two conditions are equivalent to the conditions in the definition of a semi-universal family, cf.~\cite[p.~19]{KosarewStieber90}.
\end{rem}

For later usage, we note two properties of the $\mathrm{Aut}(E)$-action on its semi-universal space $(S, s)^{an}$: 

\begin{lemma}[Action of the homothety subgroup]\label{lemma:action_of_homotheties}
 In the situation of Proposition~\ref{prop:slice}, the subgroup of homotheties $\mathbb{C}^*\cdot \Id_E$ of $E$  acts trivially on the semi-universal deformation space $S$. 
\end{lemma}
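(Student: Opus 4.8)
The plan is to use the quotient-stack realisation of $\cX$ from Section~\ref{subsubsect:quotient_realisation} and to identify the homothety subgroup $\C^*\cdot\Id_E$ with the central scalars inside $G=\GL(V)$, which act trivially on the whole Quot-scheme. Recall that in the slice construction the subscheme $S$ sits inside $R\subset\Quot_\cH$, where $\cH=V\otimes_\C\cO_X(-m)$ and $V=\C^N\cong H^0(E(m))$, that $S$ is invariant under $G_s\cong\Aut(E)$, and that the $\Aut(E)$-action on $S$ is by definition the restriction of the $G$-action on $R$. Hence it suffices to locate the homotheties inside $G$ and to show that the resulting subgroup fixes every point of $R$.

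First I would pin down the image of $\C^*\cdot\Id_E$ under the isomorphism $\Aut(E)\cong G_s$. Under this identification an element $g\in G_s\subset\GL(V)$ corresponds to the unique automorphism $\psi_g$ of $E$ satisfying $\rho\circ g=\psi_g\circ\rho$ for the chosen surjection $\rho\colon\cH\to E$; conversely $\psi\in\Aut(E)$ corresponds to $H^0(\psi(m))\in\GL(H^0(E(m)))=\GL(V)$. Applying this to $\psi=\lambda\cdot\Id_E$ gives $H^0(\psi(m))=\lambda\cdot\Id_V$, so the homothety subgroup $\C^*\cdot\Id_E$ is carried precisely onto the subgroup $\C^*\cdot\Id_V$ of scalar matrices, i.e.\ onto the centre of $G=\GL(V)$.

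It then remains to observe that the central scalars act trivially on $\Quot_\cH$. A $\C$-point of $\Quot_\cH$ is determined by the kernel of the corresponding surjection $\rho\colon\cH\to F$, and the action of $c\cdot\Id_V\in G$ replaces $\rho$ by its composition with the automorphism $c\cdot\Id_\cH$ of $\cH$; since this automorphism is a nowhere-vanishing scalar it does not change $\ker\rho$, so the point of $\Quot_\cH$ is unchanged. Consequently $\C^*\cdot\Id_V$ fixes every point of $\Quot_\cH$, hence of $R$, hence of the invariant subscheme $S$, and the lemma follows (the same statement then holds verbatim on the analytic germ $(S^{an},s)$).

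I expect the only genuinely delicate point to be the bookkeeping in the second paragraph: matching the two mutually inverse isomorphisms $G_s\cong\Aut(E)$ and confirming that a homothety of $E$ really induces the scalar matrix on $V\cong H^0(E(m))$ rather than some twist. Once the conventions of Section~\ref{subsubsect:quotient_realisation} are fixed, however, this is immediate. A more intrinsic alternative would be to argue deformation-theoretically: by Lemma~\ref{lemma: deformations} and the action in Remark~\ref{remark:extensions}(1), the homothety $\lambda\cdot\Id_E$ acts on $\E(E,E)\cong T_sS$ by $(\lambda\,\Id_E)_*\circ(\lambda^{-1}\,\Id_E)^*$, which is multiplication by $\lambda\cdot\lambda^{-1}=1$, and in fact it sends every deformation $(\cF,\phi)$ to the isomorphic deformation $(\cF,\lambda\phi)$ via the isomorphism $\lambda^{-1}\cdot\Id_\cF$. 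Promoting ``fixes all isomorphism classes'' to ``acts as the identity on $S$'' still relies on the rigidity supplied by the Quot-construction, so the scalar-matrix argument above is the cleaner route.
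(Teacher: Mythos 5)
Your proposal is correct and follows essentially the same route as the paper: the paper's proof likewise identifies $\C^*\cdot\Id_E$ with the scalar matrices $\C^*\cdot\Id_V$ under $\Aut(E)\cong G_s\subset\GL(V)$ and invokes the fact that these act trivially on $\Quot_\cH$ (citing the proof of Lemma~4.3.2 in \cite{HL}). Your write-up merely makes the bookkeeping of that identification and the triviality of the central action on the Quot-scheme explicit.
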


\begin{proof}
 Under the identification of $\mathrm{Aut}(E)$ with $G_s \subset \GL(V)$, the subgroup $\mathbb{C}^*\cdot \Id_E$ is mapped to $\mathbb{C}^*\cdot \Id_V$, which acts trivially on $\mathrm{Quot}_\cH$, see \cite[proof of Lem.~4.3.2]{HL}.
\end{proof}

\begin{lemma}[Action on tangent space]\label{lem:action_on_tangent}
  Using the identification of $\mathrm{Aut}(E)$ with $G_s$, the tangent space of $(S, s)$ is $\Aut(E)$-equivariantly isomorphic to $\E (E,E)$, where the action on the latter space is as described in Section~\ref{subsect:sheafextensions}.
\end{lemma}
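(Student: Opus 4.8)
The plan is to transport the $\Aut(E)$-equivariant identification already established for first-order deformations in Lemma~\ref{lemma: deformations} onto the tangent space $T_sS$ via the isomorphism $T_sS \cong T_0[S/G_s]$ and the deformation-theoretic meaning of the slice. Recall from the previous proposition that $(S,s)^{an}$, together with the restriction of the universal family $\cF$, is a semi-universal deformation of $E$. Since the Kodaira--Spencer map of a semi-universal deformation is an isomorphism onto the space of first-order deformations, we obtain a linear isomorphism $\kappa: T_sS \overset{\cong}{\to} \E(E,E)$. The first task is thus to identify this Kodaira--Spencer map with the map underlying the $\Aut(E)$-action, and the main point is to verify that $\kappa$ intertwines the $G_s = \Aut(E)$-action on $T_sS$ (differentiating the $G_s$-action on $S$) with the $\Aut(E)$-action on $\E(E,E)$ from Section~\ref{subsect:sheafextensions}.

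First I would make the two actions explicit. On the source, the group $G_s$ acts on $S \subset R$ by restricting the $G=\GL(V)$-action on the Quot-scheme, and differentiating at the fixed point $s$ gives the linear $G_s$-action on $T_sS$. On the target, $\Aut(E)$ acts on the set of isomorphism classes of first-order deformations via $g\cdot(\cF,\phi) = (\cF, g\circ\phi)$, and under the identification with $\E(E,E)$ this is precisely the action $\alpha(\xi) = (\alpha_*\circ(\alpha^{-1})^*)(\xi)$ described in Remark~\ref{remark:extensions}(1), as recorded in Lemma~\ref{lemma: deformations}. The key observation is that the $G_s$-action on the slice is, by the construction of $R$ as a frame bundle of $\cF(m)$, nothing but the action of $\Aut(E)$ on the family $\cF$ induced by reparametrising the chosen trivialisation $V \overset{\cong}{\to} H^0(E(m))$; under the identification $G_s \cong \Aut(E)$ from Proposition~\ref{prop:slice} this is exactly the action $g\cdot(\cF,\phi) = (\cF, g\circ\phi)$ on deformations. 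Consequently the tangent action on $T_sS$ corresponds under $\kappa$ to the action on first-order deformations, which by Lemma~\ref{lemma: deformations} is the asserted action on $\E(E,E)$.

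Concretely, the steps are: (i) recall that the semi-universality established above yields an $\Aut(E)$-equivariant bijection between the germ $(S,s)$ and the germ of a semi-universal deformation, so that the induced map on tangent spaces $T_sS \to \E(E,E)$ is an isomorphism, the equivariance holding because the $G_s$-action on $S$ is compatible with the $\Aut(E)$-action on the family $\cF$; (ii) identify the resulting tangent isomorphism with the Kodaira--Spencer map $\kappa$; (iii) invoke Lemma~\ref{lemma: deformations} to conclude that $\kappa$ is $\Aut(E)$-equivariant for the action on $\E(E,E)$ specified in Section~\ref{subsect:sheafextensions}. The heart of the argument lies in matching the two group actions, i.e. in checking that differentiating the geometric $G_s$-action on the slice reproduces the deformation-theoretic $\Aut(E)$-action; once this compatibility is recorded, equivariance of $\kappa$ is a formal consequence of Lemma~\ref{lemma: deformations}.

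The step I expect to be the main obstacle is precisely this identification of the two actions at the level of the slice. Both actions are ultimately induced by $\Aut(E)$ acting on the fibre $E = \cF_s$, but on the source side the action passes through the frame-bundle description of $R$ (reparametrising $V \cong H^0(E(m))$) and the choice of slice $S$, whereas on the target side it is the abstract action on deformation classes. Making the dictionary between these precise---and checking that the slice $S$, which is only $G_s$-invariant rather than canonically defined, carries the action compatibly with the universal family---requires care, even though each individual verification is routine. Once this compatibility is in place, the equivariance of the isomorphism follows directly from Lemma~\ref{lemma: deformations}, so no further hard computation is needed.
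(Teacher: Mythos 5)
Your argument is correct and is precisely the one the paper intends: the paper in fact states Lemma~\ref{lem:action_on_tangent} without proof, but the preceding proposition (semi-universality of the slice $(S^{an},s)$ with the restricted universal family) and Lemma~\ref{lemma: deformations} are set up exactly to produce the Kodaira--Spencer isomorphism $T_sS\cong\E(E,E)$ and its equivariance in the way you describe. The one step you rightly single out as the real content --- matching the differentiated $G_s$-action on the slice, coming through the frame-bundle description of $R$ and the $G$-linearisation of $\cF$, with the action $g\cdot(\cF,\phi)=(\cF,g\circ\phi)$ on first-order deformations before invoking Lemma~\ref{lemma: deformations} --- is indeed the only non-formal verification, and your treatment of it is adequate.
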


\section{Construction of the moduli space}\label{sect:construction}

 The aim of this section is to construct a good  moduli space for  the stack $\cX:=\cC oh_{(X,\omega),\tau}^{ss}$ of semistable sheaves with fixed type  $\tau = (2, c_1, \dots, c_{2 \dim X})$ on $(X,\omega)$. As we have seen in Section~\ref{subsect:quotient_presentations}, whose notation we will use in our subsequent arguments, this is an algebraic stack locally of finite type over $\C$, which can be realized as a quotient stack $\cX \cong [R/G]$. Using this global quotient presentation as well as the local slice models also constructed in Section~\ref{subsect:quotient_presentations}, we will prove that the algebraic stack $\cX$ admits a good moduli space by checking the conditions of \cite[Theorem 1.2]{AlperFS}. 
 
To collect some general information about $\cX$, we start by noting that $\cX$ has affine diagonal. For this we use the chart $R\to\cX$. Thus to check that the map $\cX\to\cX\times_{\Spec \C}\cX$ is affine comes to showing that $R\times G\to R\times R$, $(q,g)\mapsto(q,qg)$ is affine. But this is a map of $R$-schemes, where $R\times G$ is affine over $R$ and $R\times R$ is separated over $R$, hence the conclusion follows by \cite[Lemma 28.11.11(2)]{stacks-project}.
In a similar way one checks that $\cX$ is quasi-separated; one  looks again at the map $R\times G\to R\times R$, which is quasi-compact.

The following is the main result of this section. 
\begin{thm}[Existence]\label{thm:existence} The stack $\cC oh_{(X,\omega),\tau}^{ss}$  of $\omega$-semistable sheaves with rank two and fixed Chern classes admits a good moduli space $M^{ss}:=M_{(X,\omega),\tau}^{ss}$.
\end{thm}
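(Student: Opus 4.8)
The plan is to verify the three hypotheses of \cite[Theorem~1.2]{AlperFS}, which provides existence of a good moduli space for an algebraic stack $\cX$ locally of finite type over $\C$ with affine diagonal, provided that (i) $\cX$ admits local quotient presentations around points with linearly reductive stabiliser, (ii) these local quotient presentations can be chosen to be \emph{stabiliser-preserving} and to satisfy a compatibility ensuring uniqueness of closed points in fibres, and (iii) $\cX$ is $\Theta$-reductive and satisfies the relevant separatedness/properness-type condition guaranteeing that the glued good moduli space exists. Concretely, \cite[Theorem~1.2]{AlperFS} asks that every closed point of $\cX$ has linearly reductive stabiliser and admits a local quotient presentation $f\colon [S/G_s] \to \cX$ that is stabiliser-preserving, sends closed points to closed points, and is such that the closed points in the fibre over a closed point form a single orbit. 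Having assembled all the necessary geometric input in Section~\ref{subsect:quotient_presentations}, the proof becomes a matter of citing these preparatory results in the correct order.

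First I would record the structural facts already established: by the discussion preceding the theorem, $\cX$ has affine diagonal and is quasi-separated, and by Section~\ref{subsect:quotient_presentations} it is an algebraic stack locally of finite type over $\C$ realised as the quotient stack $[R/G]$. Next, Corollary~\ref{cor:reductive_stabiliser} guarantees that every closed point of $\cX$ has linearly reductive stabiliser, which is the first hypothesis. For the second, Proposition~\ref{prop:slice} provides, for each closed point $x \in \cX(\C)$ corresponding to a semistable sheaf $E$, an \'etale affine local quotient presentation $f\colon [S/G_s] \to \cX$ with $G_s \cong \Aut(E)$, and Corollary~\ref{cor:stab_preserving} shows that $f$ is stabiliser-preserving in the sense that $H_t = G_t$ for all $t \in S$, so that the induced maps of stabiliser groups are isomorphisms. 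The characterisation of closed points via polystability (Proposition~\ref{prop:characterising_closed_points}) together with the uniqueness of polystable degenerations shows that $f$ sends closed points to closed points and that closed points in a fibre over a fixed closed point of $\cX$ are identified, supplying the remaining local compatibility required by \cite[Theorem~1.2]{AlperFS}.

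The main obstacle I expect is the verification of the global hypothesis in \cite[Theorem~1.2]{AlperFS} concerning $\Theta$-reductivity (equivalently, the condition ensuring that the local good moduli spaces glue to a single good moduli space), since unlike the purely local slice analysis this is a genuinely global condition on the stack. The key principle at work is the uniqueness of polystable degenerations established above: every $\C$-point $y$ of $\cX$ has a unique closed point in $\overline{\{y\}}$, namely the one corresponding to the polystable sheaf $\mathrm{gr}^{JH}(\cF_z)$. This uniqueness, combined with the openness of semistability (Theorem~\ref{thm:openness-semistability}) and the abelian/noetherian/artinian structure of $\Coh^{ss}(X,\omega,p)$ (Proposition~\ref{prop:abelian_category}), is precisely what is needed to control one-parameter degenerations and confirm the global criterion.

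Finally, with all three hypotheses of \cite[Theorem~1.2]{AlperFS} verified, that theorem produces a good moduli space $M^{ss} := M_{(X,\omega),\tau}^{ss}$, which is an algebraic space since $\cX$ is an algebraic stack; this establishes the existence statement. The properness and separatedness assertions of the Main Theorem are deferred to Section~\ref{sect:properties} and are not part of this existence result; here it suffices to produce the good moduli space itself. I would conclude the argument by assembling the cited results into a single application of \cite[Theorem~1.2]{AlperFS}.
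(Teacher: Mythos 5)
Your plan misstates the hypotheses of \cite[Theorem~1.2]{AlperFS} and, as a result, omits the two parts of the argument that carry essentially all of the difficulty. The criterion actually used has no $\Theta$-reductivity hypothesis; its conditions are (1) that for every $\C$-point $y$ the closed substack $\overline{\{y\}}\subset\cX$ admits a good moduli space, and (2) that every closed point admits a local quotient presentation which sends closed points to closed points and is stabiliser-preserving at closed points. Condition (1) is entirely absent from your proposal: for a properly semistable $F$ sitting in $0\to L_1\to F\to L_2\to 0$ one must actually produce a good moduli space for $\overline{\{y\}}$, and the paper does this by introducing the flag stack $\cD rap(X;P,P)$, proving properness of the forgetful morphism to $\cC oh(X)$ (which rests on properness of relative Douady/Quot spaces with respect to $\omega$, a genuine input from \cite{TomaLimitareaI,TomaLimitareaII}), identifying $\cD rap(X;L_1,L_2)$ with $[W/\C^*\times\C^*]$ (Lemma~\ref{lemma: extensions}), and, when $L_1\cong L_2$, comparing with the GIT moduli stack for an ample class $h$. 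Replacing this with an appeal to ``uniqueness of polystable degenerations plus openness of semistability'' does not verify any hypothesis of the cited theorem.

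The second gap is in condition 2(a). You assert that Proposition~\ref{prop:characterising_closed_points} together with uniqueness of polystable degenerations shows that $f\colon [S/G_s]\to\cX$ sends closed points to closed points. But that proposition characterises closed points of $\cX$ (equivalently, closed $G$-orbits in $R$); the issue is the converse direction on the slice: a point $t\in S$ whose $G_s$-orbit is closed in $S$ could a priori parametrise a non-polystable sheaf, and then $f$ would fail 2(a). Ruling this out is the technical core of the paper's proof in the case $\Aut(E)\cong\C^*\times\C^*$: one must show, after shrinking $S$, that orbits of non-polystable fibres are never closed, which the paper does by a contradiction argument involving a curve of putative closed non-polystable orbits, the induced family of extensions and its classifying map to $\Ext^1(L_{2,s},L_{1,s})$, and a weight analysis of a $\C^*$-equivariant embedding $S\hookrightarrow V_+\oplus V_0\oplus V_-$ forcing $Y_{\mathrm{red}}$ into $V_+\oplus V_0$, where the only closed orbits are fixed points. (The $\GL(2,\C)$ case likewise needs the comparison with the $h$-semistable locus and Luna's slice theorem rather than the abstract slice of Proposition~\ref{prop:slice}.) Without these arguments the proof does not go through; the parts you do cite correctly (Corollary~\ref{cor:reductive_stabiliser}, Proposition~\ref{prop:slice}, Corollary~\ref{cor:stab_preserving}) only settle the stable case and condition 2(b).
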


\begin{proof} Recall the criteria for the existence of a good moduli space given in \cite[Theorem 1.2]{AlperFS}:
\begin{enumerate}
\item For any $\C$-point $y\in\cX(\C)$, the closed substack $\overline{\{y\}} \subset \cX$ admits a good moduli space.
\item For any closed point $x\in\cX(\C)$ there exists a local quotient presentation $f:\cW\to\cX$ around $x$ in the sense of \cite[Definition 2.1]{AlperFS} such that
\begin{enumerate}
\item $f$ sends closed points to closed points,
\item $f$ is stabilizer preserving at closed points of $\cW$.
\end{enumerate}
\end{enumerate}

Following the structure of these conditions, our proof is subdivided into two big steps, establishing Condition (1) and (2), respectively. 
\subsection{Condition (1)} If $y$ is closed, the condition is easily verified, as the stabiliser group of $y$ is affine. So, suppose that $y$ corresponds to a semistable sheaf $F$ appearing as an extension 
$$0\to L_1\to F\to L_2\to 0,$$ 
where $L_1$, $L_2$ are rank one sheaves with the same  Hilbert-polynomial $P$.

To deal with such extensions we consider the stack of flags $\cD rap(X;P,P)$ whose objects over $S$ are sheaves $\cF_1\subset \cF_2$ on $X\times S$ such that $\cF_1$ and $ \cF_2/\cF_1$  are flat over $S$ with fixed Hilbert-polynomials $P$ and $P$ respectively,  cf.~\cite[Sect.~2.A.1]{HL}. In fact since the Hilbert polynomial $P^\omega$ is constant in flat families, $\cD rap(X; P,P)$ is a closed and open substack of the stack $\Quot((X\times\cC oh(X))/\cC oh(X),\cF)$, where $\cF$ is the universal sheaf on $X\times\cC oh(X)$\footnote{See \cite{HallRydh_Hilbert} for a general result on $\Quot((X\times\cC oh(X))/\cC oh(X),\cF)$, but note that in our case $X\times\cC oh(X)\to\cC oh(X)$ is locally of finite presentation.}.

We claim that the forgetful morphism $\phi:\cD rap(X;P,P)\to\cC oh(X)$, $(\cF_1,\cF_2)\mapsto\cF_2$ is proper in the sense of \cite[D\'efinition 3.10.1]{LaumonMB}. Indeed, for any object of $\cC oh(X)$, given by a flat family $\cF$ of coherent sheaves on $X$ parametrised by a scheme $S$ we get a Cartesian diagram
 \begin{equation*}
\label{diagr: Drap}
\begin{gathered}
\xymatrix{ 
S\times_{\cC oh(X)}{\cD rap(X; P,P)}\ar[r] \ar[d]^{\phi_S} & \cD rap(X;P, P) \ar[d]^\phi  \\
S \ar[r] & \cC oh(X)
}
\end{gathered}
\end{equation*}
in which the first vertical map comes from the universal family of quotients of $\cF$ relative to $S$ of Hilbert polynomial $P$. Thus, the morphism $\phi_S$ is the natural map $\Quot_{\cF/S}(P)\to S$, which is proper by \cite{TomaLimitareaII}; see \cite[Corollary 5.3]{TomaLimitareaI} for a proof of the absolute case of this claim. Hence, also $\phi$ is  proper. In our case since $X$ is projective it follows in fact that $\phi$ is even projective, but we will not need this fact.

Let now $P=\frac{1}{2}P_E^\omega$, where $E$ is a coherent sheaf on $X$ of type $\tau$. We  consider the substack $\cD rap^{ss}(X; P,P)\subset\cD rap(X;P)$ of sheaves $\cF_1\subset \cF_2$ as before such that moreover $\cF_2$ is semistable of Hilbert polynomial $2P$. 
Note that in this situation the quotient $\cF_2/\cF_1$ has rank one and no torsion, since otherwise the saturation of $\cF_1$ in $\cF_2$ would contradict the semistability of $\cF_2$. 
We thus get a morphism $\cD rap^{ss}(X; P,P)\to\cM^s(X,P)\times\cM^s(X,P)$, $(\cF_1,\cF_2)\mapsto(\cF_1,\cF_2/\cF_1)$ whose fibre over a closed point $(L_1,L_2)\in\cM^s(X,P)\times\cM^s(X,P)$  is the closed substack \[\cD rap(X; L_1,L_2)\subset\cD rap^{ss}(X; P,P)\] of flags  $\cF_1\subset \cF_2$ such that the fibres of $\cF_1$ are isomorphic to $L_1$ and the  fibres of $\cF_2/\cF_1$ are isomorphic to $L_2$. Here $\cM^s(X,P)$ denotes the moduli space of rank one stable sheaves on $X$ with Hilbert polynomial $P$.

\subsubsection{The case when $L_1\cong L_2$.} We look at points $y\in \cC oh(X)(\C)$ corresponding to coherent sheaves $F$ on $X$ that sit in a short exact sequence of the type
$$0\to L\to F\to L\to0.$$
Such coherent sheaves $F$ are semistable with respect to any polarisation on $X$, so also with respect to the ample line bundle $\cO_X(1)$. Let $h=c_1(\cO_X(1))$ and consider the open substack $\cU:=\cC oh_{(X,\omega),\tau}^{ss}\cap\cC oh_{(X,h),\tau}^{ss}$ of $\cC oh_{(X,\omega),\tau}^{ss}$. Then,
$y\in\cU(\C)$.  
The stack  $\cD rap(X; L,L)$ is proper over both $\cC oh_{(X,\omega),\tau}^{ss}$ and $\cC oh_{(X,h),\tau}^{ss}$, and its image contains the respective closures of ${y}$ in $\cC oh_{(X,\omega),\tau}^{ss}$ and $\cC oh_{(X,h),\tau}^{ss}$, which therefore coincide. It is thus enough to show that the closure $\overline{\{y\}}$ in $\cC oh_{(X,h),\tau}^{ss}$ admits a good moduli space. 
But the latter stack admits a good moduli space itself \cite[Example 8.7]{Alper13}, so the same will hold for the closed substack $\overline{\{y\}}$ by \cite[Lemma 4.14]{Alper13}.

\subsubsection{The case when $L_1\ncong L_2$.} 
As $L_1\ncong L_2$ the morphism $\cD rap(X; L_1,L_2)\to \cC oh^{ss}(X,2P)$ is proper with finite fibres, hence finite, cf.~\cite[Lemma 36.38.4]{stacks-project}, and therefore affine. Let $\cY_{L_1,L_2}\subset \cC oh^{ss}(X,2P)$ be its image. We will use \cite[Proposition 1.4]{AlperFS} to show that $\cY_{L_1,L_2}$ admits a good moduli space. From this and \cite[Lemma 4.14]{Alper13}  it will follow that the closed substack $\overline{\{y\}}\subset \cY_{L_1,L_2}$ likewise admits a good moduli space.

Set $W:=\Ext^1(L_2,L_1)$. In Lemma \ref{lemma: extensions} we will show that $\cD rap(X; L_1,L_2)\cong[W/\C^*\times\C^*]$, so that in particular $\cD rap(X; L_1,L_2)$ admits a separated good moduli space. 
It remains to check that $\cY_{L_1,L_2}$ is a global quotient stack and  admits local quotient presentations.  
As a closed substack of the global quotient stack $\cX\cong[R/G]$, $\cY_{L_1,L_2}$ is a global quotient stack as well, cf.~\cite[Definition 3.4]{Alper}. 
Moreover,the morphism $\cD rap(X; L_1,L_2)\to\cY_{L_1,L_2}$ is itself a local quotient presentation for $\cY_{L_1,L_2}$, \cite[Definition 2.1]{AlperFS}. Indeed, the only condition of that definition which we haven't yet checked is the fact that the morphism $\cD rap(X; L_1,L_2)\to\cY_{L_1,L_2}$ is \'etale. By \cite[Lemma 40.17.3]{stacks-project} it is enough to show that the morphism $\cD rap(X; L_1,L_2)\to \cC oh^{ss}(X,2P)$ is unramified, since finiteness has already been established. 
This follows from the differential study of the Quot scheme,  \cite[Proposition 2.2.7]{HL}, applied to diagrams of the form
\begin{equation}
\label{diagr: Drap(L_1,L_2)}
\begin{gathered}
\xymatrix{ 
S\times_{\cC oh^{ss}(X,2P)}{\cD rap(X; L_1,L_2)}\ar[r] \ar[d] & \cD rap(X; L_1,L_2) \ar[d] \\
S \ar[r] & \cC oh^{ss}(X,2P)
}
\end{gathered}
\end{equation}
as before and from the fact that $\Hom(L_1,L_2)=0$, as $L_1\ncong L_2$.

\subsection{Condition (2)}

We next turn our attention to condition (2). Let $x\in\cX(\C)$ be a closed point and $G_x$ its stabilizer. We will do a case by case analysis depending on the type of a representative $E$ of $x$.

\subsubsection{The stable case.}
The case when $E$ is stable is quickly dealt with. By openness of stability, see Theorem~\ref{thm:openness-semistability}, it suffices to construct a local quotient presentation at $[E]$ with the desired properties in the open substack $\cC oh_{(X,\omega),\tau}^{s} \subset \cX$ of stable sheaves on $X$. We consider the corresponding open $G$-invariant subspace $R^s \subset R$ of stable quotients and choose a point $p \in R^s$ mapping to $x$ under the natural map $[R/G] \to \cX$.  We note as a first point that every $G$-orbit in $R^s$ is closed in $R$ by Proposition~\ref{prop:characterising_closed_points}, as a second point that for every point $p \in R^s$ the stabiliser group $G_p$ is isomorphic to $\mathbb{C}^*$ as $\cE_p$ is simple, and as a third point that it follows from Lemma~\ref{lemma:action_of_homotheties} that $G_p$ acts trivially on the slice $S \ni p$ whose existence is guaranteed by Proposition~\ref{prop:slice} and which, shrinking $S$ if necessary, we may assume to be contained in $R^s$. As every $G$-orbit in $R^s$ is closed in $R$, condition 2(a) is fulfilled for the quotient presentation induced by $S$, whereas condition 2(b) is guaranteed to hold by Corollary~\ref{cor:stab_preserving}. This concludes the discussion of the stable case. 

\subsubsection{The case of a polystable point with $\Aut(E)\cong\GL(2,\C)$.}
If $E$ is polystable with $\Aut(E)\cong\GL(2,\C)$, $x$ is a point in the open substack $\cC oh_{(X,\omega),\tau}^{ss}\cap\cC oh_{(X,h),\tau}^{ss}$ of $\cC oh_{(X,\omega),\tau}^{ss}$. Let $R^{h\text{-ss}}$ be the corresponding $G$-invariant open subscheme of $R$ consisting of $h$-semistable quotients and let $R^{ss}$ be the $G$-invariant open subscheme of $R$ consisting of $\omega$-semistable quotients. Both subschemes contain the $G$-orbit $G\acts p$  corresponding to $E$. As $\cC oh_{(X,h),\tau}^{ss}$ admits a good moduli space $M^{h\text{-ss}}$ and as $x$ is a closed point of $\cC oh_{(X,h),\tau}^{ss}$, there exists an open, $G$-invariant subscheme $\cU \subset R^{h\text{-ss}} \cap R^{ss}$ that contains $x$ and is saturated with respect to the moduli map $R^{h\text{-ss}} \to M^{h\text{-ss}}$. It follows that the restriction of the moduli map to $\cU$ yields a good quotient $\cU \to \cU\hq G \hookrightarrow M^{h\text{-ss}}$ for the $G$-action on $\cU$. The desired quotient presentation is then produced by an application of Luna's slice theorem, see for example \cite{D_Luna}, 
at the closed orbit $G\acts p \subset \cU$. 

\subsubsection{The case of a polystable point with $\Aut(E)\cong\C^*\times\C^*$.}
Under the assumption, the point $x$ is in the image $\cY$ of $\cD rap^{ss}(X;P,P)$ which is proper over $\cX$. Moreover, inside $\cD rap^{ss}(X;P,P)$ we have a closed substack corresponding to the condition $\cF_1\cong\cF_2/\cF_1$. Let $\cZ$ be the image of this closed substack in $\cY$. The point   $x$ lies in the complement of $\cZ$, so we may assume that the image of the local quotient presentation $f$ guaranteed by Proposition~\ref{prop:slice} is contained in the complement of $\cZ$ too. We will use the notation of that Proposition throughout the rest of the proof.

Recall the following properties of the action of $\Aut(E)$ on $S = \mathrm{Spec}(A)$:
\begin{enumerate}
 \item The subgroup of homotheties $\C^*\cdot \Id_E \subset \mathrm{Aut}(E)$ acts trivially on $S$, see Lemma~\ref{lemma:action_of_homotheties}; the action of $\Aut(E)$ hence factors over an action of $\mathbb{C}^* \cong (\mathbb{C}^* \times \mathbb{C}^*)/ \mathbb{C}^*$ on $S$.
 \item The fibres $F$ over the fixed points for the action of $\Aut(E)$ have $\Aut(F)\cong\C^*\times\C^*$ and conversely, if $\mathrm{Aut}(\mathcal{F}_t) = \C^* \times \C^*$ for some $t \in S$, then $t$ is contained in the $\mathrm{Aut}(E)$-fixed point set in $S$, see Corollary~\ref{cor:stab_preserving}.
 \end{enumerate}

 Finally, we claim that, possibly after $S$ has been shrinked further, the orbits of non-polystable fibres $F$ by the $\Aut(E)$-action are not closed. 

Suppose by contradiction that the assertion does not hold. By semicontinuity arguments the set of points of $S$ parametrising non-polystable fibres is constructible, and so is the set of points belonging to closed orbits, as follows from Luna's slice theorem. Both sets are invariant under the $\Aut(E)$-action. If the closure of their intersection does not contain $s$, we just shrink $S$ so that it no longer intersects this closure. If it does contain $s$, we get a curve $C$ through the image $o$ of $s$ in $S \hq \Aut(E) := \mathrm{Spec}(A^{\Aut(E)})$ whose general points correspond to non-trivial closed $\Aut(E)$-orbits in $S$ parametrising non-polystable sheaves. 
 
Let $Y\subset \Spec A$ be an irreducible component of the inverse image of $C$ in $\Spec A$ containing such a general orbit. 
Then $Y$ is a surface with a $\C^*$-action and a good quotient $\pi :Y \twoheadrightarrow C \subset S \hq \Aut(E)$ .
The only point of $Y$ corresponding to a polystable sheaf is $s$. Let $Y^\circ:=Y\setminus\{ s\}$ and let $q:X\times Y\to Y$, and $\pi:Y\to C$ be the natural  projections.
For simplicity we denote again by $\cF$ the universal sheaf on  $X\times Y$. 

Any fibre $F$ of $\cF$ over a point $y$ of $Y$ appears as the middle term of an extension $0\to L_1\to F\to L_2\to 0$,
where $L_1$ and $L_2$ are stable of fixed Hilbert polynomial $P$ and non-isomorphic. 
If $s \neq y \in Y$, this extension is non-trivial and unique up to a multiplicative constant in $\C^*$, see Corollary \ref{cor:extensions}. 
Moreover, the sheaves parametrised by points lying in a fibre of $\pi$ over some point of $C\setminus \{o\}$ are all isomorphic, and thus they all correspond to the same extension, again up to multiplication by a non-zero constant.
Let $\cF_s=L_{1,s}\oplus L_{2,s}$. The natural morphism $\Quot_{\cF/Y}(P)\to Y$ is one-to-one over $Y^\circ$, whereas its fibre over $s$ has two points corresponding to $L_{1,s}$ and $ L_{2,s}$.

These properties imply that only one of the two quotients $L_{1,s}$ and $ L_{2,s}$ over $s$, say $ L_{2,s}$, is in the closure $Y'$ of $\Quot_{\cF/Y}(P)_{Y^\circ}$ in $\Quot_{\cF/Y}(P)$.  
We denote by $s'$ the point of $Y'$ lying over $Y$ and by $Y'_o$ the fibre over $o$ of the composition $Y'\to Y\to C$.
On $X\times Y'$ we thus get two rank one universal sheaves, the universal kernel and the universal quotient, which we denote by $\cL_1$ and $\cL_2$. 
The universal extension $0\to \cL_1\to \cF\to \cL_2\to 0$
on $X\times Y'$ restricted to $X\times Y'_o$  gives rise to a natural map $Y'_o\to W$, where $W:=\E( L_{2,s},L_{1,s})=\Ext^1_{\cO_X}(L_{2,s},L_{1,s})$ is the space of extensions of $L_{2,s}$ by $L_{1,s}$, see \cite[Corollary 3.4]{Lange}.  At the level of germs of complex analytic spaces, we get the following commutative diagram:
\begin{equation}
\label{diagr:germs}
\begin{gathered}
\xymatrix{
(Y',s')\ar[d] & (Y'_o,s')\ar[d]\ar[r] \ar[l] & (W,0)\ar[d] \\
(Y,s) \ar[dr]& (Y_o, s) \ar[l]\ar[d] \ar[r] & (S,s)\ar[d]\\
&(C,o)\ar[r]&(S \hq \Aut(E), o).
}
\end{gathered}
\end{equation}

The automorphism group $\Aut(E) \cong \mathbb{C}^* \times \mathbb{C}^*$ acts equivariantly on the induced diagram of the respective tangent spaces of the germs above.  
The map\footnote{Cf.~Lemma~\ref{lem:action_on_tangent}.} \[T_0W\to T_s S \cong\E(L_{1,s}\oplus L_{2,s},L_{1,s}\oplus L_{2,s}),\] is the one described by Lemma \ref{lemma:extensions-deformations}, and the group  action is induced by the action of $\Aut(L_{1,s})\times\Aut(L_{2,s})\cong\C^*\times\C^*$; in particular, note that the diagonal $\C^*\subset\C^*\times\C^*$ operates trivially on both sides, cf.~the proof of Lemma \ref{lemma: extensions}. Almost by definition, the character of the induced $\C^*$-action on $W$ is $+1$ or $-1$ depending on the isomorphism $\C^*\cong(\C^*\times\C^*)/\C^*$ we have chosen. In the sequel we will assume it to be 
 $+1$.  Choose a $\mathbb{C}^*$-equivariant closed embedding $\psi :S \hookrightarrow V$ into a finite-dimensional $\mathbb{C}^*$ representation space $V$. By composing with the translation by the $\mathbb{C}^*$-fixed point $\psi(s)$ if necessary, we may assume that $\psi(s) = 0 \in V$. Let $V=V_+\oplus V_0\oplus V_-$ be the decomposition of $V$ into subspaces according to the sign of the characters of the $\C^*$-action on $V$.
From Diagram~\eqref{diagr:germs} and the consideration regarding the weight of the action on $W$ we infer that $\psi$ embeds $Y_o$ into $V_+$. 
We claim that this implies that $\psi$ embeds the reduced space $Y_{\mathrm{red}}$ into $V_+\oplus V_0$. Indeed, if not, there would exist a sequence of points $z_n=(z_{n,+},z_{n,0},z_{n,-})$ in $\psi(Y_{\mathrm{red}})\setminus (V_+\oplus V_0)$ converging to $0$ in $V$. 
Then, we can find a sequence of elements $\lambda_n\in \C^*$ with $\lim_{n \to \infty} \lambda_n = 0$ such that $\Vert\lambda_n \acts z_{n,-} \Vert=1$ for each $n\in \N$. It follows that $(\lambda_n \acts (z_{n,+},z_{n,0}))_{n\in \N}$ converges to $0$ in $V_+\oplus V_0$ and that a subsequence of $(\lambda_n \acts z_{n})_{n \in \N}$ converges to a point of norm $1$ in $\psi(Y_{\mathrm{red}})\cap V_-$. Such a point would lie in $Y_o$ contradicting the fact that $Y_o$ is mapped to $V_+$ by $\psi$. Thus, $Y_{\mathrm{red}}$ is embedded into $V_+\oplus V_0$, as claimed\footnote{An alternative proof can be given using \cite[Rem.~1.1 and Lem.~1.7]{MR1273375} after normalising $Y_{\mathrm{red}}$. }.
  However, the fact that the only closed orbits of the $\C^*$-action on $V_+\oplus V_0$ are the fixed points contradicts our assumptions on $Y$.
  
This concludes the proof of Theorem~\ref{thm:existence}, establishing the existence of a good moduli space $M_{(X,\omega),\tau}^{ss}$. 
\end{proof}

\begin{lemma}\label{lemma: extensions}
If $L_1\ncong L_2$, then 
$\cD rap(X; L_1,L_2)\cong[W/\C^*\times\C^*]$. 
\end{lemma}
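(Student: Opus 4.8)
The plan is to exhibit the scheme $W = \Ext^1(L_2,L_1)$, equipped with its natural $\Gamma := \C^* \times \C^* = \Aut(L_1) \times \Aut(L_2)$-action, as a $\Gamma$-torsor over $\cD rap(X;L_1,L_2)$, which immediately yields the claimed presentation as a quotient stack. First I would construct the classifying morphism $u \colon W \to \cD rap(X;L_1,L_2)$. The universal extension $0 \to L_{1,W} \to \cE \to L_{2,W} \to 0$ on $X \times W$ recalled in Section~\ref{subsect:sheafextensions} has $L_{1,W} = p_X^* L_1$ as a subsheaf with quotient $L_{2,W} = p_X^* L_2$, so the flag $L_{1,W} \subset \cE$ is a family of objects of $\cD rap(X;L_1,L_2)$ parametrised by $W$ and defines $u$. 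Recall from Remark~\ref{remark:extensions}(1) that $\Gamma$ acts on $W$ by $(\alpha,\beta)\cdot\xi = \alpha_*(\beta^{-1})^*\xi$; writing $\alpha = a\cdot\Id_{L_1}$ and $\beta = b \cdot \Id_{L_2}$ this is scalar multiplication by $ab^{-1}$, so the diagonal $\C^*$ acts trivially and the effective action is the scaling action of $\C^* \cong \Gamma/\C^*$. By construction of this action the extension of class $(\alpha,\beta)\cdot\xi$ is isomorphic, as a flag, to the one of class $\xi$ via an isomorphism inducing $\alpha$ on $L_1$ and $\beta$ on $L_2$; this provides the descent datum making $u$ invariant and hence a factorisation $\bar u \colon [W/\Gamma] \to \cD rap(X;L_1,L_2)$.

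Next I would prove that $u$ is surjective (as a smooth cover). Given an object $0 \to \cF_1 \to \cF_2 \to \cF_3 \to 0$ over a scheme $S$, the sheaves $\cF_1$ and $\cF_3$ are flat families with fibres $\cong L_1$, resp.\ $\cong L_2$; since $L_1$ and $L_2$ are stable, hence simple, Grauert's theorem shows that $q_*\mathcal{H}om(p_X^*L_1,\cF_1)$ and $q_*\mathcal{H}om(p_X^*L_2,\cF_3)$ are line bundles $M_1, M_2$ on $S$ and that the evaluation maps induce isomorphisms $\cF_1 \cong p_X^*L_1 \otimes q^*M_1$ and $\cF_3 \cong p_X^*L_2 \otimes q^*M_2$. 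Passing to a Zariski cover of $S$ trivialising $M_1$ and $M_2$, we may assume $\cF_1 \cong p_X^*L_1$ and $\cF_3 \cong p_X^*L_2$, and the resulting relative extension of $p_X^*L_2$ by $p_X^*L_1$ over $X \times S$ is then classified by a morphism $S \to W$ pulling $\cE$ back to $\cF_2$; here we use that, because $\Hom(L_2,L_1)=0$ (Lemma~\ref{lem:morphisms} together with $L_1 \ncong L_2$), the relative $\Ext$-functor is represented by the affine space $W$ with its universal extension, cf.~\cite[Corollary 3.4]{Lange}. This exhibits every object of $\cD rap(X;L_1,L_2)$ as locally pulled back from $u$.

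Finally I would identify the fibre product $W \times_{\cD rap(X;L_1,L_2)} W$ with $\Gamma \times W$. An $S$-point consists of $\xi_1,\xi_2 \colon S \to W$ together with an isomorphism $\phi$ of the corresponding flags; such a $\phi$ induces $(\phi_1,\phi_2) \in \Aut(p_X^*L_1)\times\Aut(p_X^*L_2) = \Gamma(S)$ satisfying $\xi_2 = (\phi_1,\phi_2)\cdot\xi_1$, and conversely, for fixed $(\phi_1,\phi_2)$ the isomorphisms of extensions inducing it form a torsor under $\Hom(L_2,L_1) = 0$, so $\phi$ is uniquely determined by $(\phi_1,\phi_2)$. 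Hence $(\xi,g)\mapsto (\xi,g\cdot\xi,\phi_g)$ defines an isomorphism $\Gamma \times W \xrightarrow{\sim} W \times_{\cD rap(X;L_1,L_2)} W$ compatible with the groupoid structure. Together with the surjectivity of $u$ this shows that $u$ is a $\Gamma$-torsor, whence $\cD rap(X;L_1,L_2) \cong [W/\Gamma]$ as desired. The main obstacle throughout is the repeated use of the hypothesis $L_1 \ncong L_2$, which via $\Hom(L_2,L_1)=0$ both makes the relative $\Ext$-functor representable by $W$ (so that families of flags are genuinely classified by maps into $W$) and forces the rigidity of isomorphisms between extensions needed for the clean torsor structure; without this vanishing the representability and the uniqueness of $\phi$ would both break down.
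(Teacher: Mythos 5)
Your proof is correct and follows essentially the same route as the paper: both arguments rest on Lange's universal extension over $W$, on the fact that a flat family with fibres isomorphic to the simple sheaf $L_i$ is a line-bundle twist of the constant family, and on the vanishing $\Hom(L_2,L_1)=0$ coming from $L_1\ncong L_2$. The only difference is one of packaging — you verify the torsor property by computing $W\times_{\cD rap(X;L_1,L_2)}W\cong(\C^*\times\C^*)\times W$, whereas the paper sketches the two mutually inverse functors directly via descent along the associated principal bundle.
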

\begin{proof} We only sketch the line of the argument, which is most likely already present somewhere in the literature on the subject.
 
We choose to identify $\C^*\times\C^*$ with $\Aut(L_1)\times \Aut(L_2)$. Then  the induced action on  
$W=\Ext^1(X; L_2,L_1)$ is given by $w(\theta_1,\theta_2)=\theta_1\theta_2^{-1}w$. 
With this convention if $w\in W$ is the class of an extension
$0\to L_1\stackrel{\alpha}\longrightarrow F\stackrel{\beta}\longrightarrow L_2\to 0$ then $\theta_1\theta_2^{-1}w$ is represented by the second line of the diagram
\begin{equation*}
\label{diagr: extensions}
\begin{gathered}
\xymatrix{ 
0\ar[r]& 
L_1\ar[r]^\alpha \ar[d]^{\theta_1\Id_{L_1}} & 
F\ar[r]^\beta \ar[d]^{\Id_{F}} & 
L_2\ar[r] \ar[d]^{\theta_2\Id_{L_2}}& 
0 \\
0\ar[r]& L_1\ar[r]^{\theta_1^{-1}\alpha} & F\ar[r]^{\theta_2\beta} & L_2\ar[r]& 0. 
}
\end{gathered}
\end{equation*}

An object of  $[W/\C^*\times\C^*]$ is a triple $(T, P\stackrel{\pi}\longrightarrow T, P\stackrel{f}\longrightarrow W)$, where T is a scheme, $ P\stackrel{\pi}\longrightarrow T$ is a principal $\C^*\times\C^*$-bundle and $P\stackrel{f}\longrightarrow W$ is a  $\C^*\times\C^*$-equivariant morphism, cf. \cite[Definition 3.1]{Alper}. To such an object we associate an object of $\cD rap(X; L_1,L_2)$ in the following way. By \cite[Chapter 7]{LePotier_Lectures}, 
\cite[Corollary 3.4]{Lange} there exists a universal extension 
\begin{equation}
\label{diagr: universal}
0\to L_{1,W}\to \cF\to L_{2,W}\to 0
\end{equation}
on $W\times X$ which we pull back to $P\times X$. The action of $\C^*\times\C^*$ on $W$ induces a $\C^*\times\C^*$-linearisation on the sheaves $L_{1,P}$ and $\cF_P$, which thus descend to  $T\times X$ and give the desired object in $\cD rap(X; L_1,L_2)$ over $T$, \cite[Theorem 4.2.14]{HL}.

For the converse we use the fact that the moduli space $M^s(X,P)$ admits local universal families (cf. \cite[Appendix 4D. VI]{HL}), so for any object $(\cF_1,\cF_2)$ of $\cD rap(X; L_1,L_2)$ over $S$, one has $\cF_1\cong L_{1,S}\otimes\cL_{1,X}$ and $\cF_2/\cF_1\cong L_{2,S}\otimes\cL_{2,X}$, for suitable line bundles $\cL_{1}, \cL_2$ on $S$.
Let $P_1\to S$, $P_2\to S$ be the $\C^*$-principal bundle associated with the line bundles $\cL_{1}, \cL_2$ on $S$ and let $P:=P_1\times_SP_2$. On $P\times X$ we get a "tautological" extension
$$0\to L_{1,P}\to \cF_2\to L_{2,P}\to 0,$$
which is the pullback of the universal extension \ref{diagr: universal} by means of some  equivariant morphism $P\stackrel{f}\longrightarrow W$ by \cite[Corollary 3.4]{Lange} again. The triple $(S,P\to S, P\stackrel{f}\longrightarrow W)$ is the corresponding object of  $[W/\C^*\times\C^*]$ that we were looking for.
\end{proof}

\section{Properties of the moduli space}\label{sect:properties}
We start by discussing functorial properties of $M_{(X,\omega),\tau}^{ss}$. Analogously to \cite[Section 4.1]{HL} we consider the functors $\underline{M'}:=\underline{M'}_{(X,\omega),\tau}:(Sch/\C)\to (Sets)$, $\underline{M}:=\underline{M'}/\sim$, where, for a scheme $S$ over $\C$, $\underline{M'}(S)$ is the set of isomorphism classes of flat families over $S$ of semistable sheaves of type $\tau$ on $X$ and two such families $F,E\in\underline{M'}(S)$ are equivalent through $\sim$
if there exists a line bundle $\cL\in\Pic(S)$ such that $E$ is isomorphic to $F\otimes \cL_X$. As explained in \cite[Section 4.1]{HL}, if an algebraic space $M$ corepresents the functor $\underline{M'}$, then it also corepresents $\underline{M}$ and the other way round. Finally, using \cite[Lemma 4.3.1]{HL} and \cite[Theorem 4.16(vi)]{Alper13} we get:

\begin{Prop}\label{prop:corepresentaion} $M_{(X,\omega),\tau}^{ss}$ corepresents the functors $\underline{M'}_{(X,\omega),\tau}$ and $\underline{M}_{(X,\omega),\tau}$.
\end{Prop}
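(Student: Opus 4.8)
The plan is to reduce the statement to two citable facts. Recall that $M_{(X,\omega),\tau}^{ss}$ is, by Theorem~\ref{thm:existence}, the good moduli space of the algebraic stack $\cX = [R/G]$. To corepresent the functor $\underline{M'}$, I must show that for every scheme $S$ and every flat family $E \in \underline{M'}(S)$ of semistable sheaves of type $\tau$ there is a canonical morphism $S \to M^{ss}$, functorially in $S$, and that $M^{ss}$ is universal with this property. The natural route is to pass through the stack: a family $E \in \underline{M'}(S)$ is exactly an object of $\cX(S)$, hence corresponds to a morphism $S \to \cX$, and composing with the good moduli space morphism $\cX \to M^{ss}$ of \cite[Theorem 4.16(vi)]{Alper13} yields the required morphism $S \to M^{ss}$.

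First I would make precise the identification between objects of $\cX$ over $S$ and flat families of semistable sheaves parametrised by $S$, which was already established in Section~\ref{subsubsect:quotient_realisation} via the frame-bundle construction following \cite[Lemma 4.3.1]{HL}; this is where the passage between the Quot-theoretic presentation $[R/G]$ and honest sheaf families is recorded. Next I would invoke the defining property of a good moduli space: by \cite[Theorem 4.16(vi)]{Alper13}, the morphism $\pi:\cX \to M^{ss}$ is universal among morphisms from $\cX$ to algebraic spaces, i.e.\ $M^{ss}$ corepresents the functor $S \mapsto \Hom(S, \cX)/(\text{2-isomorphism})$ of isomorphism classes of objects of $\cX(S)$. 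Combining these two facts identifies that functor with $\underline{M'}_{(X,\omega),\tau}$ and shows directly that $M^{ss}$ corepresents $\underline{M'}$.

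The main subtlety, and the reason the frame-bundle discussion is needed, is that two families $E, E' \in \underline{M'}(S)$ define the \emph{same} morphism $S \to \cX$ precisely when they differ by twisting with a line bundle pulled back from $S$: the ambiguity in choosing the isomorphism $V \to H^0(F(m))$ in the construction of $\cX \cong [R/G]$ corresponds exactly to the $G$-twist, and hence to the equivalence relation $\sim$ on $\underline{M'}$. This is the point I would verify most carefully, as it is what makes the stack-theoretic $\Hom$-functor agree with $\underline{M'}/{\sim} = \underline{M}$ on the nose rather than only up to this twist.

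Finally, to conclude for both functors simultaneously, I would recall the elementary observation cited from \cite[Section 4.1]{HL} that an algebraic space corepresents $\underline{M'}$ if and only if it corepresents $\underline{M}$, since the two functors have the same associated sheafification in the relevant topology (twisting by a line bundle does not change the induced point of a space corepresenting the functor). The hard conceptual work has already been done in establishing that $M^{ss}$ is a good moduli space; what remains here is bookkeeping, so I do not anticipate a genuine obstacle, only the care required to track the line-bundle twist through the frame-bundle description and to correctly apply the universal property recorded in \cite[Theorem 4.16(vi)]{Alper13}.
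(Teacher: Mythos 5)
Your proposal follows the paper's own proof essentially verbatim: the paper likewise deduces the statement by combining \cite[Lemma 4.3.1]{HL} (the frame-bundle identification of flat families with objects of $[R/G]\cong\cX$) with \cite[Theorem 4.16(vi)]{Alper13} (universality of the good moduli space morphism for maps to algebraic spaces), together with the observation from \cite[Section 4.1]{HL} that corepresenting $\underline{M'}$ is equivalent to corepresenting $\underline{M}$. One small correction to the point you flag as the "main subtlety": since $\cX$ is by definition the stack of semistable sheaves, $\Hom(S,\cX)$ modulo $2$-isomorphism is $\underline{M'}(S)$ (families up to isomorphism), not $\underline{M}(S)$ — the line-bundle twist is absorbed by the \cite[Section 4.1]{HL} equivalence between the two corepresentability statements, not by the identification with the stack, so your argument goes through but that particular step is stated slightly wrongly.
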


Recall that two semistable sheaves on $X$ are called $S$-equivalent if their Jordan-H\"older graduations are isomorphic. By  \cite[Theorem 4.16(iv)]{Alper13}, \cite[Lemma 4.1.2]{HL} and our previous considerations we immediately obtain:

\begin{Prop}\label{prop:points} The closed points of $M_{(X,\omega),\tau}^{ss}$ correspond precisely to $S$-equivalence classes of semistable sheaves of type $\tau$ on $X$.
\end{Prop}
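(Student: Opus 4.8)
The plan is to deduce the statement formally from the existence of the good moduli space morphism $\pi \colon \cX \to M^{ss}$ produced in Theorem~\ref{thm:existence}, combined with the description of closed points obtained in Section~\ref{subsect:quotient_presentations}. First I would recall the relevant general property of good moduli spaces: by \cite[Theorem~4.16(iv)]{Alper13} the morphism $\pi$ is surjective on points and restricts to a bijection between the closed points of $|\cX|$ and the (closed) points of $M^{ss}$; equivalently, two $\C$-points of $\cX$ have the same image under $\pi$ precisely when the closures of their associated points in $|\cX|$ meet. This reduces the task to (a) identifying the closed points of $\cX$, and (b) describing when $\pi$ identifies two arbitrary semistable sheaves.

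For (a) I would invoke Proposition~\ref{prop:characterising_closed_points}, according to which the closed points of $\cX$ are exactly those represented by polystable sheaves of type $\tau$. For (b), the uniqueness of polystable degenerations established earlier shows that for any $\C$-point $y$ the closure $\overline{\{y\}}$ contains a single closed point, namely the one represented by the Jordan--H\"older graded object $\mathrm{gr}^{JH}(\cF_y)$, whose existence and uniqueness up to isomorphism is guaranteed by Proposition~\ref{prop:JH}. Because $\pi$ identifies two points exactly when their closures intersect, it follows that $\pi$ sends a semistable sheaf $F$ to the point determined by $\mathrm{gr}^{JH}(F)$, and that $\pi(F) = \pi(F')$ holds if and only if $\mathrm{gr}^{JH}(F) \cong \mathrm{gr}^{JH}(F')$.

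It then remains to phrase this in terms of $S$-equivalence. By definition two semistable sheaves are $S$-equivalent precisely when their Jordan--H\"older graded objects are isomorphic, so the assignment $F \mapsto \mathrm{gr}^{JH}(F)$ induces a bijection between $S$-equivalence classes of semistable sheaves of type $\tau$ and isomorphism classes of polystable sheaves of type $\tau$, i.e.\ between $S$-equivalence classes and closed points of $\cX$; this is exactly the bookkeeping recorded in \cite[Lemma~4.1.2]{HL} transported to the present setting. Composing this bijection with the one between closed points of $\cX$ and closed points of $M^{ss}$ furnished by \cite[Theorem~4.16(iv)]{Alper13} gives the assertion.

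Since all the substantial geometry has been carried out beforehand, this is essentially a formal deduction, and I expect no real obstacle; the only points requiring a little care are matching the precise statement of the good moduli space theorem (in particular the fact that closed points of $M^{ss}$ are hit by \emph{unique} closed points of $\cX$, which rests on the uniqueness of polystable degenerations already proven), and checking that the $\sim$-equivalence of the functor $\underline{M}$ does not interfere at the level of $\C$-points. The latter is harmless because $\Pic(\Spec \C)$ is trivial, so twisting a family by a line bundle pulled back from the base has no effect on closed points.
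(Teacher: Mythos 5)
Your proposal is correct and follows essentially the same route as the paper, which likewise deduces the statement from \cite[Theorem~4.16(iv)]{Alper13}, \cite[Lemma~4.1.2]{HL}, and the earlier characterisation of closed points of $\cX$ as polystable sheaves together with the uniqueness of polystable degenerations. You have simply written out in full the details that the paper leaves implicit.
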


We next prove that the constructed moduli space is separated. This will follow from a refinement of Langton's  valuative criterion for separation which is only formulated in \cite{Langton} for two semistable sheaves, at least one of which is stable. But whereas Langton's theorem is stated for slope-semistable sheaves, we are working within the abelian category of Gieseker-Maruyama-semistable sheaves of fixed Hilbert polynomial with respect to $\omega$ (and $0$), cf.~Proposition \ref{prop:abelian_category}. This fact is essential for the following criterion to hold.  

The set-up is the following. We consider a discrete valuation ring $A$ over $\C$ with maximal ideal $\mg$ generated by a uniformising parameter $\pi$. We set $K$  the field of fractions of $A$.    We denote by $X_K:=X\times\Spec(K)$ the generic fibre and by $X_\C:=X\times\Spec(\C)$ the special fibre of $X_K:=X\times\Spec(K)$ and by $i:X_K\to X_A$, $j:X_\C:=X\to X_A$ the inclusion morphisms. We denote furthermore by $\xi$ and by $\Xi$ the generic points of $X_\C$ and of $X_K$, respectively. Note that $\cO_{X_A,\xi}$ is a discrete valuation ring with maximal ideal generated by $\pi$.

\begin{Prop}[Valuative criterion for separation]\label{prop:separation} Let $E_K$ be a 
 torsion-free sheaf of rank $r$ on $X_K$ and let $E_1, E_2\subset i_*E_K$ be two coherent sheaves on $X_A$ such that $i^*E_1=i^*E_2=E_K$ and such that $E_{1,\C}:=j^*E_1$ and $E_{1,\C}:=j^*E_2$ are semistable on $X_\C$. Then 
$$\mathrm{gr}^{JH}(j^*E_1)\cong\mathrm{gr}^{JH}(j^*E_2).$$
\end{Prop}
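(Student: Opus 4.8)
The plan is to carry out a Langton-type elementary-modification argument, but to run it entirely inside the abelian category $\cA:=\Coh^{ss}(X,\omega,p)$ of Proposition~\ref{prop:abelian_category}; this is exactly what makes the two special fibres comparable and is the reason Langton's hypothesis that one sheaf be stable can be dropped. First I would reduce to the case $E_2\subseteq E_1$: since $i_*E_K$ is $\pi$-torsion-free, both $E_1$ and $E_2$ are $A$-flat lattices inside $i_*E_K$ with the same generic fibre, hence commensurable, and replacing $E_2$ by $\pi^N E_2$ for $N\gg 0$ we may assume $E_2\subseteq E_1$. This replacement affects neither the semistability nor the isomorphism class of $j^*E_2$, because multiplication by $\pi^N$ is an automorphism of $i_*E_K$. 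As the $\omega$-Hilbert polynomial is constant in flat families, $j^*E_1$ and $j^*E_2$ share the same reduced Hilbert polynomial $p$, so both are objects of $\cA$. Since $\cA$ is abelian, noetherian and artinian, two of its objects have isomorphic $\mathrm{gr}^{JH}$ exactly when they have the same class in $K_0(\cA)$, the free abelian group on the stable sheaves of reduced Hilbert polynomial $p$, cf.\ Proposition~\ref{prop:JH}. It therefore suffices to prove $[j^*E_1]=[j^*E_2]$ in $K_0(\cA)$.

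The technical heart is the following. Put $G:=E_1/E_2$, a $\pi$-power-torsion sheaf on $X_A$. Applying $\otimes_A\C$ to $0\to E_2\to E_1\to G\to 0$ and using the flatness of $E_1$ over $A$ yields the four-term exact sequence
\[0\to G[\pi]\to j^*E_2\to j^*E_1\to G/\pi G\to 0\]
on $X_\C$. Let $I$ denote the image of the middle map. Being simultaneously a quotient of the semistable sheaf $j^*E_2$ and a subsheaf of the semistable sheaf $j^*E_1$, it satisfies $p\le p_I\le p$, hence $p_I=p$; together with Lemma~\ref{lem:morphisms} and a short torsion-freeness check this shows that $I$, the kernel $G[\pi]$ and the cokernel $G/\pi G$ are all torsion-free and semistable, i.e.\ objects of $\cA$. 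This is precisely the place where the abelian-category structure is indispensable.

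I would then induct on the length $\ell$ of $G$ at the generic point $\xi$ of $X_\C$. If $\ell=0$, then $G/\pi G$ is both torsion (a quotient of the torsion sheaf $G$) and torsion-free, hence zero, forcing $G=0$ and $j^*E_1\cong j^*E_2$. For $\ell\ge 1$ I consider the single elementary modification $L:=E_2+\pi E_1$, which satisfies $\pi E_1\subseteq L\subseteq E_1$ and $E_2\subseteq L$ with $L/E_2$ of length $<\ell$. The elementary-modification sequences $0\to B\to j^*E_1\to C\to 0$ and $0\to C\to j^*L\to B\to 0$, with $C=E_1/L=G/\pi G\in\cA$, identify $B$ as a subsheaf of $j^*E_1$ with $p_B=p$, hence $B\in\cA$; consequently $j^*L$ is an extension of objects of $\cA$ and so lies in $\cA$, while $[j^*E_1]=[B]+[C]=[j^*L]$ in $K_0(\cA)$. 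The induction hypothesis applied to $E_2\subseteq L$ gives $[j^*L]=[j^*E_2]$, and combining yields $[j^*E_1]=[j^*E_2]$, as desired. The step I expect to be the main obstacle is exactly the semistability of the intermediate fibre $j^*L$: a priori the lattices interpolating between $E_1$ and $E_2$ have special fibres that need not be semistable, and the whole argument rests on the closure of $\cA$ under those subsheaves, quotients and extensions that force $C\in\cA$ and hence $j^*L\in\cA$.
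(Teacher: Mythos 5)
Your argument is correct and is essentially the paper's: both run Langton's elementary-modification scheme through the abelian, noetherian and artinian category $\Coh^{ss}(X,\omega,p)$ of Proposition~\ref{prop:abelian_category}, which is precisely what allows Langton's stability hypothesis to be dropped. The only differences are organisational --- you induct on the length of $(E_1/E_2)_\xi$ using the canonical modification $L=E_2+\pi E_1$ and track classes in $K_0$ of that category, whereas the paper writes out an explicit chain of edges dictated by the elementary divisors at $\xi$ and compares $\mathrm{gr}^{JH}$ directly at each edge --- but the semistability of the intermediate special fibres is obtained by the same mechanism in both cases, namely that the relevant image sheaf is the image of a morphism between the two given semistable special fibres.
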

\begin{proof}
We follow closely Langton's proof, \cite[pages 101-102]{Langton}. 
Note that $E_1$ and $E_2$ are flat over $\Spec(A)$ since they are torsion free and $A$ is a discrete valuation ring. Moreover, since they coincide over $\Spec(K)$, their restrictions to $X_\C$ have the same Hilbert polynomial.

By \cite[Proposition 6]{Langton} any rank $r$ free $\cO_{X_A,\xi}$ submodule $M$ of $(E_K)_\Xi$ gives rise to a unique torsion-free coherent sheaf $E$ of $i_*E_K$ on $X_A$ such that  $i^*E=E_K$, $E_\xi=M$ and $j^*E$ is torsion-free on $X_\C$. 
Langton introduces an equivalence relation on such submodules by putting $M\sim\pi^nM$ and calls two equivalence classes $[M]$, $[M']$ {\it adjacent} if there exists a direct sum decomposition $M=N\oplus P$ such that $M'=N+\pi M$. 
Equivalent modules induce isomorphic extensions of $E_K$ to coherent subsheaves on $X_A$ as in  \cite[Proposition 6]{Langton}. 
This is no longer true in general for adjacent classes. In fact for adjacent classes $[M]$ and $[M']$ we may suppose that $M$ has a basis $(e_1,...,e_r)$ over $\cO_{X_A,\xi}$  such that for a suitable $s\in\{1,...,r\}$ the module $M'$ admits $(e_1, ..., e_s,\pi e_{s+1},...,\pi e_r)$ as a basis. 
If $E$, $E'$ denote the coherent sheaf extensions of $M$ and $M'$ to $X_A$, then the inclusion of $\cO_{X_A,\xi}$-modules $M'\subset M$ induces an inclusion of coherent sheaves $E'\subset E$ on $X_A$ which restricts to  a morphism $\alpha: E'_\C\to E_\C$ on $X_\C$, whose image is the unique saturated coherent subsheaf $F$ of $E_\C$ such that $F_\xi$ is the $\cO_{X_\C,\xi}$-vector space generated by $\bar e_1,...,\bar e_s$, where the elements $\bar e_j$ are the images of $e_j$ under $M\to (M\otimes \cO_{X_A,\xi}/\pi \cO_{X_A,\xi})$. 
In   \cite[Proposition 7]{Langton} it is shown that $F$ is saturated in $E_\C$ and that $E'$ appears as what is called an elementary transformation of $E$ and in particular that $\Ker(\alpha)\cong Coker(\alpha)$. 
One gets exact sequences 
$$0\to \Ker(\alpha)\to E'_\C\to \im(\alpha)\to0,$$
$$0\to\im(\alpha)\to E_\C\to\coker(\alpha)\to0$$
of torsion-free sheaves on $X_\C$.
Langton calls the passage from $[M]$ to $[M']$ an {\it edge} and denotes it by $[M] \ - \ [M']$.

{\it Remark. 
For an edge as above $E_\C$ and $E'_\C$ have the same  Hilbert polynomial  and in case $E_\C$ and  $\im(\alpha)$ are semistable with reduced Hilbert polynomial $p$ then also $\Ker(\alpha)$ and $E'_\C$ will be  semistable with reduced Hilbert polynomial $p$ by Proposition \ref{prop:abelian_category} and $\mathrm{gr}^{JH}(E_\C)\cong\mathrm{gr}^{JH}(E'_\C)$.}

We consider now the $\cO_{X_A,\xi}$-modules $E_{1,\xi}$, $E_{2,\xi}$. 
We can find a basis $(e_1,...,e_r)$ of  $E_{1,\xi}$ over $\cO_{X_A,\xi}$ such that   $(\pi^{m_1}e_1, ...,\pi^{m_r}e_r)$ is a basis of $E_{2,\xi}$, where $m_1$, ..., $m_r$ are suitable integers. 
Up to replacing $E_2$ by $\pi^nE_2$ for some $n$ and up to permuting the $e_i$-s  we may suppose that $m_1=0$ and that $m_1\le m_2\le..\le m_r$. 
We  now construct a sequence of $m_r$ edges $[M] \ - \ [M']$, $[M'] \ - \ [M'']$, ..., $[M^{(m_{r}-1)}] \ - \ [M^{(m_{r})}]$ as above starting at $[M]=[E_{1,\xi}]$ and ending at $ [M^{(m_{r})}]=[E_{2,\xi}]$ in the following way. 
If $s$ is such that $m_s=m_1=0$ and $m_{s+1}>m_s$, we set $M':=(e_1,...,e_s,\pi e_{s+1},...,\pi e_r)$ to be the $\cO_{X_A,\xi}$-submodule of $(E_K)_\Xi$ generated by the elements $e_1,...,e_s,\pi e_{s+1},...,\pi e_r$. 
We next set $M'':=(e_1,...,e_s,\pi^2 e_{s+1},...,\pi^2 e_r)$, ..., $M^{(m_{s+1})}:=(e_1,...,e_s,\pi^{m_{s+1}} e_{s+1},...,\pi^{m_{s+1}} e_r)$ and continue with  $$M^{(m_{s+1}+1)}:=(e_1,...,e_s,\pi^{m_{s+1}} e_{s+1},...,\pi^{m_{s+1}} e_t, \pi^{m_{s+1}+1} e_{t+1},...,\pi^{m_{s+1}+1} e_r),$$ where $t$ is such that $m_t=m_{s+1}$, $m_{t+1}>m_t$ and so on until we reach $E_{2,\xi}$. 
For this sequence of edges we denote by $E=E_1$, $E'$, ...,$E^{(m_r)}=E_2$ the associated sheaf extensions to $X_A$ and by  $\alpha_1, ..., \alpha_{m_r}$ the induced morphisms. 
By construction we have $\im(\alpha_1)=\im(\alpha_1\circ\alpha_2\circ...\circ\alpha_{m_r})$. 
By Proposition \ref{prop:abelian_category} it follows that $\im(\alpha_1)$ is semistable with reduced Hilbert polynomial $p:=p_{E_{1,\C}}=p_{ E_{2,\C}}$ so by the above Remark also $E'_\C$ is semistable with reduced Hilbert polynomial $p$ and $\mathrm{gr}^{JH}(E_\C)\cong\mathrm{gr}^{JH}(E'_\C)$. 
Iterating  this piece of argument we thus  obtain 
\[\mathrm{gr}^{JH}(E_{1,\C})\cong\mathrm{gr}^{JH}(E'_\C)\cong...\cong\mathrm{gr}^{JH}(E^{(m_r-1)}_\C)\cong\mathrm{gr}^{JH}(E_{2,\C}). \qedhere\]
\end{proof}

\begin{rem}\label{rem:proof-of-separation}
The proof of Proposition \ref{prop:separation} works for slope semistability and for more general semistability notions as  in \cite[Definition 2.2]{TomaCriteria} if one replaces the Jordan-H\"older graduation in the category $\Coh(X)$ by the Jordan-H\"older graduation in an appropriate quotient category $\Coh_{d,d'}(X)$ as defined in \cite{HL}. The problem is that for such semistability notions  Jordan-H\"older graduations  exist in  $\Coh(X)$, but are no longer unique, \cite[Proposition 2.1]{BuTeTo1bis}.
\end{rem}

\begin{cor}[Separation]\label{cor:separation}
 The moduli space $M_{(X,\omega),\tau}^{ss}$
is separated.
\end{cor}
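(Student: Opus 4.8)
The plan is to verify the valuative criterion for separatedness of the algebraic space $M^{ss}$, feeding the Langton-type result of Proposition~\ref{prop:separation} into the good moduli space morphism $\pi\colon \cX\to M^{ss}$. Since $M^{ss}$ is of finite type over $\C$, it suffices to show: for every discrete valuation ring $A$ over $\C$ with fraction field $K$, any two morphisms $\phi_1,\phi_2\colon \Spec A\to M^{ss}$ that agree over $\Spec K$ already agree over $\Spec A$. As the statement is insensitive to faithfully flat base change on $A$ (morphisms into $M^{ss}$ form an fppf sheaf), I may freely replace $A$ by a finite extension of discrete valuation rings whenever convenient, and I would reduce to the case that the residue field is $\C$.

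Next I would realise the two $A$-points by flat families sharing a common generic fibre. Writing $\phi_K$ for the common restriction $\phi_1|_{\Spec K}=\phi_2|_{\Spec K}$, I first lift $\phi_K$ to a single object $g_K\in\cX(K')$ over a finite extension $K'$ of $K$; this is possible because $\pi$ is surjective and of finite type. The good moduli space morphism $\pi$ is universally closed, so the existence part of the valuative criterion lets me extend $g_K$, for $i=1,2$, to objects $\tilde g_i\in\cX(A')$ with $\pi\circ\tilde g_i=\phi_i$ and $\tilde g_i|_{\Spec K'}=g_K$, after passing to a suitable finite extension $A'$ of $A$. Unwinding, $\tilde g_1,\tilde g_2$ are two flat families $E_1,E_2$ of $\omega$-semistable sheaves of type $\tau$ on $X_{A'}$ whose generic fibres literally coincide, $i^*E_1=i^*E_2=E_{K'}$. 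Since semistable sheaves are torsion-free, each $E_i$ embeds into $i_*E_{K'}$ as a lattice, so $E_1,E_2\subset i_*E_{K'}$ is exactly the situation of Proposition~\ref{prop:separation}.

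Proposition~\ref{prop:separation} then yields $\mathrm{gr}^{JH}(j^*E_1)\cong\mathrm{gr}^{JH}(j^*E_2)$, i.e. the special fibres are $S$-equivalent. By the identification of points of $M^{ss}$ with $S$-equivalence classes (Propositions~\ref{prop:corepresentaion} and~\ref{prop:points}), the two special fibres determine the same point $m_0$ of $M^{ss}$; hence $\phi_1$ and $\phi_2$ also agree on the closed point of $\Spec A$. It remains to upgrade the coincidence of $\phi_1,\phi_2$ on both points of $\Spec A$ to an equality of morphisms. For this I would pass to an \'etale local quotient presentation around $m_0$ as in Proposition~\ref{prop:slice}: the associated good moduli space is an affine scheme $\Spec(A_0^{G_0})$ mapping \'etale to $M^{ss}$ and hitting $m_0$. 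After henselising $A$, both $\phi_1,\phi_2$ lift uniquely through this \'etale chart with the prescribed value at the closed point; because $\phi_1,\phi_2$ already agree over $K$, so do the two lifts over $K'$, and two $A'$-points of the affine scheme $\Spec(A_0^{G_0})$ that agree over its fraction field agree outright, $A'$ being a domain inside $K'$. Descending along the \'etale chart gives $\phi_1=\phi_2$, as required.

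The routine steps are the valuative criterion and the lifting via universal closedness of $\pi$. The main obstacle — the place where separatedness genuinely lives, rather than a mere matching of points — is the last step: a priori two $A$-points with the same image over $K$ and the same closed point need not coincide (this is exactly how non-separated spaces such as the line with a doubled origin arise). What rescues the argument is that Proposition~\ref{prop:separation} \emph{forces} the closed points to agree, after which both maps are confined to a single affine good-moduli chart, where $A'\hookrightarrow K'$ makes agreement over the fraction field imply agreement over $A'$. I would take care to check that the common generic lift $g_K$ really does produce families with equal generic fibres, so that Proposition~\ref{prop:separation} applies as stated, and that the reductions to residue field $\C$ and to the closed point $m_0$ are legitimate for testing separatedness of a finite-type algebraic space.
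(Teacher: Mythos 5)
Your overall strategy---reduce to the valuative criterion for separatedness, lift the two $A$-points of $M^{ss}$ to families $E_1,E_2\in\cX(A')$ with a common generic fibre via surjectivity and universal closedness of the good moduli space morphism, then feed these into Proposition~\ref{prop:separation} and Proposition~\ref{prop:points}---is exactly the deduction the paper leaves implicit, and your steps up to ``the closed points agree'' are sound modulo the standard reductions you mention. The problem is the last step. The assertion ``because $\phi_1,\phi_2$ already agree over $K$, so do the two lifts over $K'$'' is unjustified and false in general: the henselian lifting property pins down $\tilde\phi_1,\tilde\phi_2$ by their values at the \emph{closed} point, and two lifts of one and the same $K'$-point of $M^{ss}$ along an \'etale morphism $U\to M^{ss}$ may perfectly well be distinct points of $U(K')$. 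If this step were valid, the identical argument would show that any algebraic space in which ``agreement at the generic point plus agreement at the closed point'' holds is separated; the standard non-locally-separated space $M=\mathbb{A}^1/R$ with $R=\Delta\sqcup\{(x,-x):x\neq 0\}$ refutes this: $\phi_1(t)=t$ and $\phi_2(t)=-t$ agree over $K$ and at the origin, yet define distinct morphisms $\Spec\C[[t]]\to M$, and their henselian lifts to the atlas $\mathbb{A}^1$ are $t$ and $-t$, which do not agree over $K$.

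So the genuine content you correctly sensed in the final step is not supplied by the \'etale chart. What is missing is an argument that the \emph{generic} identification $E_1|_{K'}\cong E_2|_{K'}$ specialises compatibly, i.e.\ that the equaliser of $\phi_1,\phi_2$ (the pullback of the diagonal of $M^{ss}$, a priori only a quasi-compact monomorphism into $\Spec A'$) contains the closed point as a specialisation of the generic point, not merely as an isolated point---the subscheme $\Spec K'\sqcup\Spec k$ of agreement is exactly what must be excluded. Two ways to close this: (i) show that the diagonal of $M^{ss}$ is an immersion (local separatedness); then the equaliser is a locally closed subscheme of the reduced two-point scheme $\Spec A'$ containing both points, hence all of it, and your earlier steps finish the proof; or (ii) exploit the structure of the proof of Proposition~\ref{prop:separation} rather than only its conclusion: Langton's chain of elementary modifications connects $E_1$ to $E_2$ through families whose induced maps $\Spec A'\to M^{ss}$ can be shown to coincide (the mechanism later axiomatised as S-completeness). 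As written, the conclusion $\phi_1=\phi_2$ does not follow from what you have established.
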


\begin{Prop}[Properness]\label{prop:properness} The moduli space $M_{(X,\omega),\tau}^{ss}$
is proper.
\end{Prop}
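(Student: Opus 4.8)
The plan is to verify the valuative criterion of properness for the algebraic space $M_{(X,\omega),\tau}^{ss}$. Since $M_{(X,\omega),\tau}^{ss}$ is the good moduli space of the finite-type stack $\cX=[R/G]$ produced in Theorem~\ref{thm:existence}, it is of finite type over $\C$, and by Corollary~\ref{cor:separation} it is separated; hence properness reduces to universal closedness, which for a separated finite-type algebraic space over $\C$ may be tested on discrete valuation rings, with finite extensions of the fraction field allowed. Concretely, I would fix a discrete valuation ring $A$ over $\C$ with fraction field $K$ and a morphism $g\colon \Spec(K)\to M_{(X,\omega),\tau}^{ss}$, and aim to produce, after possibly replacing $A$ by a discrete valuation ring dominating it with a finite extension of fraction fields, an extension $\bar g\colon \Spec(A)\to M_{(X,\omega),\tau}^{ss}$.

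The first step is to realise the generic point as an actual family of sheaves. Since the good moduli space morphism $\pi\colon\cX\to M_{(X,\omega),\tau}^{ss}$ is surjective and, by Proposition~\ref{prop:points}, its closed points correspond to $S$-equivalence classes represented by polystable sheaves, after a finite extension $K'/K$ the point $g$ lifts to a $K'$-point of $\cX$, that is, to an $\omega$-semistable torsion-free sheaf $E_{K'}$ of type $\tau$ on $X_{K'}$. Replacing $A$ by the localisation at a height-one prime of its integral closure in $K'$, I would reduce to the situation of a discrete valuation ring $A$ with fraction field $K$ together with an $\omega$-semistable sheaf $E_K$ on $X_K$ whose associated point of the moduli space is the one to be extended.

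The heart of the argument is a semistable reduction theorem of Langton type \cite{Langton}. I would first extend $E_K$ to a torsion-free coherent sheaf $\cE$ on $X_A$ that is flat over $A$ and satisfies $i^*\cE\cong E_K$ — for instance by choosing any coherent extension, replacing it by the image of the natural map to $i_*E_K$, and dividing out $A$-torsion. If the special fibre $j^*\cE$ is already $\omega$-semistable we are done, since $\cE$ then defines an $A$-point of $\cX$ whose composition with $\pi$ extends $g$. Otherwise, following Langton's procedure, I would replace $\cE$ by the kernel $\cE'$ of the composite $\cE\twoheadrightarrow j^*\cE\twoheadrightarrow (j^*\cE)/B$, where $B\subset j^*\cE$ is the maximal $\omega$-destabilising subsheaf of the special fibre, and iterate. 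The remaining task is to show that this sequence of elementary modifications terminates, yielding after finitely many steps a flat extension whose special fibre is $\omega$-semistable; this extension then furnishes the required $A$-point of $\cX$, and composing with $\pi$ produces $\bar g$.

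The hard part will be precisely this termination, and it is where the K\"ahler polarisation causes the principal difficulty: in the classical setting termination rests on the integrality of Hilbert polynomials, which is lost for a non-rational class $\omega$, so the monotone invariant controlling Langton's argument now takes real values and no longer lies in an obviously discrete set. To overcome this I would combine two tools already established. First, the boundedness result of Proposition~\ref{boundedness} confines the Chern classes, and hence the reduced Hilbert polynomials $p^\omega$, of the maximal $\omega$-destabilising subsheaves $B$ occurring in the iteration to a discrete, finite set of possibilities, restoring the stabilisation that integrality provided in the rational case. Second, the fact that $\Coh^{ss}(X,\omega,p)$ is abelian, noetherian and artinian (Proposition~\ref{prop:abelian_category}) governs the Jordan--H\"older behaviour underlying Langton's comparison of successive destabilising subsheaves. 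With these inputs, Langton's monotonicity argument — that the invariant measuring the failure of semistability of the special fibre strictly decreases under each modification and cannot do so indefinitely — is expected to carry over, completing the verification of the valuative criterion and hence the proof of properness.
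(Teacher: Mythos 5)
Your overall architecture is the right one and is in fact what the paper relies on: the paper's proof of Proposition~\ref{prop:properness} consists of a single sentence invoking the Langton-type valuative criterion for properness established in \cite{TomaCriteria}. Your reduction of properness to semistable reduction --- lifting the generic point to $\cX$ after a finite extension of $K$, extending the family over $A$ by Langton's elementary modifications, and pushing down via the good moduli space morphism --- is correct and standard, and your identification of the termination of Langton's algorithm as the crux, where irrationality of $\omega$ destroys the integrality of Hilbert polynomials, is exactly right.

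The gap is in the termination step itself. Proposition~\ref{boundedness} applies to \emph{semistable} sheaves of \emph{fixed} rank and Chern classes with polarisation in a compact set; the maximal $\omega$-destabilising subsheaves occurring in the iteration have varying rank and Chern classes, and the special fibres being modified are by hypothesis not semistable, so that proposition gives no control over the set of reduced Hilbert polynomials of these subsheaves --- establishing that this set is discrete enough to force the monotone invariant to stabilise is precisely what has to be proved, and invoking Proposition~\ref{boundedness} here is close to circular. Likewise Proposition~\ref{prop:abelian_category} concerns the category of semistable sheaves with one fixed reduced Hilbert polynomial and says nothing about the chain of destabilising subsheaves of non-semistable sheaves. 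What is actually needed is the K\"ahler analogue of Grothendieck's lemma (\cite[Lemma 4.3]{TomaLimitareaI}, used elsewhere in the paper in the proof of openness of semistability) bounding the family of saturated subsheaves with slope bounded below, together with the finer analysis of Langton's descending chains carried out in \cite{TomaCriteria}. As written, your argument asserts the needed discreteness rather than deriving it; the paper avoids this by outsourcing the entire semistable reduction statement to \cite{TomaCriteria}.
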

\begin{proof}
This follows from the analogon of Langton's valuative criterion for properness proved in \cite{TomaCriteria}. 
\end{proof}

\def\cprime{$'$} \def\polhk#1{\setbox0=\hbox{#1}{\ooalign{\hidewidth
  \lower1.5ex\hbox{`}\hidewidth\crcr\unhbox0}}}
  \def\polhk#1{\setbox0=\hbox{#1}{\ooalign{\hidewidth
  \lower1.5ex\hbox{`}\hidewidth\crcr\unhbox0}}}
\providecommand{\bysame}{\leavevmode\hbox to3em{\hrulefill}\thinspace}
\providecommand{\MR}{\relax\ifhmode\unskip\space\fi MR }
\providecommand{\MRhref}[2]{%
  \href{http://www.ams.org/mathscinet-getitem?mr=#1}{#2}
}
\providecommand{\href}[2]{#2}

\medskip
\medskip
\begin{center}
\rule{0.4\textwidth}{0.4pt}
\end{center}
\medskip
\medskip
\end{document}